\documentclass[
a4paper,
12pt,
DIV=14,`1234
toc=bibliography,
headsepline,
parskip=half-,
abstract=true,
]{scrartcl}

\usepackage{amsmath,amsfonts,amsthm,amssymb,nicefrac}
\usepackage{graphicx,color,url}
\usepackage{xcolor}
\usepackage{accents}
\usepackage{url,hyperref}
\usepackage{enumerate}
\usepackage{comment}
\usepackage{scrlayer-scrpage}

\newtheorem{theorem}{Theorem}
\newtheorem{lemma}[theorem]{Lemma}
\newtheorem{corollary}[theorem]{Corollary}
\newtheorem{proposition}[theorem]{Proposition}
\theoremstyle{definition}

\numberwithin{equation}{section}\numberwithin{theorem}{section}
\allowdisplaybreaks
\newcounter{stepctr}
{\end{list}}

\def\XXint#1#2#3{{\setbox0=\hbox{$#1{#2#3}{\int}$}
 \vcenter{\hbox{$#2#3$}}\kern-.5\wd0}}


\newcommand{\circo}{\accentset{\circ}}
\newcommand{\ra}{\rangle}
\newcommand{\la}{\langle}
\newcommand{\e}{\varepsilon}


\providecommand{\titlemacro}{{High Codimension MCF in $\mathbb{C}P^n$}}
\title{\titlemacro}
\author{Artemis A. Vogiatzi}
\ohead{\titlemacro}
\ihead{A.A.Vogiatzi}
\pagestyle{scrheadings}
\setkomafont{pageheadfoot}{\footnotesize}

\date{}
\begin{document}
\title{Mean Curvature Flow of High Codimension in Complex Projective Space}
\maketitle
\begin{abstract}
We study the mean curvature flow of smooth $m$-dimensional compact submanifolds with quadratic pinching in the Riemannian manifold $\mathbb{C}P^n$. Our main focus is on the case of high codimension, $k\geq 2$. We establish a codimension estimate that shows in regions of high curvature, the submanifold becomes approximately codimension one in a quantifiable way. This estimate enables us to prove at a singular time of the flow, there exists a rescaling that converges to a smooth codimension-one limiting flow in Euclidean space. Under a cylindrical type pinching, we show that this limiting flow is weakly convex and moves by translation. These estimates allow us to analyse the behaviour of the flow near singularities and establish the existence of the limiting flow. Lastly, we prove a decay estimate that shows that the rescaling converges smoothly to a totally geodesic limit in infinite time. This behaviour is only possible if the dimension of the submanifold is even. Our approach relies on the preservation of the quadratic pinching condition along the flow and a gradient estimate that controls the mean curvature in regions of high curvature. This result  generalises the work of Pipoli and Sinestrari on the mean curvature flow of submanifolds of the complex projective space.
\end{abstract}
\section{Introduction}\label{sec_introduction}
Let $F_0\colon\mathcal{M}^m \rightarrow \mathbb{C}P^n$ be a smooth immersion of a compact manifold $\mathcal{M}^m$. The mean curvature flow starting from $F_0$ is the following one-parameter family of submanifolds
	\begin{align*}
	F\colon \mathcal{M}^m \times[0, T) \rightarrow \mathbb{C}P^n
	\end{align*}
such that
	\begin{align}\label{mean curvature flow}
	\begin{split}
\left\{
	\begin{array}{rl}
		\partial_t F(p, t) &=H(p, t), \ \ \text{for} \ \ p \in \mathcal{M}, t \in[0, T)   \\
		F(p, 0) &=F_0(p)
	\end{array}
	\right.
\end{split}
	\end{align}
where $H(p, t)$ denotes the mean curvature vector of $\mathcal{M}_t=F_t(p)=F(p, t)$ at $p$. It is well known this is a system of quasilinear weakly parabolic partial differential equations for $F$. Geometrically, the mean curvature flow is the steepest descent flow for the area functional of a submanifold and hence it is a natural curvature flow.

So far, a lot of work has been done on submanifolds evolving by their mean curvature over the last decades, especially in the case of codimension one. In the paper Huisken \cite{Hu84} proved that closed convex hypersurfaces under the mean curvature flow evolve into spherical singularities, using Stampacchia iteration and the Michael--Simons--Sobolev inequality. In \cite{Hu86}, Huisken then generalises this theorem to Riemannian background curvature spaces with strict convexity depending on the geometry of the background space. 

In \cite{AnBa10}, Andrews and Baker proved convergence to a round point for submanifolds of higher codimension of the Euclidean space under mean curvature flow, satisfying a suitable quadratic pinching condition. Liu, Xu, Ye and Zhao in \cite{LXYZ} have obtained similar results for submanifolds of hyperbolic spaces and of general Riemannian manifolds in \cite{Liu}. 

Recently, Nguyen and the author in \cite{HNAV} have proved a codimension estimate in high codimension mean curvature flow that  with the assumption of the quadratic pinching $|A|^2\le c_n|H|^2-d_n$, where $c_n\le\frac{4}{3n}$ and $d_n>0$ and $d_n$ depending only on the background curvature, singularity models for this pinched flow must always be codimension one, regardless of the original's flow codimension. Along with the cylindrical estimate, we've managed to classify blow up limits in the following way: if the singularity for $t \rightarrow T$ is of type I, the only possible limiting flows under the rescaling procedure as Huisken and Sinestrari in \cite{HS}, are the homothetically shrinking solutions associated with $\mathbb{S}^n, \mathbb{R} \times \mathbb{S}^{n-1}$. If the singularity is of type II, then from Theorem \ref{Th1}, the only possible blow-up limits at the first singular time are codimension one shrinking round spheres, shrinking round cylinders, and translating bowl solitons. This result classifies singularity models for submanifolds of high codimension of all Riemannian manifolds under this pinching condition. In the same paper, we've proved convergence to a codimension one limiting flow of submanifolds of higher codimension in the hyperbolic space under a suitable pinching condition.

When it comes to stationary limits of the mean curvature flow, very limited work has been done so far. In the case of weak solutions, White in \cite{White00} showed that mean convex solutions either disappear in finite time or converge to a finite collection of stable minimal submanifolds, while Grayson in \cite{Grayson89} showed that an embedded curve in a Riemannian surface either shrinks to a round point or converges smoothly to a geodesic. Huisken in \cite{Hu87} and Baker in \cite{BakerThesis} made use of the special structure of the sphere in an essential way. The positive curvature allows two possible behaviours of the flow, one of which is not possible in the case of negative curvature. Assuming a pinching condition of the form $|A|^2\le c|H|^2+d$, where $d\ge 0$, they showed that in the case of infinite maximal time, the flow converges to a smooth totally geodesic hypersurface, which is impossible in the case where the pinching condition is of the form $|A|^2\le c|H|^2-d$, where $d\ge 0$. In their case, higher dimensional analogue of the results of Grayson in \cite{Grayson89} has only been obtained for submanifolds of the sphere,  by Huisken \cite{Hu87} for codimension one and by Baker \cite{BakerThesis}. The results of both can be stated together as follows.
\begin{theorem}[\cite{Hu87}, \cite{AnBa10}]
Let $\mathcal{M}_0$ be a closed $n$ dimensional submanifold of $\mathbb{S}^{n+k}$, with $n \geq 2$, and suppose that we have on $\mathcal{M}_0$
	\begin{align*}
& |A|^2<\frac{1}{n-1}|H|^2+2, \quad \text { if } n \geq 4 \text {, or } n=3 \text { and } k=1, \\
& |A|^2<\frac{3}{4}|H|^2+\frac{4}{3}, \quad \text { if } n=2 \text { and } k=1 \text {, } \\
& |A|^2<\frac{4}{3 n}|H|^2+\frac{2(n-1)}{3}, \quad \text { if } n=2,3 \text { and } k>1.
	\end{align*}
Then one of the following holds:
\begin{enumerate}
\item $T_{\max }$ is finite and the $\mathcal{M}_t$ 's converge to a round point as $t \rightarrow T_{\max }$,
\item $T_{\max }$ is infinite and the $\mathcal{M}_t$ 's converge to a smooth totally geodesic hypersurface $\mathcal{M}_{\infty}$, isometric to $\mathbb{S}^n$.
\end{enumerate}
\end{theorem}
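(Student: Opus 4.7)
My plan is to split the proof along the dichotomy on $T_{\max}$, based on two preparatory facts. First, the quadratic pinching condition is preserved under the flow in $\mathbb{S}^{n+k}$: applying the tensor maximum principle to $|A|^2 - c|H|^2 - d$ (or its higher-codimension analogue in the style of Andrews--Baker) gives preservation, with the ambient curvature of $\mathbb{S}^{n+k}$ contributing an extra reaction term that is absorbed exactly by the additive constant $d$ in each inequality; this explains the specific values $2$, $\tfrac{4}{3}$, and $\tfrac{2(n-1)}{3}$. Second, a Huisken-type gradient estimate $|\nabla A|^2 \leq \e |A|^4 + C(\e)$ transfers to the spherical ambient with correction terms absorbed as lower order.

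For $T_{\max} < \infty$, parabolic rescaling at a sequence of points where $|H| \to \infty$ produces a limit flow in Euclidean space satisfying the pure pinching $|A|^2 \leq c|H|^2$, the constant $d$ having scaled away. By Theorem~\ref{Th1} of the introduction, the possible singularity models are the shrinking round sphere, the shrinking cylinder, and the translating bowl soliton; the strict form of the pinching together with the gradient estimate exclude the last two, leaving only the round sphere. A covering/diameter argument then upgrades this local blow-up statement to global convergence of $\mathcal{M}_t$ to a round point, following Huisken--Sinestrari.

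For $T_{\max} = \infty$, the target is smooth convergence to an equatorial great $\mathbb{S}^n \subset \mathbb{S}^{n+k}$. Following Huisken \cite{Hu87}, the strategy is to establish integral decay of $\int |A|^2\, d\mu$ and $\int |H|^2\, d\mu$ through a carefully chosen test-function argument on the evolution equations, in which the positive ambient reaction is outweighed by the pinching-controlled negative reaction together with the volume decay $\partial_t d\mu = -|H|^2 d\mu$. Stampacchia iteration then converts integral decay into pointwise decay $|A|, |H| \to 0$, and Shi-type higher-derivative estimates yield smooth subsequential convergence to a limit with $A \equiv 0$, which must be a totally geodesic $\mathbb{S}^n$; a standard parabolic argument promotes subsequential to full convergence.

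The main obstacle is precisely this integral decay in the infinite-time case: the ambient positive curvature of $\mathbb{S}^{n+k}$ contributes a source term to $\partial_t |H|^2$ and $\partial_t |A|^2$, so decay is not automatic and must be extracted from the interplay between the pinching (controlling $|A|^2$ in terms of $|H|^2$), the volume decrease, and the choice of a higher $L^p$ norm in which the reaction closes. In higher codimension one additionally has to handle the normal-bundle cross terms in the Simons identity, which is exactly where the stronger pinching constants in the $n=2,3$, $k>1$ case of the statement come in.
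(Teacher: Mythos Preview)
This theorem is not proved in the paper. It is stated in the introduction as a known result from the literature, with attribution to Huisken \cite{Hu87} and Andrews--Baker \cite{AnBa10} (via Baker's thesis \cite{BakerThesis}), and serves only as motivation and context for the paper's own results on $\mathbb{C}P^n$. There is therefore no ``paper's own proof'' to compare your proposal against.

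That said, your sketch departs significantly from the actual proofs in \cite{Hu87} and \cite{BakerThesis}, and contains a genuine gap in the finite-time case. You propose to handle $T_{\max}<\infty$ by blow-up and singularity classification, invoking Theorem~\ref{Th1} to reduce to spheres, cylinders, and bowl solitons, and then excluding the latter two. This is both anachronistic and circular: Theorem~\ref{Th1} is a result of \emph{this} paper, proved for $\mathbb{C}P^n$, and in any case the singularity-classification machinery you appeal to postdates \cite{Hu87} and \cite{AnBa10} by decades. More importantly, the exclusion step you gesture at (``the strict form of the pinching together with the gradient estimate exclude the last two'') is exactly the hard part and is not supplied. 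The original arguments do not go via blow-up at all: they prove a sharp pinching-improvement estimate $|\mathring{A}|^2 \le C|H|^{2-\sigma}$ via Stampacchia iteration against the Michael--Simons--Sobolev inequality, and then run a normalized-flow convergence argument \`a la \cite{Hu84}. Your sketch omits this entire mechanism.

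For $T_{\max}=\infty$ your outline is closer in spirit, though still not quite what is done. In \cite{Hu87} and \cite{BakerThesis} the infinite-time case is handled more directly: one first observes that $|A|^2$ must remain uniformly bounded (else the finite-time analysis would apply), and then the evolution equation for $|\mathring{A}|^2$ together with the preserved pinching yields exponential decay of $|\mathring{A}|^2$ by the maximum principle, without needing an integral-to-pointwise bootstrap. The positive ambient curvature helps rather than hinders here, contributing a good-sign reaction term proportional to $-|\mathring{A}|^2$.
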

Langford, Lynch and Nguyen in \cite{LaLyNg} developed these results further by allowing a weaker curvature pinching condition.
While the unique properties of the sphere are pivotal in \cite{Hu87}, \cite{BakerThesis}, and \cite{LaLyNg} apply for more general ambient spaces of positive curvature. Pipoli and Sinetrari in \cite{PipSin} considered the complex projective space and showed that suitably pinched submanifolds evolving by mean curvature flow exhibit similar properties to the ones of the sphere. More specifically, they proved the following.
\begin{theorem}[\cite{PipSin}] Let $\mathcal{M}_0$ be a closed submanifold of $\mathbb{C P}^n$ of real dimension $m$ and codimension $k=2 n-m$. Suppose either $n \geq 3$ and $k=1$, or $n \geq 7$ and $2 \leq k<\frac{2 n-3}{5}$ (equivalently, $2 \leq k<\frac{m-3}{4}$ ). If at every point of $\mathcal{M}_0$ the inequality
	\begin{align*}
|A|^2< \begin{cases}\frac{1}{m-1}|H|^2+2 & \text { if } k=1, \\ \frac{1}{m-1}|H|^2+\frac{m-3-4 k}{m} & \text { if } k \geq 2,\end{cases}
	\end{align*}
is satisfied, then the same holds on $\mathcal{M}_t$ for all $0<t<T_{\max }$. Moreover, one of the two following properties holds:
\begin{enumerate}
\item $T_{\max }<\infty$, and $\mathcal{M}_t$ contracts to a point as $t \rightarrow T_{\max }$,
\item $T_{\max }=\infty$, and $\mathcal{M}_t$ converges to a smooth totally geodesic submanifold as $t \rightarrow T_{\max }$.
\end{enumerate}
Case 2) can only occur if $m$ is even, and the limit submanifold is isometric to $\mathbb{C}P^{\frac{m}{2}}$.
	\end{theorem}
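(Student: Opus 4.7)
The argument should follow Huisken's sphere paper \cite{Hu87} and the higher-codimension treatment \cite{BakerThesis}, adapted to the non-constant, Kähler geometry of $\mathbb{C}P^n$. I would split the proof into three steps: (1) preservation of the quadratic pinching along \eqref{mean curvature flow}, (2) a dichotomy based on whether $\max|H|$ blows up in finite time, and (3) identification of the infinite-time limit.

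For step (1), the starting point is the derivation of the evolution equations for $|A|^2$ and $|H|^2$ in $\mathbb{C}P^n$. Writing the Fubini--Study curvature tensor as a sum of its constant-sectional-curvature piece and a Kähler correction involving the complex structure $J$, the pinching function $\varphi = |A|^2 - \frac{1}{m-1}|H|^2 - c_{m,k}$ with $c_{m,k} = \frac{m-3-4k}{m}$ would satisfy a parabolic inequality of the form
\begin{align*}
\partial_t \varphi \le \Delta \varphi + \la b, \nabla \varphi \ra + \mathcal{Q} + \mathcal{K},
\end{align*}
where $\mathcal{Q}$ collects the Simons-type quadratic terms in $A$ and $\mathcal{K}$ collects the ambient curvature terms. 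The Andrews--Baker algebraic estimate controls $\mathcal{Q}$ from above on $\{\varphi=0\}$; the $J$-dependent contributions in $\mathcal{K}$, of the form $\la JH, H\ra$ and $\la JA, A\ra$ (which vanish when $\mathcal{M}_t$ is complex-tangent-invariant and are bounded by $|H|^2, |A|^2$ in general), are absorbed precisely by the choice of $c_{m,k}$. The parabolic maximum principle then yields $\varphi \le 0$ throughout the flow.

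For step (2), if $T_{\max}$ is finite then $\sup|A|^2\to\infty$, and by the classification of blow-up limits (the codimension and cylindrical estimates cited earlier in the paper) the rescaled flow converges to a codimension-one shrinker; pinching then forces it to be the round sphere, yielding case (1). If $T_{\max}=\infty$, I would perform Stampacchia iteration on the positive part of $|A|^2 - \frac{1}{m-1}|H|^2 - \sigma c_{m,k}$, $\sigma\in(0,1)$, combined with the Michael--Simons inequality, to obtain $L^p$ and then $L^\infty$ decay of the pinched quantity. A mean curvature integral estimate in the spirit of \cite{Hu87} yields exponential decay of $|H|$, and bootstrapping via Shi-type estimates upgrades this to smooth convergence to a totally geodesic limit $\mathcal{M}_\infty$.

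For the final identification, since the totally geodesic submanifolds of $\mathbb{C}P^n$ of real dimension $m$ are, up to isometry, $\mathbb{R}P^m$ and, when $m$ is even, $\mathbb{C}P^{m/2}$, one must rule out $\mathbb{R}P^m$: I would argue that the $J$-structure inherited by $\mathcal{M}_\infty$ as a smooth limit preserving the pinching constraint is incompatible with the totally real character of $\mathbb{R}P^m$, so only $\mathbb{C}P^{m/2}$ remains, forcing $m$ to be even. The main obstacle I expect is step (1): unlike the sphere case where the ambient curvature contributes only scalar terms, the Kähler corrections in $\mathbb{C}P^n$ couple $A$ and $H$ through $J$, and obtaining the exact constant $\frac{m-3-4k}{m}$ requires delicate bookkeeping combining the Andrews--Baker estimate on $\mathcal{Q}$ with the sharp Kähler bounds on $\mathcal{K}$ under the codimension restriction $2\le k < \frac{m-3}{4}$.
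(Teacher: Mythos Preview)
This theorem is cited from \cite{PipSin} and is not proved in the present paper; the paper instead proves a generalisation (Theorem~\ref{maintheorem}) with arbitrary $\tilde{k}\ge 1$. Your outline is broadly reasonable as a sketch of the original Pipoli--Sinestrari argument, but it diverges from how \emph{this} paper handles the corresponding steps, and your Step~(3) contains a genuine gap.

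\textbf{Step (1).} Your plan is correct in spirit. In the paper (Section~3) the Kähler curvature contributions are not treated as abstract ``$\la JH,H\ra$, $\la JA,A\ra$'' terms; rather, a specific orthonormal frame of type~(B2), adapted to $J$, is used to compute explicitly the three pieces $I,II,III$ of $P_\alpha$, yielding $P_\alpha\le -2(m-3-4k)|\mathring{A}|^2$. This is combined with the Andrews--Baker estimate on $2R_1-2c_mR_2$ and the Kato-type inequality of Lemma~\ref{katoinequality} to close the maximum-principle argument. The exact constant $\frac{m-3-4k}{m}$ falls out of this computation rather than being ``absorbed'' a posteriori.

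\textbf{Step (2), infinite time.} Your proposal (Stampacchia iteration + Michael--Simons) is precisely what \cite{PipSin} does, but the present paper explicitly avoids iteration. Section~8 instead proves directly, via the maximum principle applied to the ratio $q/Q$ with $q=\tfrac12|\mathring{A}|^2$ and $Q=-\tfrac12 f$, that $|\mathring{A}|^2/f\le \mathcal{C}e^{-2d_m t}$. This is simpler and is one of the paper's stated methodological points.

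\textbf{Step (2), finite time.} Note that \cite{PipSin} proves ``contracts to a point'' via Huisken-type iteration, not via blow-up analysis. The blow-up/codimension-estimate route you describe is what \emph{this} paper uses for its generalisation, and it yields only ``codimension-one limiting flow'', not contraction to a point.

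\textbf{Step (3).} Your argument for ruling out $\mathbb{R}P^m$ is incorrect. The totally real $\mathbb{R}P^m\subset\mathbb{C}P^n$ has real dimension $m=n$ and hence codimension $k=n$, which already violates the hypothesis $k<\frac{m-3}{4}$. No $J$-structure argument on the limit is needed; the paper simply invokes the classification of totally geodesic submanifolds of $\mathbb{C}P^n$ (Theorem~3.25 in \cite{Besse}) and observes that under the small-codimension hypothesis the only possibility is $\mathbb{C}P^{m/2}$, forcing $m$ even.
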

In this paper, we follow Nguyen and the author in \cite{HNAV} using a specific example of a Riemannian manifold as a background space, the complex projective space, since we can derive a different behaviour of the flow in infinite time.  More specifically, we will prove the result of Pipoli and Sinestrari in \cite{PipSin} considering general constants in the quadratic pinching condition, we will avoid using iteration procedures and using the result from Nguyen and the author in \cite{HNAV}, we will derive our main theorem, stated below.
\begin{theorem}\label{maintheorem}
Let $\mathcal{M}_0$ be a closed submanifold of $\mathbb{C}P^n$ of real dimension $m$ and codimension $k = 2n - m$. Let $\tilde{k}\ge 1$. Suppose either $m\ge\frac{18+16\tilde{k}}{7}$ and $k=1$, or $m>\frac{43\tilde{k}+18+\sqrt{1849\tilde{k}^2+3060\tilde{k}+324}}{14}$ and $2 \leq k<\frac{m-3}{4}$. If at every point of $\mathcal{M}_0$ the inequality
	\begin{align}\label{pinching_condition}
|A|^2< \begin{cases}\frac{1}{m-\tilde{k}}|H|^2+2\tilde{k} & \text { if } k=1, \\ \frac{1}{m-\tilde{k}}|H|^2+\frac{\tilde{k}(m-3-4 k)}{m} & \text { if } k \geq 2,\end{cases}
	\end{align}
is satisfied, then the same holds on $\mathcal{M}_t$ for all $0 < t < T$. Moreover, one of the two
following properties holds:
	\begin{enumerate}
\item $T< \infty$, and $\mathcal{M}_t$ contracts to a codimension one limiting flow as $t \to T$,
\item $T = \infty$, and $\mathcal{M}_t$ converges to a smooth totally geodesic submanifold as $t \to T$.
	\end{enumerate}
Case 2) can only occur if $m$ is even, and the limit submanifold is isometric to $\mathbb{C}P^{\frac{m}{2}}$.
	\end{theorem}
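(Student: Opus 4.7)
The plan is to follow the general strategy of Huisken--Sinestrari in the singular regime combined with the codimension estimate of \cite{HNAV}, and to adapt the infinite-time analysis of \cite{PipSin} to the relaxed pinching parameters $\tilde{k}$. My first step is preservation of \eqref{pinching_condition}. Defining $Q := |A|^2 - \tfrac{1}{m-\tilde{k}}|H|^2 - c$ with $c$ the corresponding additive constant, I would compute $(\partial_t - \Delta)Q$ via the Simons-type identity in the curved background, picking up the standard Euclidean reaction terms $R_1 - \tfrac{1}{m-\tilde{k}}R_2$ together with additional contributions driven by the Gauss and Codazzi equations in $\mathbb{C}P^n$. Since the holomorphic sectional curvature is $4$ and the real sectional curvatures lie in $[1,4]$, these background terms produce bounded corrections absorbed into the constant $c$. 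The bad gradient terms are controlled by a Kato-type inequality $|\nabla H|^2 \leq \tfrac{m+2}{3}|\nabla A|^2$ (or its refined variant suited to the constant $\tfrac{1}{m-\tilde{k}}$), and the dimensional hypotheses $m \geq \tfrac{18+16\tilde{k}}{7}$ and $m > \tfrac{43\tilde{k}+18+\sqrt{1849\tilde{k}^2+3060\tilde{k}+324}}{14}$ are precisely those that force the leading reaction to have the correct sign on $\{Q=0\}$, closing the maximum-principle argument.

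For the finite-time alternative (1), I would invoke the gradient estimate and codimension estimate of \cite{HNAV}. After rearrangement, \eqref{pinching_condition} yields $|A|^2 \leq \tfrac{1}{m-\tilde{k}}|H|^2 + d$, which in high-curvature regions implies a quadratic pinching $|A|^2 \leq c_m|H|^2 - d_m$ of the type required by \cite{HNAV}. Applying the Huisken--Sinestrari rescaling at the first singular time and extracting a blow-up limit, the codimension estimate forces the limiting flow to be codimension one in Euclidean space, while the cylindrical estimate yields weak convexity and, in the type II regime, a translating bowl soliton. This gives the codimension-one limiting flow asserted in (1), replacing the ``round point'' conclusion of \cite{PipSin} with the more general singularity description of Theorem~\ref{Th1}.

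In the infinite-time alternative (2), suppose $T = \infty$. I would first establish a decay estimate of the form $|\circo{A}|^2/|H|^{2(1-\sigma)} \to 0$ together with uniform $C^k$-bounds on $A$, by controlling the evolution of $f_\sigma := (|A|^2 - \tfrac{1}{m-\tilde{k}}|H|^2)|H|^{-2(1-\sigma)}$ and running an $L^p$ / Stampacchia-type estimate adapted to the curved background (mirroring the iteration of \cite{PipSin} but with the wider parameters). The positive lower sectional curvature bound of $\mathbb{C}P^n$ then forces $|H| \to 0$ as $t\to\infty$, so every smooth subsequential limit $\mathcal{M}_\infty$ is totally geodesic in $\mathbb{C}P^n$. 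Invoking the classification of totally geodesic submanifolds of $\mathbb{C}P^n$ (the complex subspaces $\mathbb{C}P^j$ of even real dimension $2j$ and the totally real forms), the pinching \eqref{pinching_condition} rules out the totally real case and forces $m$ to be even, yielding $\mathcal{M}_\infty \cong \mathbb{C}P^{m/2}$.

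The main obstacle is the decay step in (2). Since $\mathbb{C}P^n$ is not of constant sectional curvature, the evolution of $f_\sigma$ acquires terms arising from the Kähler structure that are not sign-definite and that interact unfavourably with the enlarged constant $\tilde{k}$. Tracking these contributions carefully, and verifying that the dimensional hypothesis $m > \tfrac{43\tilde{k}+18+\sqrt{1849\tilde{k}^2+3060\tilde{k}+324}}{14}$ is exactly what is needed to preserve the Stampacchia iteration of \cite{PipSin} under the relaxed pinching, is the technically delicate part of the argument; the exponent $\sigma$ will need to be chosen explicitly in terms of $\tilde{k}$, $m$, and $k$ to make the negative gradient term in the evolution of $f_\sigma$ dominate all the Kähler-background corrections.
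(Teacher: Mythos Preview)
Your plan for preservation of the pinching and for the finite-time alternative (1) matches the paper's approach: the evolution of $Q$, the Kato-type inequality with constant $\tfrac{16}{9(m+2)}$, the decomposition $P_\alpha = I + II + III$ of the $\mathbb{C}P^n$ background terms using frames adapted to the complex structure, and then the gradient, codimension and cylindrical estimates of \cite{HNAV} via blow-up. The dimensional hypotheses arise exactly as you say, from making the gradient terms non-positive (Lemma~\ref{katoinequality}) and from the coefficient comparison in the codimension estimate (Lemma~\ref{74.9}).

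For the infinite-time alternative (2), however, your route diverges from the paper. You propose to run a Stampacchia/$L^p$ iteration on a Huisken-type quantity $f_\sigma = |\mathring{A}|^2 |H|^{-2(1-\sigma)}$ and track the K\"ahler corrections through the iteration; you correctly flag this as the delicate step. The paper avoids iteration entirely. Instead it applies the maximum principle directly to the ratio $q/(-Q) = |\mathring{A}|^2/f$, where $f = c_m|H|^2 - |A|^2 + d_m > 0$ is the preserved pinching quantity. A careful splitting of the reaction terms in the evolution of $q$ and $Q$ (using $|\mathring{A}|^2 = 2q$ and $\tfrac{1}{m}|H|^2 = \tfrac{1}{mc_m-1}(|\mathring{A}|^2 - 2Q - d_m)$) shows that
\[
\frac{(\partial_t-\Delta)(q/Q)}{q/Q} \le -2d_m + 2\big\langle \nabla\log(q/Q),\nabla\log Q\big\rangle,
\]
so the strong maximum principle gives exponential decay $|\mathring{A}|^2/f \le \mathcal{C} e^{-2d_m t}$ with no integral estimates at all. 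This decay forces the solution, after a finite time $\tau d_m^{-1}$, into the narrower pinching $|A|^2 < \tfrac{1}{m-1}|H|^2 + \tfrac{m-3-4k}{m}$ of Pipoli--Sinestrari, and one then quotes their Theorem~1.1 verbatim for the convergence to $\mathbb{C}P^{m/2}$. Your iteration approach should also succeed in principle, but the paper's argument is both shorter and sidesteps the very obstacle you identify: the K\"ahler background terms are absorbed once into the constant $d_m$ at the level of $Q$ and $q$, rather than being carried through an $L^p$ scheme with an extra exponent $\sigma$ to tune. Note also that your test quantity $f_\sigma$ has $|H|$ in the denominator, which is awkward since the pinching \eqref{pinching_condition} does not rule out $H=0$; the paper's choice $|\mathring{A}|^2/f$ is globally well-defined because $f>0$ is preserved.
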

We call an inequality of the form \eqref{pinching_condition} above a pinching condition on the second fundamental form.

The above statement suggests that in odd dimensions a submanifold satisfying our assumptions converges to a codimension one solution under the mean curvature flow. We demonstrate that the behavior described in 2) is the only alternative to a codimension one limiting flow. However, such behaviour is ruled out in odd dimensions due to the fact that the only totally geodesic submanifolds of $\mathbb{C P}^n$ with small codimension like in the hypothesis of the theorem above, are isometric to a complex projective space. Full classification on totally goedesic submanifolds of $\mathbb{C}P^n$ can be found in Theorem 3.25 in \cite{Besse}. Moreover, Proposition 4.4 in \cite{Pip} is a special case of the above theorem for codimension $k=1$ and $\tilde{k}=3$, under the extra assumption of $\mathbb{C}P^1$-invariance.

The outline of the paper is as follows. In section 2, we give all the technical tools needed for our work and set up our notation. In section 3, we give the proof for the preservation of the quadratic pinching condition along the mean curvature flow. In section 4, we prove some gradient estimates needed for the blow up procedure. The importance of the gradient estimate is that it allows us to control the mean curvature and hence the full second fundamental form on a neighbourhood of fixed size. In section 5, we prove the codimension estimate, in the case where $H\neq0$. This means that in regions of high curvature, the submanifold becomes codimension one quantitatively. In section 6, we prove the cylindrical estimate, which implies that the submanifold is actually cylinder-like. In section 7, we show how the codimension estimate in Riemannian manifolds actually falls into the Euclidean case and, finally, in section 8, we prove  that the submanifold converges smoothly to a totally geodesic limit in infinite time.

\textbf{Acknowledgements} This result will form part of the author's doctoral thesis at Queen Mary University of London. She wishes to thank her supervisor, Huy The Nguyen, for all the helpful discussions and encouragement during the writing of this paper.
\section{Preliminaries}
This chapter presents the necessary preliminary results and establishes our notation. We primarily follow \cite{PipSin} with our own notation. We derive evolution equations for the length and squared length of the second fundamental form, as well as for the mean curvature vectors, in an arbitrary Riemannian background space of any codimension. Additionally, we provide a Kato-type inequality, which we will utilise throughout this paper.

The manifold $\mathbb{C} P^n$ is a Kähler manifold with complex dimension $n$, equipped with the Fubini-Study metric $g_{F S}$ and with complex structure $J$. The $\mathbb{C} P^n$ can be viewed as a real Riemannian manifold of dimension $2 n$. The curvature tensor and the Levi-Civita connection of $\left(\mathbb{C} P^n, g_{F S}\right)$ are denoted by $\bar{R}$ and $\bar{\nabla}$, respectively. The explicit form of $\bar{R}$ for any tangent vector fields $X, Y$, $Z, W$ is given by:
	\begin{align}\label{RinCPn}
\bar{R}(X, Y, Z, W)= & g_{F S}(X, Z) g_{F S}(Y, W)-g_{F S}(X, W) g_{F S}(Y, Z)\nonumber \\
& +g_{F S}(X, J Z) g_{F S}(Y, J W)-g_{F S}(X, J W) g_{F S}(Y, J Z) \nonumber\\
& +2 g_{F S}(X, J Y) g_{F S}(Z, J W).
	\end{align}
The sectional curvature $\bar{K}(X, Y)$ of a tangent plane spanned by two orthonormal vector fields $X$ and $Y$ is given by:
	\begin{align*}
\bar{K}(X, Y)=1+3 g_{F S}(X, J Y)^2,
	\end{align*}
which implies $1 \leq \bar{K} \leq 4$. The values $\bar{K}=1$ and $\bar{K}=4$ are if and only if $J Y$ is orthogonal (resp. tangent) to $X$. Moreover, $\left(\mathbb{C} P^n, g_{F S}\right)$ is a symmetric space, satisfying $\bar{\nabla} \bar{R}=0$, and it is also an Einstein manifold with Einstein constant $2(n+1)$.

Now, consider a closed submanifold $\mathcal{M}$ of $\mathbb{C} P^n$ with the induced metric $g$, curvature tensor $R$, and connection $\nabla$. At any point $p$, the tangent space to $\mathcal{M}$ is denoted by $T_p \mathcal{M}$, and the normal space is denoted by $N_p \mathcal{M}$. We use $m$ to represent the dimension of $\mathcal{M}$ and $k=$ $2 n-m$ to represent its codimension. Let $e_1, \ldots, e_{m+k}$ be an orthonormal frame tangent to $\mathbb{C}P^n$ at a point of $\mathcal{M}$, such that the first $m$ vectors are tangent to $\mathcal{M}$ and the remaining vectors are normal to $M$.

Let $F\colon \mathcal M^m\times [0,T)\to\mathbb{C}P^{n}$ be an $m$-dimensional smooth, closed and connected submanifold in the $(m+k)$-dimensional complex projective space $\mathbb{C}P^n$. We adopt the following convention for indices:
	\begin{align*}
	1\le i,j,k,\ldots\le m, \ 1\le a,b,c,\ldots\le m+k \ \ \text{ and} \ \ 1\le \alpha,\beta,\gamma,\ldots \le k.
	\end{align*}
We denote by $A$ the normal vector valued second fundamental form tensor and denote by $H$ the mean curvature vector which is the trace of the second fundamental form, given by $H^\alpha=\sum_i A^\alpha_{ii}$.

We will use two kinds of frames in this paper. The one which was also considered in \cite{AnBa10} and \cite{Liu}, can be defined at any point where $H \neq 0$ in the following way. We choose a local orthonormal frame $\{e_\alpha, m+1\le\alpha\le m+k\}$ for the normal bundle $N_p \mathcal{M}_t$, such that
	\begin{align*}
	\nu_1:=e_{m+1}=\frac{H}{|H|}
	\end{align*} 
and  any orthonormal basis $\left\{e_1, \ldots, e_m\right\}$ of $T_p \mathcal{M}_t$. Any tangent frame obtained in this way will be called of kind (B1).

We denote by $A^-$ the second fundamental form tensor orthogonal to the principal direction and $h$ to be the tensor valued second fundamental form in the principal direction, that is $h_{ij}=\frac{\langle A_{ij},H\rangle}{|H|}$. Therefore, we have $A=A^-+h\nu_1$. Also, $A^+_{ij}=\langle A_{ij},\nu_1\rangle\nu_1$.

Choose a local orthonormal frame field $\left\{e_a\right\}$ in $\mathbb{C}P^n$ such that $\{e_i\}$ are tangent to $\mathcal{M}$. Let $\left\{\omega_a\right\}$ be the dual frame field of $\left\{e_a\right\}$. The metric $g$ and the volume form $d \mu$ of $\mathcal{M}$ are $g=\sum_i \omega_i \otimes \omega_i$ and $d \mu=\omega_1 \wedge \ldots \wedge \omega_n$.

For any $x \in \mathcal{M}$, denote by $N_x\mathcal{M}$ the normal space of $\mathcal{M}$ at point $x$, which is the orthogonal complement of $T_x \mathcal{M}$ in $F^* T_{F(x)} \mathbb{C}P^n$. Here we identify $T_x \mathcal{M}$ with its image under the map $F_*$. The induced connection $\nabla$ on $\mathcal{M}$ is defined by
	\begin{align*}
	\nabla_X Y=(\bar{\nabla}_X Y)^{\top}
	\end{align*}
for $X, Y$ tangent to $\mathcal{M}$, where ()$^{\top}$ denotes the tangential component. Let $R$ be the Riemannian curvature tensor of $\mathcal{M}$. Given a normal vector field $\xi$ along $\mathcal{M}$, the induced connection $\nabla^{\perp}$ on the normal bundle is defined by
	\begin{align*}
	\nabla_X^{\perp} \xi=\left(\bar{\nabla}_X \xi\right)^{\perp}
	\end{align*}
where ()$^{\perp}$ denotes the normal component. Let $R^{\perp}$ denote the normal curvature tensor. The second fundamental form is defined to be
	\begin{align*}
	A(X, Y)=(\bar{\nabla}_X Y)^{\perp}
	\end{align*}
as a section of the tensor bundle $T^* \mathcal{M} \otimes T^* \mathcal{M} \otimes N \mathcal{M}$, where $T^* \mathcal{M}$ and $N \mathcal{M}$ are the cotangential bundle and the normal bundle on $\mathcal{M}$. 

Given a connection $\nabla$ on $A$, from the definition of $A^-$, it is natural to define the connection $\hat{\nabla}^\bot$ acting on $A^-$, by
	\begin{align*}
	\hat{\nabla}^\bot_i A^-_{jk}:=\nabla^\bot_i A^-_{jk}-\langle \nabla^\bot_i A^-_{jk},\nu_1\rangle\nu_1.
	\end{align*}
We denote $\mathring{h}$ to be the traceless part of the second fundamental form in the principal direction. The traceless second fundamental form can be rewritten as $\mathring{A}=\sum_{\alpha} \mathring{A}^\alpha \nu_{\alpha}$, where
	\begin{align*}
	\begin{cases} H^+=\operatorname{tr} A^+=|H|,\ \ \ \ \alpha= m+1 &\\ H^\alpha=\operatorname{tr} A^\alpha=0, \ \ \ \ \ \ \ \alpha \geq m+2\end{cases}
	\end{align*}
and
	\begin{align*}
	\begin{cases}\mathring{A}^+=A^+-\frac{|H|}{m} Id, \ \ \ \ \alpha= m+1 &\\ \mathring{A}^-=A^\alpha,\ \ \ \ \ \ \ \ \ \ \ \ \ \ \ \alpha \geq m+2.\end{cases}
	\end{align*}
When using a basis of type (B1), we use the following notation:
	\begin{align*}
	\left|h\right|^2:=|A^+|^2, \quad|\mathring{h}|^2:=|\mathring{A}^+|^2, \quad |A^-|^2=|\mathring{A}^-|^2:=\sum_{\alpha=m+2}|\mathring{A}^{\alpha}|^2.
	\end{align*}
We set
	\begin{align*}
	|A|^2=|h|^2+|A^-|^2 \ \ \text{ and} \ \ |\mathring{A}|^2=|\mathring{h}|^2+|\mathring{A}^-|^2.
	\end{align*}
In the given frame, the second fundamental form $A$ is expressed as a sum over symmetric 2-tensors $A^\alpha$ and basis vectors $e_\alpha$ :
	\begin{align*}
	A=\sum_\alpha A^\alpha \otimes e_\alpha,
	\end{align*}where the $A^\alpha=\left(A_{i j}^\alpha\right)$ are symmetric 2-tensors. Taking the trace of the second fundamental form, with respect to the metric $g$, we get the mean curvature vector $H$:
	\begin{align*}
	H=\sum_\alpha \operatorname{tr} A^\alpha e_\alpha=\sum_\alpha \sum_{i j} g^{i j} A_{i j}^\alpha e_\alpha.
	\end{align*}
The mean curvature vector $H$ can be written in the various forms
	\begin{align*}
H=\operatorname{tr}_g h=g^{i j} h_{i j}=h_i{ }^i=h_{i i}=g^{i j} h_{i j}{ }^\alpha \nu_\alpha=h_{i i \alpha} \nu_\alpha.
	\end{align*}
The traceless part of the second fundamental form is defined as $\mathring{A}=A-\frac{1}{m} H g$, whose components are given by $\mathring{A}_{i j}^\alpha=A_{i j}^\alpha-\frac{1}{m}H^\alpha g_{i j}$, where $H^\alpha=\sum_{r,s} g^{r s} A_{r s}^\alpha$. Obviously, we have $\sum_{i} \mathring{A}_{ii}^\alpha=0$. Also, the squared length satisfies $|\mathring{A}|^2=|A|^2-\frac{1}{m}|H|^2$.
Let
	\begin{align*}
	R_{ijkl}=g\big(R(e_i,e_j)e_k,e_l\big),  \ \ \bar{R}_{abcd}=\langle\bar{R}(e_a,e_b)e_c,e_d\rangle \ \ \text{and} \ \ R_{ij\alpha\beta}^\bot =\langle R^\bot(e_i,e_j)e_\alpha,e_\beta\rangle.
	\end{align*}
In the context of hypersurfaces, the mean curvature vector $H$ is a multiple of the unit normal vector $\nu$ and satisfies $H=-\left(\lambda_1+\cdots+\lambda_m\right) \nu$, where $\lambda_1 \leq \cdots \leq \lambda_m$ are the principal curvatures. Additionally, $|A|^2=\lambda_1^2+\cdots+\lambda_m^2$, and $|\mathring{A}|^2=\frac{1}{m} \sum_{i<j}\left(\lambda_i-\lambda_j\right)^2$, indicating that smallness of $|\mathring{A}|^2$ implies close values of the principal curvatures.

The evolution equations of the length and squared length of the mean curvature and second fundamental form in a general Riemannian space have been computed in \cite{AnBa10} and \cite{BakerThesis}. In our case, these equations become simpler due to the symmetry of the ambient manifold. Specifically, we will focus on the equations for $|H|^2$, $|A|^2$, and the volume form $d\mu_t$ associated with the immersion at time $t$.
\begin{proposition}\label{eqn|A|^2|H|^2}
With the summation convention, the evolution equations of $A_{ij}$ and $H$ are
	\begin{align}\label{eqn_A}
	\left(\partial_t-\Delta\right)A_{ij}&=\sum_{p,q}\langle A_{ij},A_{pq}\rangle A_{pq}+\sum_{p,q}\langle A_{iq},A_{pq}\rangle A_{pj}+\sum_{p,q}\langle A_{jq},A_{pq}\rangle A_{pi}-2\sum_{p,q}\langle A_{ip},A_{jq}\rangle A_{pq}\nonumber\\
	&+2\sum_{p,q}\bar{R}_{ipjq}A_{pq}-\sum_{k,p}\bar{R}_{kjkp} A_{pi}-\sum_{k,p}\bar{R}_{kikp}A_{pj}+\sum_{k,\alpha,\beta}A^\alpha_{ij}\bar{R}_{k\alpha k\beta} \nu_\beta\nonumber\\
	&-2\sum_{p,\alpha,\beta}A^\alpha_{jp}\bar{R}_{ip\alpha\beta}\nu_\beta-2\sum_{p,\alpha,\beta}A^\alpha_{ip}\bar{R}_{jp\alpha\beta}\nu_\beta,
	\end{align}
	\begin{align}\label{eqn_H}
	\left(\partial_t-\Delta\right)H=\sum_{p,q}\langle H,A_{pq}\rangle A_{pq}+\sum_{k,\alpha,\beta}H^\alpha\bar{R}_{k\alpha k\beta}\nu_\beta.
	\end{align}
\end{proposition}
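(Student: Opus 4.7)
The plan is to reduce the proposition to the general evolution equations for $A$ and $H$ under mean curvature flow in an arbitrary Riemannian ambient manifold, which are carefully derived in \cite{AnBa10} and \cite{BakerThesis}, and then to specialise to $\mathbb{C}P^n$ by using its symmetric-space structure. Concretely, in any background $(\bar M,\bar g)$ the general formulas take the form displayed in \eqref{eqn_A}--\eqref{eqn_H}, with additional extra terms of the schematic form $\bar\nabla\bar R \ast A$ coming from commuting covariant derivatives in the derivation of the Simons identity for $A$ (and its trace for $H$). Since $(\mathbb{C}P^n,g_{FS})$ is a symmetric space we have $\bar\nabla\bar R=0$, and therefore all such terms vanish, leaving precisely the expressions stated in the proposition.

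First I would recall the two ingredients needed to re-derive the evolution equations from scratch, just to fix notation: (a) the flow equation $\partial_t F = H$ gives $\partial_t g_{ij} = -2\langle H,A_{ij}\rangle$ and $\partial_t e_\alpha = \nabla^\perp_{\text{something}} H$ style identities that can be handled using an adapted frame, yielding a formula for $\partial_t A_{ij}$ that involves two tangential derivatives of $H$ plus curvature corrections; (b) the Simons-type identity $\Delta A_{ij} = \nabla^2_{ij}H + (\text{quartic terms in }A) + (\text{terms involving }\bar R\text{ and }\bar\nabla\bar R)$, obtained by twice applying the Codazzi equation $\nabla^\perp_k A_{ij} = \nabla^\perp_i A_{jk} + (\bar R_{kijp})^\perp$ and then commuting covariant derivatives via the Gauss and Ricci equations. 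Subtracting (b) from (a) cancels the $\nabla^2 H$ terms and produces the quartic interaction $\sum \langle A_{ij},A_{pq}\rangle A_{pq}+\dots$ together with curvature terms $\bar R\ast A$ and derivative-of-curvature terms $\bar\nabla\bar R\ast A$.

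The second step is the specialisation. Since $\bar\nabla\bar R=0$ on $\mathbb{C}P^n$, all $\bar\nabla\bar R\ast A$ contributions disappear and only the undifferentiated curvature contractions remain; these sort into the five terms $2\bar R_{ipjq}A_{pq}$, $-\bar R_{kjkp}A_{pi}$, $-\bar R_{kikp}A_{pj}$, $A_{ij}^\alpha \bar R_{k\alpha k\beta}\nu_\beta$, and $-2A_{jp}^\alpha\bar R_{ip\alpha\beta}\nu_\beta$ (and its symmetric $i\leftrightarrow j$ partner), depending on whether they arise from swapping tangent-tangent, tangent-normal, or normal-normal derivatives. These are exactly the terms appearing in \eqref{eqn_A}. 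For \eqref{eqn_H}, I would simply take the trace $g^{ij}(\partial_t-\Delta)A_{ij}$, using $\partial_t g^{ij} = 2\langle H,A^{ij}\rangle$ to correct for the time-dependence of the metric; the quartic terms collapse to $\langle H,A_{pq}\rangle A_{pq}$ because traced-pair contractions cancel against each other, and the ambient curvature contribution collapses to $H^\alpha \bar R_{k\alpha k\beta}\nu_\beta$ after using the first Bianchi identity and the symmetry $\bar R_{kikp}=\bar R_{kpki}$.

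The main bookkeeping obstacle is keeping track of the index positions and which components (tangential vs. normal) each derivative produces, because the difference between the general Riemannian formula and the $\mathbb{C}P^n$ formula is exactly the disappearance of the $\bar\nabla\bar R$ terms and one must verify that no such term has been absorbed into the remaining ones. Once this is checked term by term, and the Kähler structure is only used through $\bar\nabla\bar R=0$ (the explicit form of $\bar R$ from \eqref{RinCPn} is not needed at this stage, it will be used later in sections 3--4), the proposition follows directly from the computations in \cite[Proposition~3.3]{AnBa10} and \cite[Chapter~2]{BakerThesis}.
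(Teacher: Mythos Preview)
Your proposal is correct and matches the paper's approach: the paper does not give an explicit proof of this proposition but simply refers to the general evolution equations derived in \cite{AnBa10} and \cite{BakerThesis}, noting that they simplify in $\mathbb{C}P^n$ because the ambient space is symmetric (so $\bar\nabla\bar R=0$). Your outline is precisely a fleshed-out version of this reduction, and the bookkeeping you describe for eliminating the $\bar\nabla\bar R$ terms and tracing to obtain \eqref{eqn_H} is accurate.
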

\begin{lemma}[\cite{BakerThesis}, Section 5.1]\label{eqn_AHCP}
Let us consider a family of immersions $F\colon \mathcal{M}^m\times [0,T) \to \mathbb{C}P^{n}$ moving by mean curvature flow. Then, we have the following evolution equations
	\begin{align}
	\partial_t d\mu_t=-|H|^2d\mu_t,
	\end{align}
	\begin{align}\label{eqn_|A|^2}
	\left(\partial_t-\Delta\right) |A|^2&=-2|\nabla A|^2+2\sum_{\alpha,\beta}\big(\sum_{i,j} A^\alpha_{ij} A_{ij}^\beta \big)^2+2\sum_{i,j,\alpha,\beta}\Big(\sum_p\big(A^\alpha_{ip}A_{jp}^\beta -A^\alpha_{jp}A_{ip}^\beta \big)\Big)^2\nonumber\\
	&+4\sum_{i,j,p,q}\bar{R}_{ipjq}\big(\sum_{\alpha} A^\alpha_{pq}A^\alpha_{ij}\big)-4\sum_{j,k,p}\bar{R}_{kjkp}\big(\sum_{i,\alpha} A^\alpha_{pi}A^\alpha_{ij}\big)\nonumber\\
	&+2\sum_{k,\alpha,\beta}\bar{R}_{k\alpha k\beta}\big(\sum_{i,j} A^\alpha_{ij}A_{ij}^\beta \big)-8\sum_{j,p,\alpha,\beta}\bar{R}_{jp\alpha\beta}\big(\sum_iA^\alpha_{ip}A_{ij}^\beta \big),
	\end{align}
	\begin{align}\label{eqn_|H|^2}
	\left(\partial_t-\Delta\right) |H|^2=-2|\nabla H|^2+2\sum_{i,j}\big(\sum_{\alpha} H^\alpha A^\alpha_{ij}\big)^2+2\sum_{k,\alpha,\beta}\bar{R}_{k\alpha k\beta} H^\alpha H^\beta.
	\end{align}
\end{lemma}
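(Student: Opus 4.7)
The plan is to derive all three evolution equations directly from the tensorial evolution equations for $A_{ij}$ and $H$ already established in Proposition \ref{eqn|A|^2|H|^2}, together with the standard Bochner-type identity
\begin{equation*}
(\partial_t - \Delta)|T|^2 = 2\langle T, (\partial_t - \Delta)T\rangle - 2|\nabla T|^2,
\end{equation*}
valid for any section $T$ of a bundle with a metric-compatible connection along the flow.

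\textbf{Volume form.} Under the mean curvature flow the induced metric evolves by $\partial_t g_{ij} = -2\langle A_{ij}, H\rangle$, a standard first-variation identity for normal deformations. Tracing gives $g^{ij}\partial_t g_{ij} = -2|H|^2$, so $\partial_t \log\sqrt{\det g} = -|H|^2$ and hence $\partial_t d\mu_t = -|H|^2 d\mu_t$. This step is purely local and does not involve the ambient curvature.

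\textbf{Evolution of $|H|^2$.} I would apply the identity above with $T = H$ and substitute \eqref{eqn_H}. Pairing with $H$, the reaction term $\sum_{p,q}\langle H, A_{pq}\rangle A_{pq}$ contracts to $\sum_{i,j}\bigl(\sum_\alpha H^\alpha A^\alpha_{ij}\bigr)^2$ after the relabelling $(p,q)\mapsto(i,j)$, while the curvature term produces $\sum_{k,\alpha,\beta}\bar R_{k\alpha k\beta}H^\alpha H^\beta$ directly. This yields \eqref{eqn_|H|^2}.

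\textbf{Evolution of $|A|^2$.} Apply the same identity with $T = A$, substitute \eqref{eqn_A} and compute $\sum_{i,j}\langle (\partial_t - \Delta)A_{ij}, A_{ij}\rangle$. The first reaction term $\sum_{p,q}\langle A_{ij}, A_{pq}\rangle A_{pq}$ immediately produces $\sum_{\alpha,\beta}\bigl(\sum_{i,j} A^\alpha_{ij}A^\beta_{ij}\bigr)^2$. The remaining three quadratic terms $\sum_{p,q}\langle A_{iq}, A_{pq}\rangle A_{pj}$, $\sum_{p,q}\langle A_{jq}, A_{pq}\rangle A_{pi}$ and $-2\sum_{p,q}\langle A_{ip}, A_{jq}\rangle A_{pq}$ rearrange, after using the symmetry $A^\alpha_{ij} = A^\alpha_{ji}$ and relabelling of dummy indices, into the commutator square $\sum_{i,j,\alpha,\beta}\bigl(\sum_p(A^\alpha_{ip}A^\beta_{jp} - A^\alpha_{jp}A^\beta_{ip})\bigr)^2$. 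The eight ambient-curvature terms in \eqref{eqn_A}, contracted with $A_{ij}$, reduce by direct index manipulation to the four curvature terms displayed in \eqref{eqn_|A|^2}. The identity $\bar\nabla\bar R = 0$ for $\mathbb{C}P^n$ ensures that no gradient-of-curvature terms survive on the right-hand side.

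\textbf{Main obstacle.} The only substantive calculation is the rearrangement producing the commutator-squared expression; this is the standard Simons--Ricci identity for higher-codimension submanifolds. Concretely, after polarising $\sum\langle A_{ip}, A_{jq}\rangle\langle A_{pq}, A_{ij}\rangle$ one extracts the antisymmetric part in $(p,q)$ via the swap $p\leftrightarrow q$ and exploits the symmetry of each $A^\alpha$ to regroup the symmetric piece with $\sum\langle Q_2,A_{ij}\rangle + \sum\langle Q_3,A_{ij}\rangle$. Once this combinatorial identification is completed, the remainder of the lemma reduces to routine index bookkeeping, and the proof is complete.
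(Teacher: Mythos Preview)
Your proposal is correct and follows the standard derivation: the paper itself does not supply a proof of this lemma but simply cites Baker's thesis (Section 5.1), where the computation is carried out exactly along the lines you describe---contract the tensorial evolution equations \eqref{eqn_A} and \eqref{eqn_H} of Proposition \ref{eqn|A|^2|H|^2} against $A_{ij}$ and $H$ respectively, use the Bochner identity, and exploit $\bar\nabla\bar R=0$ to kill the derivative-of-curvature terms. The only point worth making explicit is that your Bochner identity $(\partial_t-\Delta)|T|^2 = 2\langle T,(\partial_t-\Delta)T\rangle - 2|\nabla T|^2$ tacitly assumes the bundle metric is time-independent, whereas the induced metric on $T^*\mathcal{M}$ evolves; this is precisely what Uhlenbeck's trick (used elsewhere in the paper) handles, and the extra $\partial_t g^{ij}$ contributions are absorbed into the reaction terms in the standard way.
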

By Berger's inequality,
	\begin{align}\label{Berger}
	&|\bar{R}_{acbc}|\le\frac{1}{2}(K_1+K_2), \ \ \text{ for} \ a\neq b\nonumber\\
	&\\
	&|\bar{R}_{abcd}|\le\frac{2}{3}(K_1+K_2), \ \ \text{for all distinct indices} \ a,b,c,d.\nonumber
	\end{align}
When the codimension $k=1$, the equations in Lemma \ref{eqn_AHCP} have a simpler form.
\begin{lemma} [\cite{PipSin}, Lemma 2.2]\label{lemma2.2pipsin}
On a hypersurface evolving by mean curvature flow in a symmetric ambient space we have
	\begin{align*}
\left(\partial_t-\Delta\right)|H|^2=-2|\nabla H|^2+2|H|^2\left(|A|^2+\bar{R} i c(\nu, \nu)\right),
	\end{align*}
	\begin{align*}
\left(\partial_t-\Delta\right)|A|^2= -2|\nabla A|^2+2|A|^2\left(|A|^2+\bar{R} i c(\nu, \nu)\right) -4 \sum_{i, j, p, l}\left(h_{i j} h_{j}^p \bar{R}_{p l i} {}^l-h^{i j} h^{l p} \bar{R}_{p i l j}\right),
	\end{align*}
where $\bar{R}ic$ is the Ricci tensor of the ambient manifold.
\end{lemma}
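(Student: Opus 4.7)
The plan is to obtain the lemma as the codimension-one specialization of the general evolution equations \eqref{eqn_|A|^2} and \eqref{eqn_|H|^2} of Lemma \ref{eqn_AHCP}. When $k=1$, the normal bundle is spanned by a single unit normal $\nu$, so the Greek indices $\alpha,\beta$ take only the value $m+1$; the tensor-valued second fundamental form $A^\alpha_{ij}$ collapses to the scalar-valued $h_{ij}$, while $H^\alpha=\langle H,\nu\rangle$, so $(H^\alpha)^2=|H|^2$. All sums over $\alpha,\beta$ therefore reduce to a single contribution.

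I would first handle $|H|^2$. The extrinsic reaction term $2\sum_{i,j}(\sum_\alpha H^\alpha A^\alpha_{ij})^2$ immediately becomes $2|H|^2|A|^2$ since $\sum_\alpha H^\alpha A^\alpha_{ij}=\langle H,\nu\rangle h_{ij}$. The ambient-curvature contribution $2\sum_{k,\alpha,\beta}\bar{R}_{k\alpha k\beta}H^\alpha H^\beta$ restricts to $\alpha=\beta=m+1$ and yields $2|H|^2\sum_{k=1}^m\bar{R}_{k\nu k\nu}$; adjoining the trivially vanishing $\bar{R}_{\nu\nu\nu\nu}$, this sum equals $2|H|^2\bar{R}ic(\nu,\nu)$, giving the stated formula for $|H|^2$.

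For $|A|^2$ I specialize \eqref{eqn_|A|^2} term by term. The extrinsic quartic $2\sum_{\alpha,\beta}(\sum_{i,j}A^\alpha_{ij}A^\beta_{ij})^2$ collapses to $2|A|^4$, while the antisymmetric commutator $2\sum_{i,j,\alpha,\beta}(\sum_p(A^\alpha_{ip}A^\beta_{jp}-A^\alpha_{jp}A^\beta_{ip}))^2$ vanishes identically because scalar entries commute. The mixed term $-8\sum_{j,p,\alpha,\beta}\bar{R}_{jp\alpha\beta}(\cdots)$ also vanishes, since $\bar{R}_{jp\nu\nu}$ is antisymmetric in its last pair. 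The term $2\sum_{k,\alpha,\beta}\bar{R}_{k\alpha k\beta}(\sum_{i,j}A^\alpha_{ij}A^\beta_{ij})$ reduces as before to $2|A|^2\bar{R}ic(\nu,\nu)$. What remains are the purely intrinsic curvature contractions $4\sum \bar{R}_{ipjq}h_{pq}h_{ij}-4\sum \bar{R}_{kjkp}h_{pi}h_{ij}$, which I would rewrite using the symmetry of $h$ together with the antisymmetries $\bar{R}_{abcd}=-\bar{R}_{bacd}=-\bar{R}_{abdc}$ and the block symmetry $\bar{R}_{abcd}=\bar{R}_{cdab}$, so as to produce the compact form $-4\sum(h_{ij}h_j{}^p\bar{R}_{pli}{}^l-h^{ij}h^{lp}\bar{R}_{pilj})$ appearing in the statement.

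The work is essentially bookkeeping; the main care needed is in the last step, where the precise index arrangement of the two intrinsic curvature terms must be produced by systematic relabelling under the $\bar{R}$-symmetries and the symmetry of $h$. No new difficulty comes from differentiating the ambient curvature, since in a symmetric ambient space $\bar{\nabla}\bar{R}=0$ and no such terms enter the formulas of Lemma \ref{eqn_AHCP} to begin with.
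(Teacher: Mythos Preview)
Your approach is correct: the lemma is precisely the codimension-one specialization of the general evolution equations \eqref{eqn_|A|^2} and \eqref{eqn_|H|^2}, and you have correctly identified which terms survive, which vanish (the commutator term and the $\bar{R}_{jp\nu\nu}$ term), and which require only index relabelling under the symmetries of $\bar{R}$ and $h$. The paper itself does not give a proof of this lemma---it is simply quoted from \cite{PipSin}, Lemma~2.2---so there is no alternative argument in the paper to compare against; your derivation is the natural one and would serve perfectly well.
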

Another type of frames, associated more with the geometry of $\mathbb{C} P^n$, is important for computations of the components of the Riemann curvature tensor of the ambient manifold. The properties for this case are described in the lemma below.
\begin{lemma} [\cite{PipSin}, Lemma 3.1]Let $\mathcal{M}$ be a submanifold of $\mathbb{C}P^n$ of dimension $m$ and codimension $k$. If $k \leq m$, then for every point $p \in \mathcal{M}$ there exist $\left\{e_1, \ldots, e_m\right\}$ an orthonormal basis of $T_p \mathcal{M}$ and $\left\{e_{m+1}, \ldots, e_{m+k}\right\}$ an orthonormal basis of $N_p \mathcal{M}$ such that:
	\begin{enumerate}
\item For every $r \leq \frac{k}{2}$ we have
	\begin{align}\label{basesJ}
\left\{\begin{array}{l}
J e_{m+2 r-1}=\tau_r e_{2 r-1}+\nu_r e_{m+2 r} \\
	J e_{m+2 r}=\tau_r e_{2 r}-\nu_r e_{m+2 r-1},
\end{array}\right.
	\end{align}with $\tau_r, \nu_r \in[0,1]$ and $\tau_r^2+\nu_r^2=1$.
\item If $k$ is odd, then $J e_{m+k}=e_k$.
\item The remaining vectors satisfy
	\begin{align}\label{eq3.4pipsin}
	J e_{k+1}=e_{k+2}, J e_{k+3}=e_{k+4}, \ldots, J e_{m-1}=e_m.
	\end{align}	\end{enumerate}
\end{lemma}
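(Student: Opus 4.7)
The plan is to reduce the statement to a piece of Hermitian linear algebra on $(T_p\mathbb{C}P^n, g_{FS}, J)$ relative to the orthogonal splitting $T_p\mathbb{C}P^n = T_p\mathcal{M} \oplus N_p\mathcal{M}$. I would begin by introducing the operator $\Psi : N_p\mathcal{M} \to N_p\mathcal{M}$ defined by $\Psi(\xi) := (J\xi)^\perp$, where $(\cdot)^\perp$ denotes the orthogonal projection onto $N_p\mathcal{M}$. Since $J$ is a $g_{FS}$-isometry with $\langle J\xi,\eta\rangle = -\langle\xi,J\eta\rangle$, the operator $\Psi$ is skew-symmetric, and the bound $|\Psi\xi|^2 \le |J\xi|^2 = |\xi|^2$ puts the eigenvalues of $-\Psi^2$ in $[0,1]$. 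Applying the real canonical form for skew-symmetric endomorphisms to $\Psi$ supplies an orthonormal basis $\{e_{m+1},\ldots,e_{m+k}\}$ of $N_p\mathcal{M}$ in which
\begin{align*}
\Psi e_{m+2r-1} = \nu_r\, e_{m+2r}, \qquad \Psi e_{m+2r} = -\nu_r\, e_{m+2r-1}, \qquad 1 \le r \le \lfloor k/2\rfloor,
\end{align*}
for numbers $\nu_r \in [0,1]$, together with a unit vector $e_{m+k} \in \ker\Psi$ when $k$ is odd. This settles the normal side of \eqref{basesJ}.

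Next, set $\tau_r := \sqrt{1-\nu_r^2}$ and transfer the pairing to the tangent side via the tangent components $(Je_{m+2r-1})^T$ and $(Je_{m+2r})^T$. Because $(Je_{m+2r-1})^\perp = \Psi e_{m+2r-1} = \nu_r e_{m+2r}$, each such tangent component has length $\tau_r$, and a short computation expanding $\langle (Je_{m+a})^T,(Je_{m+b})^T\rangle = \langle e_{m+a},e_{m+b}\rangle - \langle \Psi e_{m+a},\Psi e_{m+b}\rangle$ shows that tangent components coming from distinct normal indices are mutually orthogonal. Whenever $\tau_r > 0$, I would then define $e_{2r-1} := \tau_r^{-1}(Je_{m+2r-1})^T$ and $e_{2r} := \tau_r^{-1}(Je_{m+2r})^T$; these unit tangent vectors are forced by the required identity \eqref{basesJ}. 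If $k$ is odd, $\Psi e_{m+k}=0$ means $Je_{m+k}$ is tangent of unit length, and I would put $e_k := Je_{m+k}$, giving assertion (2).

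It remains to produce the other tangent vectors, namely $e_{k+1},\ldots,e_m$ and, for degenerate blocks with $\tau_r=0$, the still-undetermined pairs $(e_{2r-1},e_{2r})$. Let $W \subset T_p\mathcal{M}$ be the span of all tangent vectors already constructed and set $V := W^\perp \cap T_p\mathcal{M}$. The crucial verification is that $V$ is $J$-invariant: for $v \in V$, using the established formulas for $Je_{m+r}$ and splitting each into its tangent part (orthogonal to $v$ because it lies in $W$) and its normal part (orthogonal to $v$ because $v$ is tangent) yields $\langle Jv,e_{m+r}\rangle = -\langle v,Je_{m+r}\rangle = 0$, so $Jv \in T_p\mathcal{M}$; a parallel computation, now using that $Jv$ is tangent, gives $\langle Jv,e_j\rangle = 0$ for every generator $e_j$ of $W$. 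Since $m+k=2n$, the dimension $\dim V = 2n-2k+2\#\{r:\tau_r=0\}$ is even, and $J$ restricts to an orthogonal complex structure on $V$; I would inductively pick a unit vector $u \in V$ and pair it with $Ju$ to obtain a $J$-adapted orthonormal basis of $V$. The first such pairs fill in the undetermined $(e_{2r-1},e_{2r})$ (on which \eqref{basesJ} places no constraint since $\tau_r=0$), and the remaining pairs give $e_{k+1},\ldots,e_m$ satisfying \eqref{eq3.4pipsin}. The delicate point is precisely the bookkeeping around degenerate blocks $\tau_r=0$: one must verify that defining $V$ as the complement of only the \emph{forced} tangent vectors still yields a $J$-invariant subspace, which is exactly what the computation above ensures.
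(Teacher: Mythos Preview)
The paper does not supply its own proof of this lemma; it is quoted verbatim from \cite{PipSin} and invoked as a black box. Your argument is correct and is the natural one: diagonalise the skew-symmetric endomorphism $\Psi=(J\cdot)^\perp$ on $N_p\mathcal{M}$ to obtain the normal frame and the constants $\nu_r$, read off the forced tangent vectors $e_{2r-1},e_{2r}$ from the tangential projections $(Je_{m+2r-1})^T$, and complete on the $J$-invariant orthogonal complement $V\subset T_p\mathcal{M}$. The only point that deserves a word of caution is the sentence ``together with a unit vector $e_{m+k}\in\ker\Psi$ when $k$ is odd'': the kernel of $\Psi$ may well have dimension larger than one, and in that case you pair up the extra kernel vectors into degenerate blocks with $\nu_r=0$ (which you implicitly allow) before isolating a single leftover vector when $k$ is odd. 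With that understood, your bookkeeping around the degenerate blocks and the $J$-invariance of $V$ is clean and the proof is complete. This is essentially the argument one finds in \cite{PipSin}.
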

Any basis satisfying the properties outlined in the previous lemma will be called of kind (B2). Since $J^2=-i d$, from \eqref{basesJ} it follows that this kind of basis satisfies
	\begin{align}\label{basesJ2}
\left\{\begin{array}{l}
J e_{2 r-1}=-\nu_r e_{2 r}-\tau_r e_{m+2 r-1}, \\
	J e_{2 r}=\nu_r e_{2 r-1}-\tau_r e_{m+2 r}.
\end{array}\right.
	\end{align}
In the case where $k$ is odd, it is convenient to define $\tau_r=1, \nu_r=0$ for $r=\frac{k+1}{2}$. This ensures that the first equations in both \eqref{basesJ} and in \eqref{basesJ2} hold also for this value of $r$.\\
Generally, the requirements for (B1) and (B2) bases are incompatible, and the two types of bases are distinct. Therefore, when using type (B2) frames, $H=\sum_\alpha H^\alpha e_\alpha$, with $H^\alpha$ not necessarily zero for $\alpha>m+1$.\\
It's worth mentioning that when $k=1$, these constructions become trivial. In this scenario, there is a unique (up to sign) normal unit vector $e_{2 n}, H$ is a multiple of this vector, and $e_1=$ $J_{e_{2 n}}$ is a tangent vector. For a hypersurface, we can choose a basis that simultaneously belongs to both (B1) and (B2) types.\\
When $k \geq 2$, the following notation from \cite{AnBa10} is introduced:
	\begin{align*}
	&R_1:=\sum_{\alpha, \beta}(\sum_{i, j} A_{i j}^\alpha A_{i j}^\beta)^2+\sum_{i, j, \alpha, \beta}\left(\sum_p( A_{i p}^\alpha A_{j p}^\beta-A_{i p}^\beta A_{j p}^\alpha)\right)^2,\\
	&R_2:=\sum_{i,j}(\sum_\alpha H^\alpha A^\alpha_{i j})^2.
	\end{align*}
For a frame of type (B1), we easily get
	\begin{align*}
R_2= \begin{cases}|\mathring{h}|^2|H|^2+\frac{1}{m}|H|^4&\text{ if } H \neq 0, \\ 0&\text{ if } H=0.\end{cases}
	\end{align*}
The following result, proven in Section 3 in \cite{AnBa10} and in Section 5.2 in \cite{BakerThesis}, is valuable for estimating the reaction terms in the evolution equations of Lemma \ref{eqn_AHCP}. This result relies solely on the algebraic properties of $R_1$ and $R_2$ and is independent of the flow.
\begin{lemma}\label{lemma3.2} At a point where $H \neq 0$ we have, for any $c \in \mathbb{R}$
	\begin{align*}
	2 R_1-2 c R_2&=2|\mathring{h}|^4-2\left(c-\frac{2}{m}\right)|\mathring{h}|^2|H|^2-\frac{2}{m}\left(c-\frac{1}{m}\right)|H|^4 \\
	&+8|\mathring{h}|^2|\mathring{A}^-|^2+3|\mathring{A}^-|^4.
	\end{align*}In addition, if $c>1 / m$ and if $d \in \mathbb{R}$ is such that $|A|^2=c|H|^2+d$, we have
	\begin{align*}
	2 R_1-2 c R_2&\leq\left(6-\frac{2}{m c-1}\right)|\mathring{h}|^2|\mathring{A}^-|^2-3|\mathring{A}^-|^4 \\
	&+\frac{2 m cd}{m c-1}|\mathring{h}|^2+\frac{4 d}{m c-1}|\mathring{A}^-|^2-\frac{2 d^2}{m c-1}.
	\end{align*}\end{lemma}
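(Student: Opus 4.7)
Since both statements are pointwise algebraic claims at a point with $H \neq 0$, I would fix a basis of type (B1) throughout. In this basis $H = |H|\nu_1$, only $H^{m+1} = |H|$ is nonzero, $A^{m+1} = h = \mathring{h} + \frac{|H|}{m}\,\mathrm{Id}$, and $A^\alpha = \mathring{A}^\alpha$ is trace-free for $\alpha \ge m+2$. A direct computation then gives
\begin{align*}
R_2 \;=\; |h|^2|H|^2 \;=\; |\mathring{h}|^2|H|^2 + \tfrac{1}{m}|H|^4,
\end{align*}
which, multiplied by $-2c$, already accounts for the $|H|^2$ and $|H|^4$ contributions in part 1.

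For $R_1$ I would decompose both normal-index sums into the pure block $\alpha=\beta=m+1$, the cross block $\alpha=m+1$, $\beta\ge m+2$ (with its symmetric counterpart), and the remaining block $\alpha,\beta\ge m+2$. The first block contributes $(\tr h^2)^2 + |[h,h]|^2 = |h|^4 = |\mathring{h}|^4 + \tfrac{2}{m}|\mathring{h}|^2|H|^2 + \tfrac{1}{m^2}|H|^4$. The cross block collapses to $2\sum_{\alpha\ge m+2}\bigl[(\tr(\mathring{h}A^\alpha))^2 + |[\mathring{h},A^\alpha]|^2\bigr]$, because the identity part of $h$ commutes with every $A^\alpha$ and pairs trivially against the trace-free $A^\alpha$. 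Applying the sharp Li--Li type inequality recorded in \cite{AnBa10,BakerThesis} to bound the cross terms by $4|\mathring{h}|^2|\mathring{A}^-|^2$ and the purely $\alpha,\beta\ge m+2$ contribution by $\tfrac{3}{2}|\mathring{A}^-|^4$, and combining this with the $|h|^4$ expansion and the $R_2$ computation above, assembles the expression for $2R_1 - 2cR_2$ claimed in part 1.

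For part 2, the hypothesis $|A|^2 = c|H|^2 + d$ is equivalent to $|\mathring{h}|^2 + |\mathring{A}^-|^2 = (c - \tfrac{1}{m})|H|^2 + d$. Since $c > 1/m$, I would solve this relation for $|H|^2$, square to obtain $|H|^4$, and substitute both into the identity from part 1. Using $\frac{1}{c - 1/m} = \frac{m}{mc-1}$ and regrouping, the $|\mathring{h}|^4$ coefficient cancels exactly; the $|\mathring{h}|^2|\mathring{A}^-|^2$ coefficient assembles as $6 - \frac{2}{mc-1}$; the terms linear in $d$ combine into $\frac{2mcd}{mc-1}|\mathring{h}|^2$ and $\frac{4d}{mc-1}|\mathring{A}^-|^2$; the purely quadratic-in-$d$ constant becomes $-\frac{2d^2}{mc-1}$; and the residual $|\mathring{A}^-|^4$ term is discarded (it has the correct sign) to yield the advertised inequality.

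The main obstacle is securing the correct constants in the $R_1$ bound: one needs the Li--Li inequality in its sharp form for trace-free normal components, together with the observation that subtracting off the identity part of $h$ does not affect the commutator with $A^\alpha$ and does not introduce spurious trace terms. Once those algebraic bounds are in hand with the right constants, the rest of the proof is substitution and bookkeeping.
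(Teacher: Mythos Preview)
The paper does not supply its own proof of this lemma; it simply cites Section~3 of \cite{AnBa10} and Section~5.2 of \cite{BakerThesis}. Your plan \emph{is} essentially the argument carried out there: work in a (B1) frame, compute $R_2=|\mathring{h}|^2|H|^2+\tfrac{1}{m}|H|^4$ directly, split $R_1$ into the principal block, the cross block, and the purely orthogonal block, and bound the latter two by the Andrews--Baker estimate $\sum_{\beta\ge 2}\bigl[(\operatorname{tr}\mathring{h}A^\beta)^2+|[\mathring{h},A^\beta]|^2\bigr]\le 2|\mathring{h}|^2|\mathring{A}^-|^2$ and the Li--Li inequality, respectively. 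That part is correct; note in particular that part~1 is therefore an \emph{inequality}, and the ``$=$'' in the displayed statement is a typo for ``$\leq$''.

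There is, however, a genuine gap in your treatment of part~2. You write that ``the residual $|\mathring{A}^-|^4$ term is discarded (it has the correct sign)''. If you carry out the substitution $|H|^2=\tfrac{m}{mc-1}(|\mathring{h}|^2+|\mathring{A}^-|^2-d)$ into part~1 and collect, the $|\mathring{A}^-|^4$ coefficient comes out as $3-\tfrac{2}{mc-1}$, not $-3$; the discrepancy is $\bigl(6-\tfrac{2}{mc-1}\bigr)|\mathring{A}^-|^4$, which has \emph{no definite sign} under the sole hypothesis $c>1/m$. So the discarding step fails as written. The resolution is that the printed statement of part~2 contains a second typo: the factor $6-\tfrac{2}{mc-1}$ should multiply $|\mathring{A}|^2|\mathring{A}^-|^2$, not $|\mathring{h}|^2|\mathring{A}^-|^2$. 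This is confirmed by how the lemma is actually invoked later in the proof of Theorem~\ref{ThCPn}, where the coefficient appears in front of $|\mathring{A}|^2|\mathring{A}^-|^2$. With $|\mathring{A}|^2=|\mathring{h}|^2+|\mathring{A}^-|^2$ in place, your substitution produces the claimed right-hand side on the nose, with nothing left to discard.
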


\section{Preservation of the quadratic pinching condition}
In this section, we show the quadratic pinching condition \eqref{pinching_condition} in the complex projective space is preserved along the mean curvature flow, for a suitable positive constant. We define a new connection for the orthogonal decomposition of $N \mathcal{M}=E_1 \oplus \hat{E}$ where $\hat{E}$ consists of normal vectors $\nu$ which are everywhere orthogonal to $\nu_1,\left\langle\hat{\nu}, \nu_1\right\rangle=0$, and $E_1=C^{\infty}(M) \nu_1$. Define $\hat{\nabla}^\bot$ on $\hat{E}$ by
	\begin{align*}
\hat{\nabla}_i^{\perp} \hat{\nu}:=\nabla_i^{\perp} \hat{\nu}-\left\langle\nabla_i^{\perp} \hat{\nu}, \nu_1\right\rangle \nu_1.
	\end{align*}
\begin{proposition}[\cite{Naff}, Proposition 2.2]\label{prop2.2naff}
	\begin{align}\label{2.22naff}
|\nabla^{\perp} A|^2 & =\sum_{i,j,k}|\hat{\nabla}_i^{\perp} A^-_{j k}+h_{j k} \nabla_i^{\perp} \nu_1|^2+\sum_{i,j,k}|\langle\nabla_i^{\perp} A^-_{j k}, \nu_1\rangle+\nabla_i h_{j k}|^2.
	\end{align}
	\begin{align}\label{2.23naff}
|\nabla^{\perp} H|^2 & =|H|^2|\nabla^{\perp} \nu_1|^2+|\nabla| H||^2. 
	\end{align}
	\begin{align}\label{2.24naff}
|\nabla^{\perp} A^-|^2 & =|\hat{\nabla}^{\perp} A^-|^2+|\langle\nabla^{\perp} A^-, \nu_1\rangle|^2.
	\end{align}
\end{proposition}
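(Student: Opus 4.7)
The plan is to prove all three identities by direct computation using the orthogonal decomposition $N\mathcal{M}=E_1\oplus\hat{E}$ together with the Leibniz rule for $\nabla^\perp$. The single technical fact driving every step is that, because $|\nu_1|^2\equiv 1$, differentiating yields $\langle\nabla_i^\perp\nu_1,\nu_1\rangle=0$, so $\nabla^\perp\nu_1$ automatically lies in $\hat{E}$. Once this is in hand, each identity is an application of the Pythagorean theorem on $N\mathcal{M}=E_1\oplus\hat{E}$.

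For \eqref{2.22naff}, the first step is to write $A_{jk}=A^-_{jk}+h_{jk}\nu_1$ and apply $\nabla^\perp_i$ via Leibniz:
\begin{align*}
\nabla^\perp_i A_{jk}=\nabla^\perp_i A^-_{jk}+(\nabla_i h_{jk})\nu_1+h_{jk}\nabla^\perp_i\nu_1.
\end{align*}
Next, I decompose $\nabla^\perp_i A^-_{jk}=\hat{\nabla}^\perp_i A^-_{jk}+\langle\nabla^\perp_i A^-_{jk},\nu_1\rangle\nu_1$ according to the definition of $\hat{\nabla}^\perp$. Collecting the $E_1$-components and the $\hat{E}$-components separately (using that $h_{jk}\nabla^\perp_i\nu_1\in\hat{E}$ by the observation above) gives
\begin{align*}
\nabla^\perp_i A_{jk}=\bigl(\hat{\nabla}^\perp_i A^-_{jk}+h_{jk}\nabla^\perp_i\nu_1\bigr)+\bigl(\langle\nabla^\perp_i A^-_{jk},\nu_1\rangle+\nabla_i h_{jk}\bigr)\nu_1,
\end{align*}
and the two pieces are mutually orthogonal. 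Taking squared norms and summing in $i,j,k$ yields \eqref{2.22naff}.

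For \eqref{2.23naff}, I write $H=|H|\nu_1$, apply Leibniz to obtain $\nabla^\perp_i H=(\nabla_i|H|)\nu_1+|H|\nabla^\perp_i\nu_1$, and use the orthogonality of the two summands to read off the identity. For \eqref{2.24naff}, the splitting $\nabla^\perp_i A^-_{jk}=\hat{\nabla}^\perp_i A^-_{jk}+\langle\nabla^\perp_i A^-_{jk},\nu_1\rangle\nu_1$ is itself an orthogonal decomposition, so the Pythagorean identity is immediate.

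There is no serious obstacle; the proof is purely algebraic manipulation once the decomposition is set up. The only subtlety worth flagging is ensuring that $\hat{\nabla}^\perp_i A^-_{jk}$ and $h_{jk}\nabla^\perp_i\nu_1$ both genuinely lie in $\hat{E}$ before invoking the Pythagorean theorem, which in both cases comes down to the unit-length property of $\nu_1$.
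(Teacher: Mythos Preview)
Your proof is correct and is exactly the natural argument. The paper does not give its own proof of this proposition; it simply cites it from \cite{Naff}, so there is nothing to compare against. Your decomposition via $A_{jk}=A^-_{jk}+h_{jk}\nu_1$, the Leibniz rule, and the Pythagorean identity on $N\mathcal{M}=E_1\oplus\hat{E}$ (with $\nabla^\perp_i\nu_1\in\hat{E}$ because $|\nu_1|\equiv 1$) is precisely the standard derivation.
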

It is very useful to consider the implications of the Codazzi equation for the decomposition of $\nabla_i^{\perp} A_{j k}$ above. Projecting the Codazzi equation onto $E_1$ and $\hat{E}$ implies the both of the tensors
	\begin{align*}
\nabla_i h_{j k}+\langle\nabla_i^{\perp} A^-_{j k}, \nu_1\rangle \quad \text { and } \quad \hat{\nabla}_i^{\perp} A^-_{j k}+h_{j k} \nabla_i^{\perp} \nu_1
	\end{align*}
are symmetric in $i, j, k$. Consequently, it is equivalent to trace over $j, k$ or trace over $i, k$, and this implies
	\begin{align}\label{2.25naff}
& \sum_{k=1}^m (\nabla_k h_{i k}+\langle\nabla_k^{\perp} A^-_{i k}, \nu_1\rangle)=\nabla_i|H|, 
	\end{align}
	\begin{align}\label{2.26naff}
& \sum_{k=1}^m( \hat{\nabla}_k^{\perp} A^-_{i k}+h_{i k} \nabla_k^{\perp} \nu_1)=|H| \nabla_i^{\perp} \nu_1.
	\end{align}
In order to prove that the pinching condition is preserved under the flow, we primarily follow Pipoli and Sinestrari in \cite{PipSin}.

Also, we have a sharp Kato type inequality on the gradient terms appearing in the evolution equations for $|A|^2$ and $|H|^2$, which will be used many times in the rest of the paper. Observe that the results are independent of the flow. 
We begin with the following inequality, initially established in Lemma 2.2 in \cite{Hu86} for hypersurfaces and later extended to general codimension in Lemma 3.2 in \cite{Liu}.
\begin{lemma}\label{lemma3.3pipsin}
 Let $\overline{\mathcal{M}}$ an Riemannian manifold and $\mathcal{M}$ a submanifold of $\overline{\mathcal{M}}$ of dimension $m$ and arbitrary codimension. Then
	\begin{align}\label{eq3.8pipsin}
|\nabla A|^2 \geq\left(\frac{3}{m+2}-\eta\right)|\nabla H|^2-\frac{2}{m+2}\left(\frac{2}{m+2} \eta^{-1}-\frac{m}{m-1}\right)|\omega|^2,
	\end{align}
holds for any $\eta>0$. Here $\omega=\sum_{i j \alpha} \bar{R}_{\alpha j i j} e_i \otimes \omega_\alpha$, where $\omega_\alpha$ is the dual frame to $e_\alpha$.
\end{lemma}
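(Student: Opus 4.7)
The strategy is to reproduce the classical Euclidean Kato inequality $|\nabla A|^{2}\ge\frac{3}{m+2}|\nabla H|^{2}$ and then to keep careful track of the Codazzi defect, which in a general ambient space is a trace of the ambient curvature. Fix a point $p\in\mathcal{M}$, an orthonormal frame, and a normal index $\alpha$. Define
\[
F_{ijk}^{\alpha}:=\frac{1}{m+2}\bigl(\delta_{ij}\nabla_{k}H^{\alpha}+\delta_{ik}\nabla_{j}H^{\alpha}+\delta_{jk}\nabla_{i}H^{\alpha}\bigr),\qquad B_{ijk}^{\alpha}:=\nabla_{i}A_{jk}^{\alpha}-F_{ijk}^{\alpha}.
\]
A direct contraction yields $|F|^{2}=\frac{3}{m+2}|\nabla H|^{2}$. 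The tensor $B$ is symmetric in $(j,k)$ and has vanishing $(j,k)$-trace, that is $\sum_{j}B_{ijj}^{\alpha}=0$, because the $(j,k)$-traces of $\nabla_{i}A_{jk}^{\alpha}$ and $F_{ijk}^{\alpha}$ both equal $\nabla_{i}H^{\alpha}$. Tracing the Codazzi identity on $(j,k)$ and using the symmetries of $\bar R$ produces $\sum_{j}\nabla_{j}A_{ij}^{\alpha}=\nabla_{i}H^{\alpha}+\omega_{i}^{\alpha}$, so that
\[
\sum_{i}B_{iij}^{\alpha}=\omega_{j}^{\alpha}.
\]

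I would then expand $|\nabla A|^{2}=|B|^{2}+|F|^{2}+2\langle B,F\rangle$ and use the Codazzi identity three times to obtain $\langle\nabla A,F\rangle=\frac{1}{m+2}\bigl(3|\nabla H|^{2}+2\langle\omega,\nabla H\rangle\bigr)$, hence $\langle B,F\rangle=\frac{2}{m+2}\langle\omega,\nabla H\rangle$. This produces the key identity
\[
|\nabla A|^{2}=|B|^{2}+\frac{3}{m+2}|\nabla H|^{2}+\frac{4}{m+2}\langle\omega,\nabla H\rangle,
\]
which reduces to the Euclidean Kato inequality exactly when $\omega\equiv 0$ and $|B|^{2}\ge 0$ is discarded.

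The crucial step is a sharp lower bound for $|B|^{2}$ in terms of $|\omega|^{2}$. Among all tensors symmetric in $(j,k)$, traceless in $(j,k)$, and with $(i,j)$-trace equal to $\omega_{j}^{\alpha}$, the squared norm is minimised by the ansatz
\[
B_{ijk}^{\alpha}=\frac{m}{(m-1)(m+2)}\bigl(\delta_{ij}\omega_{k}^{\alpha}+\delta_{ik}\omega_{j}^{\alpha}\bigr)-\frac{2}{(m-1)(m+2)}\delta_{jk}\omega_{i}^{\alpha},
\]
whose coefficients are forced by the two trace conditions. Computing the squared norm of this extremal tensor gives $|B|^{2}\ge\frac{2m}{(m-1)(m+2)}|\omega|^{2}$. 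Applying Young's inequality in the form $\frac{4}{m+2}\langle\omega,\nabla H\rangle\ge-\eta|\nabla H|^{2}-\frac{4}{(m+2)^{2}\eta}|\omega|^{2}$ converts the cross term into the $\eta$-weighted correction; combining the three contributions and factoring $\frac{2}{m+2}$ out of the $|\omega|^{2}$ coefficient produces exactly the stated estimate.

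The main obstacle is justifying that the displayed ansatz really is the $L^{2}$-minimiser: one must argue that any admissible $B$ decomposes orthogonally into this trace-type piece plus a piece with both traces vanishing, which then yields $|B|^{2}\ge\frac{2m}{(m-1)(m+2)}|\omega|^{2}$ automatically. Beyond this, the remaining work is routine bookkeeping with Codazzi corrections, although one has to be careful with the sign coming from the antisymmetries of $\bar R$ to correctly identify the $(i,j)$-trace of $B$ with $\omega$ rather than $-\omega$.
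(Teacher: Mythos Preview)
Your argument is correct and is precisely the standard proof. The paper does not actually supply a proof of this lemma; it merely records the statement and cites Lemma~2.2 in \cite{Hu86} for the hypersurface case and Lemma~3.2 in \cite{Liu} for general codimension. Your decomposition $\nabla A=B+F$, the trace identities $\sum_j B_{ijj}^\alpha=0$ and $\sum_i B_{iij}^\alpha=\pm\omega_j^\alpha$, and the sharp lower bound $|B|^2\ge\frac{2m}{(m-1)(m+2)}|\omega|^2$ via orthogonal projection onto the trace-type ansatz are exactly the ingredients in those references, so there is nothing to add. The sign ambiguity you flag in the identification of the $(i,j)$-trace with $\omega$ is genuine but, as you note, immaterial: it disappears in $|\omega|^2$ and is absorbed by the Young inequality on the cross term.
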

Note that if the ambient space is Einstein, like $\mathbb{C P}^n$, and if $\mathcal{M}$ is a hypersurface, then $\omega=0$. For $\eta \rightarrow 0$ in inequality \eqref{eq3.8pipsin}, we get
	\begin{align}\label{eq3.9pipsin}
|\nabla A|^2 \geq \frac{3}{m+2}|\nabla H|^2 .
	\end{align}
For $k\ge2$, $\omega$ is in general nonzero. Using the special properties of $\mathbb{C P}^n$, we have the following estimate.
\begin{lemma}[\cite{PipSin}, Lemma 3.4]\label{lemma3.4pipsin}
 Let $\mathcal{M}$ be a submanifold of $\mathbb{C P}^n$ of dimension $m$ and codimension $k \leq m$. Then we have, at any point of $\mathcal{M}$,
	\begin{align*}
|\nabla A|^2 \geq \frac{2}{9}(m+1)|\omega|^2 .
	\end{align*}
\end{lemma}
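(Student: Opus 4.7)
The plan is to combine the Kato-type inequality of Lemma \ref{lemma3.3pipsin} with the explicit structure of the ambient curvature of $\mathbb{C}P^n$ in a type (B2) frame. A direct expansion of \eqref{RinCPn} together with the orthogonality of tangent and normal basis vectors and the skew-symmetry $g(e_j, Je_j)=0$ gives
\[
\bar R_{\alpha j i j} \;=\; 3\,g(e_\alpha, Je_j)\,g(e_i, Je_j)
\]
for any tangent $e_i, e_j$ and normal $e_\alpha$. Substituting the (B2) expressions \eqref{basesJ}, \eqref{basesJ2}, \eqref{eq3.4pipsin} for the action of $J$ on the frame, the inner products $g(e_\alpha, Je_j)$ vanish except on the indices paired by $J$ between tangent and normal directions. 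The only nonzero components of $\omega$ are then $\omega_{2r}^{\,m+2r-1}=3\tau_r\nu_r$ and $\omega_{2r-1}^{\,m+2r}=-3\tau_r\nu_r$ for $r\le k/2$, and hence the compact formula $|\omega|^2 = 18\sum_{r}\tau_r^2\nu_r^2$.

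The next step is to exploit the Codazzi identity
\[
(\nabla^\perp_i A_{jk})^\alpha - (\nabla^\perp_j A_{ik})^\alpha \;=\; -\bar R_{ijk\alpha}
\]
for each quadruple $(i,j,k,\alpha)$ where $\bar R_{ijk\alpha}$ does not vanish in the (B2) frame. By $a^2+b^2\ge\tfrac{1}{2}(a-b)^2$, each such identity yields a lower bound of $\tfrac{1}{2}\bar R_{ijk\alpha}^2$ on two distinct components of $\nabla^\perp A$. Independently, applying Lemma \ref{lemma3.3pipsin} with the choice $\eta = 3/(m+2)$ kills the $|\nabla H|^2$ coefficient and gives $|\nabla A|^2 \ge \tfrac{2}{3(m-1)}|\omega|^2$, which already matches the target constant $\tfrac{2(m+1)}{9}$ when $m=2$.

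For $m\ge 3$ the baseline Kato bound is strictly weaker than the desired inequality, so one must combine it additively with the Codazzi bounds above, using the fact that the Codazzi identities constrain components of $\nabla^\perp A$ other than those entering the $|\nabla H|^2$ and $|\omega|^2$ terms in Lemma \ref{lemma3.3pipsin}. The principal obstacle will be the combinatorial bookkeeping: enumerating all nonzero $\bar R_{ijk\alpha}$ in the (B2) frame, grouping them by the components of $\nabla^\perp A$ they constrain, and tuning the parameter $\eta$ so that the accumulated lower bound reproduces exactly the sharp constant $\tfrac{2(m+1)}{9}$ appearing in the statement.
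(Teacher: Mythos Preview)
The paper does not give its own proof of this lemma; it merely cites \cite{PipSin}, so there is no in-paper argument to compare against. Your setup is correct: the formula $\bar R_{\alpha jij}=3\,g(e_\alpha,Je_j)\,g(e_i,Je_j)$, the identification of the nonzero components of $\omega$ in a (B2) frame, and the value $|\omega|^2=18\sum_r\tau_r^2\nu_r^2$ all agree with the Pipoli--Sinestrari computation, and the Codazzi identity is indeed the essential tool.

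The genuine gap is the strategy for $m\ge 3$. You propose to add the Kato bound from Lemma~\ref{lemma3.3pipsin} to further direct Codazzi estimates, asserting that the latter constrain components of $\nabla^\perp A$ ``other than those entering'' Lemma~\ref{lemma3.3pipsin}. This is not right: Lemma~\ref{lemma3.3pipsin} is itself proved by decomposing the \emph{full} tensor $\nabla A$ into irreducible pieces using Codazzi symmetry, so its inequality already consumes every component of $|\nabla A|^2$; there is no independent reservoir to add on top, and no choice of $\eta$ in that lemma can produce the factor $(m+1)$, since that proof is insensitive to the specific curvature of $\mathbb{CP}^n$. Pipoli--Sinestrari in fact do not invoke Lemma~\ref{lemma3.3pipsin} at all here. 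Instead, for each fixed $r$ they enumerate \emph{all} nonzero $\bar R_{ijk\alpha}$ with $\alpha\in\{m+2r-1,m+2r\}$ in the (B2) frame and apply $a^2+b^2\ge\tfrac12(a-b)^2$ to the associated Codazzi identities. The point your outline misses is that the number of such independent constraints grows linearly in $m$: every complex pair among the tangent basis vectors --- the pairs $(e_{2s-1},e_{2s})$ for $s\neq r$ together with the $(m-k)/2$ pairs from \eqref{eq3.4pipsin} --- produces its own Codazzi relation involving pairwise distinct components of $\nabla A$. Summing these yields a lower bound proportional to $(m+1)\tau_r^2\nu_r^2$, and then summing over $r$ gives $|\nabla A|^2\ge 4(m+1)\sum_r\tau_r^2\nu_r^2=\tfrac{2(m+1)}{9}|\omega|^2$. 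The factor $(m+1)$ is thus a counting phenomenon tied to the dimension of the tangent space, not a byproduct of optimizing $\eta$.
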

From the previous result we obtain an estimate similar to \eqref{eq3.9pipsin} for general codimension, as shown in the next lemma.
\begin{lemma}[\cite{PipSin}, Lemma 3.5]\label{katoinequality}
For any submanifold $\mathcal{M}$ of $\mathbb{C}P^n$ with dimension satisfying the assumptions of Theorem \ref{ThCPn}, we have
	\begin{align*}
	|\nabla A|^2 \geq \frac{16}{9(m+2)}|\nabla H|^2.
	\end{align*}\end{lemma}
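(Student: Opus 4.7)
The plan is to combine the two preceding estimates. Lemma \ref{lemma3.3pipsin} provides a lower bound on $|\nabla A|^2$ involving $|\nabla H|^2$ but with a possibly large negative contribution from $|\omega|^2$, while Lemma \ref{lemma3.4pipsin} gives a complementary bound $|\omega|^2 \leq \frac{9}{2(m+1)}|\nabla A|^2$. Substituting the latter into the former lets the $|\omega|^2$ term be absorbed on the left-hand side, producing a two-sided estimate purely between $|\nabla A|^2$ and $|\nabla H|^2$.

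Concretely, I would fix a parameter $\eta \in \bigl(0, \tfrac{2(m-1)}{m(m+2)}\bigr)$, so that the coefficient $\frac{2}{m+2}\bigl(\frac{2}{(m+2)\eta}-\frac{m}{m-1}\bigr)$ appearing in Lemma \ref{lemma3.3pipsin} is strictly positive, and then use Lemma \ref{lemma3.4pipsin} to replace $|\omega|^2$ by a multiple of $|\nabla A|^2$. Rearranging gives
$$\left[1 + \frac{9}{(m+1)(m+2)}\left(\frac{2}{(m+2)\eta}-\frac{m}{m-1}\right)\right]|\nabla A|^2 \geq \left(\frac{3}{m+2}-\eta\right)|\nabla H|^2,$$
and the lemma reduces to showing that $\eta$ can be chosen so that the ratio of the two bracketed quantities is at least $\frac{16}{9(m+2)}$.

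The remaining step is a one-variable optimisation. The natural rescaling $\eta = x/(m+2)$ for $x \in (0,3)$ turns both sides into rational functions of $m$ and $x$, and the desired inequality becomes a polynomial comparison in $m$ and $x$ which I would verify by selecting a fixed convenient $x$ (say tuned so that $3-x$ is a clean fraction and the absorption factor is close to $1$ for large $m$) and invoking the lower bound on $m$ coming from the hypothesis of Theorem \ref{ThCPn}. Note that the target $\frac{16}{9(m+2)}$ is strictly smaller than the trivial $\omega=0$ constant $\frac{3}{m+2}$ appearing in \eqref{eq3.9pipsin}, so there is genuine slack to be distributed between the $|\omega|^2$ absorption and the $|\nabla H|^2$ coefficient.

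The main obstacle is this algebraic optimisation: producing exactly the constant $\frac{16}{9(m+2)}$ requires matching the choice of $\eta$ precisely to the dimension bound assumed in Theorem \ref{ThCPn}, and one must check that the coefficient $1 + \frac{9}{(m+1)(m+2)}\bigl(\frac{2}{(m+2)\eta}-\frac{m}{m-1}\bigr)$ remains under control. In the hypersurface case $k=1$ no such work is needed: since $\mathbb{C}P^n$ is Einstein, $\omega$ vanishes identically and Lemma \ref{lemma3.3pipsin} with $\eta\to 0$ already yields the stronger estimate $|\nabla A|^2 \geq \tfrac{3}{m+2}|\nabla H|^2 > \tfrac{16}{9(m+2)}|\nabla H|^2$, so the absorption argument is genuinely required only when $k\geq 2$.
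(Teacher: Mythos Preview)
Your plan is correct and is precisely the argument the paper intends: the sentence ``From the previous result we obtain an estimate similar to \eqref{eq3.9pipsin} for general codimension'' is exactly the instruction to feed Lemma \ref{lemma3.4pipsin} into Lemma \ref{lemma3.3pipsin} and absorb the $|\omega|^2$ term, which is what you do. For the step you flag as the obstacle, the choice $\eta=\tfrac{1}{m+2}$ already works: it gives numerator $\tfrac{2}{m+2}$ and absorption factor $1+\tfrac{9(m-2)}{(m-1)(m+1)(m+2)}$, and the required inequality $(m-1)(m+1)(m+2)\geq 72(m-2)$ holds for all $m\geq 7$, well inside the dimension range of Theorem \ref{ThCPn}.
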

We are now ready to prove the preservation of the quadratic pinching condition under the flow. We seperate the cases of codimension one and higher codimension.
\begin{proposition}[cf.\cite{PipSin}, Proposition 3.6] \label{preservationhypersurface}
Let $\mathcal{M}_0$ be a closed hypersurface of $\mathbb{C P}^n$, with $n \geq 3$. Let $\tilde{k}\ge 1$. Then, the pinching condition
	\begin{align*}
|A|^2 \leq \frac{1}{m-\tilde{k}}|H|^2+2\tilde{k}
	\end{align*}
is preserved by the mean curvature flow for every $t\in[0,T)$.
\end{proposition}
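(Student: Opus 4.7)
The plan is to apply the parabolic maximum principle to the function
\[
f := |A|^2 - \frac{1}{m-\tilde{k}}|H|^2 - 2\tilde{k},
\]
and show that $f \leq 0$ is preserved along the flow. Because the codimension is one, we can use the simplified evolution equations of Lemma \ref{lemma2.2pipsin}, which combine to give
\[
(\partial_t - \Delta) f = -2\Bigl(|\nabla A|^2 - \tfrac{1}{m-\tilde{k}}|\nabla H|^2\Bigr) + 2\Bigl(|A|^2 - \tfrac{|H|^2}{m-\tilde{k}}\Bigr)\bigl(|A|^2 + \bar{R}ic(\nu,\nu)\bigr) - 4\mathcal{R},
\]
where $\mathcal{R} := \sum_{i,j,p,l}(h_{ij}h_j{}^p \bar{R}_{plil} - h^{ij}h^{lp}\bar{R}_{pilj})$. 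Since $\mathbb{C}P^n$ is Einstein with Einstein constant $2(n+1)$, the term $\bar{R}ic(\nu,\nu) = 2(n+1)$ is a constant.

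First I would handle the gradient terms. Since the ambient space is Einstein and the codimension is one, the obstruction tensor $\omega$ in Lemma \ref{lemma3.3pipsin} vanishes, and inequality \eqref{eq3.9pipsin} gives $|\nabla A|^2 \geq \frac{3}{m+2}|\nabla H|^2$. The assumption $m \geq \frac{18+16\tilde{k}}{7}$ (in fact even the weaker $m \geq \frac{3\tilde{k}+2}{2}$) ensures $\frac{3}{m+2} \geq \frac{1}{m-\tilde{k}}$, so the combined gradient contribution $-2(|\nabla A|^2 - \frac{1}{m-\tilde{k}}|\nabla H|^2)$ is non-positive and, by keeping a strict surplus, provides a genuinely negative quantity proportional to $|\nabla H|^2$ that will absorb the ambient reaction.

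Next I would evaluate the reaction terms at a putative maximum point where $f=0$, so that $|A|^2 = \frac{|H|^2}{m-\tilde{k}} + 2\tilde{k}$. The leading quadratic reaction becomes
\[
2\Bigl(|A|^2 - \tfrac{|H|^2}{m-\tilde{k}}\Bigr)\bigl(|A|^2 + 2(n+1)\bigr) = 4\tilde{k}\bigl(|A|^2 + 2(n+1)\bigr) = \tfrac{4\tilde{k}}{m-\tilde{k}}|H|^2 + 8\tilde{k}^2 + 8\tilde{k}(n+1).
\]
The remaining task is to control $\mathcal{R}$. Using the explicit curvature tensor \eqref{RinCPn} and working in a (B2) basis adapted to the complex structure $J$, one can expand the Riemann components $\bar{R}_{plil}$ and $\bar{R}_{pilj}$ in terms of the metric and the $J$-components of the tangent/normal vectors. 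Inserting these into $\mathcal{R}$ and applying Berger's inequalities \eqref{Berger} together with the uniform bound $1 \leq \bar{K} \leq 4$, I would obtain an estimate of the form
\[
-4\mathcal{R} \leq C_1 |H|^2 + C_2,
\]
for explicit constants $C_1, C_2$ depending only on $m$, $n$ and $\tilde{k}$.

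Combining everything, the evolution of $f$ at a maximum point becomes
\[
(\partial_t-\Delta)f \;\leq\; -2\Bigl(\tfrac{3}{m+2} - \tfrac{1}{m-\tilde{k}}\Bigr)|\nabla H|^2 + \Bigl(\tfrac{4\tilde{k}}{m-\tilde{k}} + C_1\Bigr)|H|^2 + \bigl(8\tilde{k}^2 + 8\tilde{k}(n+1) + C_2\bigr),
\]
and the hard part will be showing that the $|H|^2$ and constant terms are actually non-positive (or can be absorbed). This is where the explicit threshold $m \geq \frac{18+16\tilde{k}}{7}$ enters: it is chosen precisely so that the coefficient in front of $|H|^2$ becomes non-positive, while the constant term is handled by the additive $2\tilde{k}$ in the pinching. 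The maximum principle then yields $f \leq 0$ throughout $[0,T)$. The genuinely delicate step is the last one, since unlike the Euclidean setting the constant $C_1$ picks up the contribution of the symmetric ambient curvature; verifying the sign requires the sharp algebraic identities for $\mathbb{C}P^n$ rather than a mere estimate through Berger's inequality.
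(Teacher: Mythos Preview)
Your proposal has a genuine gap in the treatment of the curvature term $\mathcal{R}$. You plan to bound $-4\mathcal{R}$ from above by a positive quantity $C_1|H|^2 + C_2$ using Berger's inequalities, and then hope that the dimensional threshold makes the resulting $|H|^2$ coefficient non-positive. This cannot work: the term $2d(|A|^2+\bar r)$ with $d=2\tilde k$ already contributes $\frac{4\tilde k}{m-\tilde k}|H|^2$, and no choice of $m$ makes this vanish, so you must extract a \emph{negative} contribution from $\mathcal{R}$ to cancel it. Your remark that the gradient surplus ``absorbs the ambient reaction'' is also incorrect; a term proportional to $|\nabla H|^2$ cannot compensate for a zeroth-order term proportional to $|H|^2$.

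The paper's key observation is that $-4\mathcal{R}$ is in fact the \emph{good} term. Diagonalising $A$ with eigenvalues $\lambda_1,\dots,\lambda_m$, one computes directly
\[
-4\mathcal{R} = -2\sum_{j,l}(\lambda_j-\lambda_l)^2\,\bar K_{jl} \;\leq\; -2\sum_{j,l}(\lambda_j-\lambda_l)^2 \;=\; -4m|\mathring A|^2,
\]
using only the lower bound $\bar K_{jl}\geq 1$ (not Berger's inequality, which gives the wrong sign here). This negative $-4m|\mathring A|^2$ then combines with the bad term via the identity
\[
2d(|A|^2+\bar r)-4m\Bigl(|A|^2-\tfrac{1}{m}|H|^2\Bigr)=-\tfrac{4}{c}\Bigl(|A|^2-c|H|^2-\tfrac{c}{2}d\bar r\Bigr)\leq -\tfrac{4}{c}Z,
\]
the last inequality holding because $\bar r=m+3\leq 2/c$. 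The maximum principle then closes. Your setup through the evolution equation and the gradient estimate is correct; the error is purely in estimating $\mathcal{R}$ with the wrong sign.
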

\begin{proof}
Let us set $Z=|A|^2-c|H|^2-d$ with $c=\frac{1}{m-\tilde{k}}$ and $d=2\tilde{k}$. Lemma \ref{lemma2.2pipsin} gives
	\begin{align}\label{eq3.16pipsin}
\left(\partial_t-\Delta\right) Z= & -2\left(|\nabla A|^2-c|\nabla H|^2\right)+2\left(|A|^2-c|H|^2\right)\left(|A|^2+\bar{r}\right) \nonumber\\
& -4\left(h_{i j} h_j^p \bar{R}_{p l i}{ }^l-h^{i j} h^{l p} \bar{R}_{p i l j}\right) \nonumber\\
= & -2\left(|\nabla A|^2-c|\nabla H|^2\right)+2 Z\left(|A|^2+\bar{r}\right)+2 d\left(|A|^2+\bar{r}\right) \nonumber\\
& -4\left(h_{i j} h_j{ }^p \bar{R}_{p l i}{ }^l-h^{i j} h^{l p} \bar{R}_{p i l j}\right),
	\end{align}
where
	\begin{align*}
\bar{r}=\bar{R} i c(\nu, \nu)=2(n+1) .
	\end{align*}
By \eqref{eq3.9pipsin} the gradient terms in \eqref{eq3.16pipsin} are non-positive. Indeed, for $\tilde{k}\ge 1$ we need at least $m\ge \frac{18+16\tilde{k}}{7}$. Also, this inequality provides a condition on how large $\tilde{k}$ can get. So, it suffices to consider the reaction terms. Fix an orthonormal basis tangent to $\mathcal{M}_t$, which diagonalizes the second fundamental form and denote $\lambda_1 \leq \lambda_2 \leq \cdots \leq \lambda_m$ its eigenvalues. Recalling that the sectional curvature $\bar{K}_{i j}$, for any $i,j$ satisfies $\bar{K}_{i j} \geq 1$, we have
	\begin{align*}
-4\left(h_{i j} h_j^p \bar{R}_{p l i}{ }^l-h^{i j} h^{l p} \bar{R}_{p i l j}\right) & =-4\left(\lambda_j^2 \delta_{i j} \delta_{j p} \bar{R}_{p l i l}-\lambda_j \lambda_l \delta_{i j} \delta_{l p} \bar{R}_{p i l j}\right) \\
& =-4 \sum_{j, l}\left(\lambda_j^2-\lambda_j \lambda_l\right) \bar{R}_{j l j l} \\
& =-2 \sum_{j, l}\left(\lambda_j-\lambda_l\right)^2 \bar{K}_{j l} \\
& \leq-2 \sum_{j, l}\left(\lambda_j-\lambda_l\right)^2\\
&=-4 m|\mathring{A}|^2.
	\end{align*}
Since $2 / c \geq 2 m-2 \geq m+3=\bar{r}$, we have
	\begin{align*}
2 d\left(|A|^2+\bar{r}\right)-4 m\left(|A|^2-\frac{1}{m}|H|^2\right)=-\frac{4}{c}\left(|A|^2-c|H|^2-\frac{c}{2} d \bar{r}\right) \leq-\frac{4}{c}Z.
	\end{align*}
By the maximum principle, we get the desired result.
\end{proof}
In the case of higher codimension, the preservation of the quadratic pinching is derived from the following theorem.
\begin{theorem}[cf.\cite{PipSin}, Proposition 3.7]\label{ThCPn}
Let $\mathcal{M}_0$ be a closed submanifold of $\mathbb{C P}^n$ of dimension $m$ and codimension $2 \leq k<\frac{m-3}{4}$. Let $\tilde{k}\ge 1$ and $m\ge4\tilde{k}$. Then, the pinching condition
	\begin{align}\label{preservedcondition}
	|A|^2 \leq \frac{1}{m-\tilde{k}}|H|^2+\frac{\tilde{k}}{m}(m-3-4k)
	\end{align}
is preserved by the flow for every $t\in[0,T)$.
\end{theorem}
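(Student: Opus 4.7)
The plan is to set $Q := |A|^2 - c|H|^2 - d$ with $c := 1/(m-\tilde{k})$ and $d := \tilde{k}(m-3-4k)/m$, and show that $\{Q \le 0\}$ is preserved along the flow by the parabolic maximum principle. Using \eqref{eqn_|A|^2} and \eqref{eqn_|H|^2} one obtains
\[
(\partial_t-\Delta)Q = -2\bigl(|\nabla A|^2 - c|\nabla H|^2\bigr) + 2(R_1 - cR_2) + \mathcal R(\bar R, A, H),
\]
where $\mathcal R$ collects every term containing the ambient curvature tensor $\bar R$ of $\mathbb{C}P^n$. The Kato-type inequality of Lemma \ref{katoinequality} already disposes of the gradient combination: since $|\nabla A|^2 \ge 16|\nabla H|^2/(9(m+2))$, the condition $c \le 16/(9(m+2))$ (i.e.\ $m \ge (18+16\tilde k)/7$) is enough for non-positivity, and this is implied by the stated dimension bound.

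At any point where $Q = 0$, the reaction part is estimated by the second inequality of Lemma \ref{lemma3.2}. With the present $c$ one has $mc-1 = \tilde{k}/(m-\tilde{k})$, so $6 - 2/(mc-1) = 2(4\tilde{k}-m)/\tilde{k} \le 0$ since $m \ge 4\tilde{k}$ is assumed, making both the cross-term $|\mathring{h}|^2|\mathring{A}^-|^2$ and the $|\mathring{A}^-|^4$ contribution non-positive. Substituting the value of $d$ yields
\[
2R_1 - 2cR_2 \le 2(m-3-4k)|\mathring{h}|^2 + \tfrac{4(m-\tilde{k})(m-3-4k)}{m}|\mathring{A}^-|^2 - \tfrac{2\tilde{k}(m-\tilde{k})(m-3-4k)^2}{m^2}.
\]

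For $\mathcal R$ I would work in a basis of type (B2) and exploit the explicit form \eqref{RinCPn} together with the Kähler identities \eqref{basesJ}, \eqref{basesJ2}, and \eqref{eq3.4pipsin}. Berger's inequality \eqref{Berger}, the Einstein property of $\mathbb{C}P^n$ with Einstein constant $2(n+1)$, and the observation that $J$ couples nontrivially only to the first $2k$ tangent directions in a (B2) basis reduce $\mathcal R$ to an expression polynomial in $|H|$, $|\mathring h|$, $|\mathring A^-|$ with coefficients explicit in $m$, $k$, $\tilde{k}$. The precise shape of $d$ is engineered so that the positive $|\mathring h|^2$ and $|\mathring A^-|^2$ terms coming from the estimate above are absorbed by the negative contributions that $\mathcal R$ produces once one substitutes $|A|^2 = c|H|^2 + d$ on $\{Q = 0\}$, together with the gradient slack left over from Lemma \ref{katoinequality}.

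Collecting everything and using $|A|^2 = c|H|^2 + d$, the inequality $(\partial_t-\Delta)Q \le 0$ on $\{Q = 0\}$ reduces to a polynomial condition whose leading behaviour in $m$ is quadratic. The restriction $k < (m-3)/4$ ensures $d > 0$ and keeps the structural constants in the right range, while the stated bound $m > (43\tilde{k}+18+\sqrt{1849\tilde{k}^2+3060\tilde{k}+324})/14$ is precisely the condition that the larger root of the associated quadratic $7m^2 - (43\tilde{k}+18)m - 54\tilde k > 0$ is strictly exceeded. Hamilton's maximum principle for tensors then closes the argument. The main obstacle is the bookkeeping required to pass between the (B1) frame underlying Lemma \ref{lemma3.2} and the (B2) frame underlying the $\mathbb{C}P^n$-curvature computation; since these two frames are generically incompatible, the Kähler-induced cross terms in $\mathcal R$ must be re-expressed carefully in terms of the pinching quantities $|\mathring h|$, $|\mathring A^-|$, $|H|$ before the algebraic inequality can be verified.
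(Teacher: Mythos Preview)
Your overall architecture matches the paper's: define $Q=|A|^2-c|H|^2-d$, use Lemma~\ref{katoinequality} for the gradient terms, Lemma~\ref{lemma3.2} for $2R_1-2cR_2$, and a (B2)-frame computation for the ambient-curvature block. Two points, however, are off.

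First, the substance of the proof is the estimate on $\mathcal R$ (the paper's $P_\alpha$), and you only gesture at it. The paper splits $P_\alpha=I+II+III$ and, by diagonalising each $A^\alpha$ for $I$, using the (B2) frame for $II$, and a lengthy explicit expansion of $\bar R_{jp\alpha\beta}$ for $III$, arrives at the clean bound $P_\alpha\le -2(m-3-4k)|\mathring A|^2$. This is the step that makes the $|\mathring h|^2$ coefficient in your displayed inequality vanish \emph{exactly}; without it there is nothing that kills the $2(m-3-4k)|\mathring h|^2$ term, since $|\mathring h|$ is not controlled by $|\mathring A^-|$ or the constant. Your sentence ``the precise shape of $d$ is engineered so that \dots'' is correct in spirit, but the mechanism is this specific cancellation, and it requires actually producing the bound $III\le 8k|\mathring A|^2$ in the (B2) frame, which is the bulk of the work.

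Second, you misidentify the closing algebraic condition. The quadratic $7m^2-(43\tilde k+18)m-54\tilde k>0$ belongs to the \emph{codimension estimate} (Lemma~\ref{74.9} in Section~5), not to this theorem. Once $P_\alpha\le -2(m-3-4k)|\mathring A|^2$ is in hand, combining with Lemma~\ref{lemma3.2} at $Q=0$ and using $|\mathring A|^2\ge d$ together with the retained $-3|\mathring A^-|^4$ term (which you discarded too early) gives $R\le 2d\bigl(\tfrac{5}{3}d-(m-3-4k)\bigr)$; this is negative precisely when $m>\tfrac{5}{3}\tilde k$, so the hypothesis $m\ge 4\tilde k$ alone suffices. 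No quadratic in $m$ appears here, and no ``gradient slack'' from Lemma~\ref{katoinequality} is used for the reaction terms. Finally, $Q$ is scalar, so the ordinary parabolic maximum principle is enough; Hamilton's tensor version is not needed.
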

\begin{proof}
Let us set $g=|A|^2-c_m|H|^2-d_m$, where
	\begin{align*}
c_m=\frac{1}{m-\tilde{k}}, \quad d_m=\frac{\tilde{k}}{m}(m-3-4k) .
	\end{align*}
More precisely, by Lemma \ref{eqn_AHCP}, we have
	\begin{align}\label{eqn_g}
	\partial_t g=\Delta g-2(|\nabla A|^2-c_m|\nabla H|^2)+2R_1-2c_m R_2+P_\alpha,
	\end{align}	
where
	\begin{align*}
	R_1=\sum_{\alpha,\beta}\big(\sum_{i,j} A^\alpha_{ij} A_{ij}^\beta \big)^2+\sum_{i,j,\alpha,\beta}\Big(\sum_p\big(A^\alpha_{ip}A_{jp}^\beta -A^\alpha_{jp}A_{ip}^\beta \big)\Big)^2,
	\end{align*}	\begin{align*}
	R_2=\sum_{i,j}\big(\sum_{\alpha} H^\alpha A^\alpha_{ij}\big)^2
	\end{align*}
and
	\begin{align}\label{7eqn_P_a}
	P_\alpha=I+II+III,
	\end{align}with
	\begin{align*}
	I=4\sum_{i,j,p,q}\bar{R}_{ipjq}\big(\sum_{\alpha} A^\alpha_{pq}A^\alpha_{ij}\big)-4\sum_{j,k,p}\bar{R}_{kjkp}\big(\sum_{i,\alpha} A^\alpha_{pi}A^\alpha_{ij}\big),
	\end{align*}	\begin{align*}
	II=2\sum_{k,\alpha,\beta}\bar{R}_{k\alpha k\beta}\big(\sum_{i,j} A^\alpha_{ij}A_{ij}^\beta \big)-2c_m\sum_{k,\alpha,\beta}\bar{R}_{k\alpha k\beta} H^\alpha H^\beta,
	\end{align*}
	\begin{align*}
	III=-8\sum_{j,p,\alpha,\beta}\bar{R}_{jp\alpha\beta}\big(\sum_iA^\alpha_{ip}A_{ij}^\beta \big).
	\end{align*}
By Lemma \ref{katoinequality} the gradient terms in equation \eqref{eqn_g} are non-positive Indeed, for $\tilde{k}=1$, we need at least $m\ge 12$ and for $\tilde{k}\ge 2$ we need $m>\frac{43\tilde{k}+18+\sqrt{1849\tilde{k}^2+3060\tilde{k}+324}}{14}$, which we get from Lemma \ref{74.9} and it implies $m\ge \frac{18+16\tilde{k}}{7}$, which can be proved using the Kato type inequality in Lemma \ref{katoinequality}. Also, this inequality provides a condition on how large $\tilde{k}$ can get. So, it suffices to consider the reaction terms. We will treat the cases $H=0$ and $H \neq 0$. Consider a point where $g=0$ and $H \neq 0$. To estimate $I$, we first fix $\alpha$ and then we choose a tangent basis $\left\{\widetilde{e}_1, \ldots, \widetilde{e}_m\right\}$, not necessarily of kind (B1) or (B2), that diagonalizes the entries of $A^+$, i.e. $A_{i j}^\alpha=\lambda_i^\alpha \delta_{i j}$.
	\begin{align*}
	I&=4\sum_{i,j,p,q}\bar{R}_{ipjq} A^\alpha_{pq}A^\alpha_{ij}-4\sum_{j,k,p}\bar{R}_{kjkp}\big(\sum_{i,\alpha} A^\alpha_{pi}A^\alpha_{ij}\big)\\
	&=4\sum_{i,p}\bar{R}_{ipip}\big(\lambda^\alpha_i\lambda^\alpha_p-(\lambda^\alpha_i)^2\big)\\
	&=-2\sum_{i,p}\bar{K}_{ip}\big(\lambda^\alpha_i-\lambda^\alpha_p\big)^2\\
	&\le-4m|\mathring{A}^\alpha|^2.
	\end{align*}Hence, we get
	\begin{align}\label{7eqn_I}
	I\le-4m(|\mathring{h}|^2+|\mathring{A}^-|^2).
	\end{align}
We will use a basis of type (B2) for estimating the terms II and III. In order to study the term II, with our choice of the basis, from \eqref{RinCPn}, we have that $\bar{R}_{sas\beta}=0$ for any $s$ if $\alpha \neq \beta$. Otherwise, we have
	\begin{align*}
	\bar{R}_{s a s a}=\bar{K}_{s \alpha}=1+3 g_{F S}\left(e_s, J e_\alpha\right)^2,
	\end{align*}which implies that $1 \leq \bar{K}_{s a} \leq 1+3 \delta_{s, \alpha-m}$. Therefore, since $c_m \ge\frac{1}{m}$, we have
	\begin{align}\label{7eqn_II}
	II&=2 \sum_{s, \alpha} \bar{K}_{s \alpha}\left(\left|A^\alpha\right|^2-c_m\left|H^\alpha\right|^2\right)\nonumber\\
	&=2 \sum_{s, \alpha} \bar{K}_{s \alpha}\left(|\mathring{A}^\alpha|^2-\left(c_m-\frac{1}{m}\right)\left|H^\alpha\right|^2\right)\nonumber \\
	&\leq 2 \sum_{s, \alpha}\left(1+3 \delta_{s, \alpha-m}\right)|\mathring{A}^\alpha|^2\nonumber \\
	&=2(m+3)|\mathring{A}|^2.
	\end{align}
For the term III, since $\bar{R}_{j p \alpha \beta} $ is anti-symmetric in $j, p$, while $A_{j p}^\alpha$ is symmetric, we have
	\begin{align*}
	I I I=-8 \sum_{j, p, a, \beta} \bar{R}_{j p \alpha \beta}\Big(\sum_i A_{i p}^\alpha A_{i j}^\beta\Big)=-8 \sum_{j, p, \alpha, \beta} \bar{R}_{j p a \beta}\Big(\sum_i \mathring{A}_{i p}^\alpha \mathring{A}_{i j}^\beta\Big).
	\end{align*}
We examine the potential values of of $\bar{R}_{j p a \beta}$. First fix $\alpha$ and $\beta$ coupled by \eqref{basesJ}, meaning that $\min \{\alpha, \beta\}=m+2 r-1$ and $\max \{\alpha, \beta\}=m+2 r$ for some $r \leq k / 2$. By symmetry, it suffices to consider the case where $\alpha<\beta$. We find
	\begin{align*}
	\bar{R}_{j p a \beta}=\tau_r^2\left(\delta_{j, 2 r-1} \delta_{p, 2 r}-\delta_{j, 2 r} \delta_{p, 2 r-1}\right)-2 \nu_r g_{F S}\left(e_j, J e_p\right),
	\end{align*}and
	\begin{align*}
g_{F S}\left(e_j, J e_p\right)=\left\{\begin{array}{llll}
-\nu_s&\text{ if } j=2 s,&p=2 s-1,&s \leq \frac{k}{2} \\
	\nu_s&\text{ if } j=2 s-1,&p=2 s,&s \leq \frac{k}{2} \\
	1&\text{ if } j=k+2 s,&p=k+2 s-1,&s \leq \frac{m-k}{2} \\
	-1&\text{ if } j=k+2 s-1,&p=k+2 s,&s \leq \frac{m-k}{2} \\
	0&\text{ otherwise. }&&
\end{array}\right.
	\end{align*}If $\alpha$ and $\beta$ aren't coupled by the system \eqref{basesJ}, there are two indices $r \neq s$ such that $\alpha$ is (or is coupled with) $e_{m+2 r-1}$ and $\beta$ is (or is coupled with) $e_{m+2 s-1}$. In this situation, we have
	\begin{align*}
	\bar{R}_{jpa \beta}=\tau_r \tau_s\left(\delta_{j, \alpha-m} \delta_{p, \beta-m}-\delta_{j, \beta-m} \delta_{p, \alpha-m}\right).
	\end{align*}Using this and by summing all similar terms, we arrive at
	\begin{align*}
	&I I I=16 \sum_r\left(2 \nu_r^2-\tau_r^2\right) \sum_i\left(\mathring{A}_{i\ 2 r}^{m+2 r-1} \mathring{A}_{i\ 2 r-1}^{m+2 r}-\mathring{A}_{i\ 2 r-1}^{m+2 r-1} \mathring{A}_{i\ 2 r}^{m+2 r}\right) \\
	&-8 \sum_{r=s<\frac{k}{2}} \tau_r \tau_s \sum_i\left(\mathring{A}_{i\ 2 s}^{m+2 r} \mathring{A}_{i\ 2 r}^{m+2 s}-\mathring{A}_{i\ 2 r}^{m+2 r} \mathring{A}_{i\ 2 s}^{m+2 s}\right) \\
	&-16 \sum_{r \neq s, r \leq \frac{k}{2}, s \leq \frac{k+1}{2}} \tau_r \tau_s \sum_i\left(\mathring{A}_{i\ 2 s-1}^{m+2 r} \mathring{A}_{i\ 2 r}^{m+2 s-1}-\mathring{A}_{i\ 2 r}^{m+2 r} \mathring{A}_{i\ 2s-1}^{m+2 s-1}\right) \\
	&-8 \sum_{r \neq s \leq \frac{k+1}{2}} \tau_r \tau_s \sum_i\left(\mathring{A}_{i\ 2 s-1}^{m+2 r-1} \mathring{A}_{i\ 2r-1}^{m+2 s-1}-\mathring{A}_{i\ 2 r-1}^{m+2 r-1} \mathring{A}_{i\ 2 s-1}^{m+2 s-1}\right) \\
	&+32 \sum_{r\neq s\leq \frac{k}{2}} \nu_r \nu_s \sum_i\left(\mathring{A}_{i\ 2 s}^{m+2 r-1} \mathring{A}_{i\ 2s-1}^{m+2 r}-\mathring{A}_{i\ 2 s-1}^{m+2r-1} \mathring{A}_{i\ 2 s}^{m+2 r}\right) \\
	&+32 \sum_{r \leq \frac{k}{2}} \nu_r \sum_{s \leq \frac{m-k}{2}} \sum_i\left(\mathring{A}_{i\ k+2 s-1}^{m+2 r-1} \mathring{A}_{i\ k+2 s}^{m+2 r}-\mathring{A}_{i\ k+2 s}^{m+2 r-1} \mathring{A}_{i\ k+2 s-1}^{m+2 r}\right). \\
	\end{align*}
Note that $I I I \leq|I I I|$. Using the triangle inequality and Young's inequality repeatedly and considering that for any $r$ and $s$
	\begin{align*}
\left\{\begin{array}{l}
\left|2 \nu_r^2-\tau_r^2\right| \leq 2, \\
	\left|\tau_r \tau_s\right| \leq 1, \\
	\left|\nu_r \nu_s\right| \leq 1, \\
	\left|\nu_r\right| \leq 1,
\end{array}\right.
	\end{align*}
we get
	\begin{align*}
	&I I I \leq 16 \sum_{r \leq \frac{k}{2}} \sum_i\left(\left|\mathring{A}_{i\ 2 r}^{m+2 r-1}\right|^2+\left|\mathring{A}_{i\ 2 r-1}^{m+2 r}\right|^2+\left|\mathring{A}_{i\ 2 r-1}^{m+2 r-1}\right|^2+\left|\mathring{A}_{i\ 2 r}^{m+2 r}\right|^2\right) \\
	&+4 \sum_{r \neq s \leq \frac{k}{2}} \sum_i\left(\left|\mathring{A}_{i\ 2 s}^{m+2 r}\right|^2+\left|\mathring{A}_{i\ 2 r}^{m+2 s}\right|^2+\left|\mathring{A}_{i\ 2 r}^{m+2 r}\right|^2+\left|\mathring{A}_{i\ 2 s}^{m+2 s}\right|^2\right) \\
	&+8 \sum_{r \neq s, r \leq \frac{k}{2}, s \leq \frac{k+1}{2}} \sum_i\left(\left|\mathring{A}_{i\ 2 s-1}^{m+2 r}\right|^2+\left|\mathring{A}_{i\ 2 r}^{m+2 s-1}\right|^2+\left|\mathring{A}_{i\ 2 r}^{m+2 r}\right|^2+\left|\mathring{A}_{i\ 2 s-1}^{m+2 s-1}\right|^2\right) \\
	&+4 \sum_{r \neq s \leq \frac{k+1}{2}} \sum_i\left(\left|\mathring{A}_{i\ 2 s-1}^{m+2 r-1}\right|^2+\left|\mathring{A}_{i\ 2r-1}^{m+2 s-1}\right|^2+\left|\mathring{A}_{i\ 2 r-1}^{m+2 r-1}\right|^2+\left|\mathring{A}_{i\ 2 s-1}^{m+2 s-1}\right|^2\right) \\
	&+16\sum_{r\neq s\le\frac{k}{2}}\sum_i \left(\left|\mathring{A}^{m+2r-1}_{i\ 2s}\right|^2+\left|\mathring{A}^{m+2r}_{i\ 2s-1}\right|^2+\left|\mathring{A}^{m+2r-1}_{i\ 2s-1}\right|^2+\left|\mathring{A}^{m+2r}_{i\ 2s}\right|^2\right)\\
	&+16 \sum_{r \leq \frac{k}{2}, s \leq \frac{m-k}{2}} \sum_i\left(\left|\mathring{A}_{i\ k+2 s-1}^{m+2 r-1}\right|^2+\left|\mathring{A}_{i\ k+2 s}^{m+2 r}\right|^2+\left|\mathring{A}_{i\ k+2 s}^{m+2 r-1}\right|^2+\left|\mathring{A}_{i\ k+2 s-1}^{m+2 r}\right|^2\right).
	\end{align*}
Note that, if codimension $k=2$, there are no indices $r \neq s \leq \frac{k+1}{2}$. Then, some of the sums in the expressions above are empty and so
	\begin{align*}
	I I I \leq 16|\AA|^2.
	\end{align*}If $k>2$, by collecting similar terms we find
	\begin{align*}
&I I I \leq \sum_{i, r}\left(16\left|\mathring{A}_{i\ 2 r}^{m+2 r-1}\right|^2+16\left|\mathring{A}_{i\ 2 r-1}^{m+2 r}\right|^2+8 k\left|\mathring{A}_{i\ 2 r}^{m+2 r}\right|^2+8 k\left|\mathring{A}_{i\ 2 r-1}^{m+2 r-1}\right|^2\right) \\
	&+24 \sum_{i, r \neq s \leq \frac{k}{2}}\left(\left|\mathring{A}_{i\ 2 s}^{m+2 r}\right|^2+\left|\mathring{A}_{i\ 2 s}^{m+2 r-1}\right|^2+\left|\mathring{A}_{i\ 2 s-1}^{m+2 r}\right|^2+\left|\mathring{A}_{i\ 2 s-1}^{m+2 r-1}\right|^2\right) \\
	&+16 \sum_{i, r, s \leq \frac{m-k}{2}}\left(\left|\mathring{A}_{i\ k+2 s-1}^{m+2 r-1}\right|^2+\left|\mathring{A}_{i\ k+2 s}^{m+2 r}\right|^2+\left|\mathring{A}_{i\ k+2 s}^{m+2 r-1}\right|^2+\left|\mathring{A}_{i\ k+2 s-1}^{m+2 r}\right|^2\right) \\
	&\leq 8 k|\mathring{A}|^2.
	\end{align*}So we can say that in any case
	\begin{align}\label{7eqn_III}
	I I I \leq 8 k|\mathring{A}|^2.
	\end{align}
In conclusion, by \eqref{7eqn_I}, \eqref{7eqn_II} and \eqref{7eqn_III} we get
	\begin{align}\label{P_aestimate}
	P_\alpha=I+I I+I I I \leq-2(m-3-4 k)|\mathring{A}|^2.
	\end{align}
Let $R=2 R_1-2 c_m R_2+P_\alpha$. Considering a frame of type (B1), Lemma \eqref{lemma3.2} says that at any point with $g=0$, we get
	\begin{align*}
	R&=\left(6-\frac{2}{m c_m-1}\right)|\mathring{A}|^2 |\mathring{A}^-|^2+\left(\frac{2 m c_md_m }{m c_m-1}-2(m-3-4 k)\right) |\mathring{h}|^2-3|\mathring{A}^-|^4 \\
	&+\left(\frac{4 d_m}{m c_m-1}-2(m-3-4 k)\right)|\mathring{A}^-|^2-\frac{2 d_m^2}{m c_m-1}.
	\end{align*}
Note that, for our choice of $c_m$ and $d_m$ the coefficient of the term $|\mathring{A}|^2|\mathring{A}^-|^2$ is negative, since $m\ge4\tilde{k}$, while the coefficient of the term $|\mathring{h}|^2$ is zero. Moreover, the assumptions $g=0$ and $c_m>1 / \mathrm{m}$, imply that $|\mathring{A}|^2 \geq d_m$. Using this, we have
	\begin{align*}
	&R \leq-3 |\mathring{A}^-|^4+\left(\left(6-\frac{2}{m c_m-1}\right) d_m+\frac{4 d_m}{m c_m-1}-2(m-3-4 k)\right) |\mathring{A}^-|^2-\frac{2 d_m^2}{m c_m-1} \\
	&=-3| \mathring{A}^-|^4+4 d_m |\mathring{A}^-|^2+2 d_m(d_m-m+3+4 k).
	\end{align*}Using $4 d_m|\mathring{A}^-|^2 \leq 3|\mathring{A}^-|^4+\frac{4}{3} d_m^2$, we conclude that
	\begin{align*}
	R \leq 2 d_m\left(\frac{5}{3} d_m-m+3+4 k\right).
	\end{align*}Our choice of $d_m$, implies $R<0$, since $m\ge4\tilde{k}$. Now, consider the case of a point where $g=0$ and $H=0$. In this case, we have $|A|^2=$ $|\mathring{A}|^2=d_m, R_2=0$. In addition, using Theorem 1 from \cite{Li1992}, we find $2 R_1 \leq 3|A|^4=3 d_m^2$. As before, we obtain that $P_\alpha \leq-2(m-3-4 k)|\mathring{A}|^2=-2(m-3-4 k) d_m$. Therefore,
	\begin{align*}
	R \leq 3 d_m^2-2(m-3-4 k) d_m,
	\end{align*}which is negative for the $d_m$ we chose. By the maximum principle, we get the desired result.
\end{proof}
\section{Gradient Estimate}
This section presents a proof of the gradient estimate for the mean curvature flow. We establish this estimate directly from the quadratic curvature bound $|A|^2 < c_m |H|^2 + d_m$, where $c_m \leq \frac{1}{m-\tilde{k}}$, without relying on the asymptotic cylindrical estimates. In fact, we demonstrate the cylindrical estimates follow as a consequence of the gradient estimates we derive here. These estimates are pointwise gradient estimates that rely solely on the mean curvature (or, equivalently, the second fundamental form) at a point and not on the maximum of curvature, as is the case with more general parabolic-type derivative estimates. Specifically, we obtain
\begin{align*}
\frac{16}{9(m+2)}-c_m>0.
\end{align*}
This inequality enables us to combine the derivative terms in the evolution equation of $|A|^2-c_m |H|^2-d_m$ with the Kato-type inequality from Lemma \ref{katoinequality}.
\begin{theorem}[cf.\cite{HNAV}, Theorem 4.1, cf.\cite{HuSi09}, Section 6]\label{thm_gradient}
Let $ \mathcal{M}_t , t \in [0,T)$ be a closed $m$-dimensional quadratically bounded solution to the mean curvature flow in the Riemannian manifold $\mathbb{C}P^n$, that is
	\begin{align*}
	|A|^2 - c|H|^2 -d<0, |H| >0
	\end{align*}
with $ c=\frac{1}{m-\tilde{k}}$.
 Then, there exists a constant $ \gamma_1= \gamma_1 (m, \mathcal M_0)$ and a constant $ \gamma_2 = \gamma_2 (m , \mathcal{M}_0)$, such that the flow satisfies the uniform estimate
	\begin{align*}
	|\nabla A|^2 \leq \gamma_1 |A|^4+\gamma_2,
	\end{align*}
 for every $t\in [0, T)$.
\end{theorem}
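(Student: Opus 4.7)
The proof follows the Huisken--Sinestrari maximum-principle scheme, as adapted for high codimension in \cite{HNAV}. The structural ingredient that drives the argument is the strict inequality
\begin{equation*}
c \;=\; \frac{1}{m-\tilde{k}} \;<\; \frac{16}{9(m+2)},
\end{equation*}
a direct consequence of the dimensional assumptions of the previous sections. Combined with the Kato-type bound $|\nabla A|^2 \geq \tfrac{16}{9(m+2)}|\nabla H|^2$ from Lemma \ref{katoinequality}, this yields a strictly positive gradient gain $2\kappa|\nabla A|^2$ with $\kappa = 1 - c\,\tfrac{9(m+2)}{16} > 0$ whenever one differentiates the pinching quantity $c|H|^2 + d - |A|^2$; this is precisely the reason the author singles out the inequality $\tfrac{16}{9(m+2)} - c_m > 0$ in the setup.

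First I would use the compactness of $\mathcal{M}_0$ together with the strict pinching on the initial data to fix $\sigma > 0$ so small that $f_\sigma := (c+\sigma)|H|^2 + d - |A|^2 > 0$ on $\mathcal{M}_0$; the preservation argument of Section 3 applied with $(c+\sigma, d)$ then yields $f_\sigma > 0$ on all of $\mathcal{M}\times[0,T)$, and $\kappa_\sigma := 1 - (c+\sigma)\tfrac{9(m+2)}{16}$ remains strictly positive. Using Lemma \ref{eqn_AHCP} the relevant evolution equations take the form
\begin{equation*}
(\partial_t-\Delta) f_\sigma \;\geq\; 2\kappa_\sigma\,|\nabla A|^2 + R_f,
\end{equation*}
with $R_f$ dominated by $C(|A|^2 + 1) f_\sigma + C$ via \eqref{RinCPn} and Berger's inequality \eqref{Berger}, while the Simons identity gives
\begin{equation*}
(\partial_t - \Delta)|\nabla A|^2 \;\leq\; -\,2|\nabla^2 A|^2 + C(|A|^2 + 1)|\nabla A|^2.
\end{equation*}

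Next I would introduce a test function of Huisken--Sinestrari type, namely $G = |\nabla A|^2 \cdot F_\sigma$, where $F_\sigma = f_\sigma^{-(1-\eta)}$ for a small $\eta\in(0,1)$ to be chosen (possibly multiplied by an auxiliary factor built from $|H|^2$ in order to close the bootstrap). The key computation is that at a parabolic maximum of $G$, the relation $\nabla|\nabla A|^2 = -|\nabla A|^2\,\nabla F_\sigma/F_\sigma$ turns the cross gradient term in $(\partial_t-\Delta)G$ into an absorbable correction, while the $2\kappa_\sigma|\nabla A|^2$ gain in the $f_\sigma$-evolution becomes a dominant negative contribution of the form $-c\,\kappa_\sigma\,G^2$. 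Balancing this against the positive reactions of order $C(|A|^2+1)\,G$ produces an at-maximum inequality of the form $G \le C(|A|^2+1)^{1+\eta}$, and the bootstrap argument from \cite[Section 6]{HuSi09} then promotes this into the uniform pointwise estimate $|\nabla A|^2 \le \gamma_1|A|^4 + \gamma_2$ on $\mathcal{M}\times[0,T)$, with constants depending only on $m$, $\tilde{k}$, and the initial pinching margin.

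The principal obstacle is the careful bookkeeping of the ambient $\mathbb{C}P^n$ curvature reactions in the evolution equations. Unlike in the Euclidean case, the contributions from $\bar R_{ipjq}$ and $\bar R_{k\alpha k\beta}$ in Lemma \ref{eqn_AHCP} produce both $O(|A|^2)$ and $O(1)$ pieces in the reactions rather than purely $O(|A|^4)$ ones, and this inhomogeneity is precisely the source of the additive constant $\gamma_2$ in the conclusion. Simultaneously ensuring the positivity of $\kappa_\sigma$, the preservation of $f_\sigma > 0$, and the absorbability of the cross terms at the maximum constrains the choices of $\sigma$ and $\eta$, and this is exactly where the numerical restrictions on $(m,\tilde{k})$ from Section 3 enter in an essential way.
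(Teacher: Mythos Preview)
Your proposal correctly isolates the structural driver of the estimate---the strict inequality $c<\tfrac{16}{9(m+2)}$ which, via Lemma \ref{katoinequality}, produces a positive gradient gain $2\kappa|\nabla A|^2$ in the evolution of the pinching quantity---and this is indeed the heart of the paper's argument. The implementation you describe, however, diverges from the paper's and leaves a real gap.

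The paper does not use a test function of the form $|\nabla A|^2\cdot f_\sigma^{-(1-\eta)}$ with small $\eta$, nor any bootstrap. It works directly with the quotient $\frac{|\nabla A|^2}{g^2}$ (exponent exactly $2$, where $g=c|H|^2-|A|^2+d$), computing its evolution in two passes of the quotient rule and arriving at a reaction term of the form
\[
\frac{|\nabla A|^2}{g^2}\Bigl(c|A|^2+d-\tfrac{9(m+2)\kappa_m}{8}\,\frac{|\nabla A|^2}{g}\Bigr).
\]
The closing observation---absent from your sketch---is that strict initial pinching and compactness give $g\geq\varepsilon_1|A|^2+\varepsilon_2$, hence $c|A|^2+d\leq Ng$ for a fixed constant $N$. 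Writing $u=\frac{|\nabla A|^2}{g^2}$, the reaction becomes $ug\bigl(N-\text{const}\cdot u\bigr)$, which is nonpositive as soon as $u$ exceeds $\tfrac{8N}{9(m+2)\kappa_m}$. The maximum principle then bounds $u$ uniformly, and $|\nabla A|^2\leq Cg^2\leq\gamma_1|A|^4+\gamma_2$ is immediate.

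Your at-maximum analysis, by contrast, yields only $G\leq C(|A|^2+1)^{1+\eta}$ with $|A|^2$ evaluated \emph{at the maximum point}; this is not a pointwise estimate in terms of $|A|^2$ at an arbitrary point, and the ``bootstrap'' you invoke to bridge the two is left entirely unspecified. The exponent $2$ on $g$ is precisely what makes the paper's argument close: it matches the order of the good negative term to the order of the positive reaction (once the latter is bounded by $Ng$), so that the scalar maximum principle applies to $u$ alone with no residual dependence on $|A|^2$. With your exponent $1-\eta<1$ this matching fails, and no iteration of the type you allude to appears in \cite{HuSi09} Section~6 either---that source also uses a direct quotient argument.
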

\begin{proof}
The proof is the same in the case of hypersurfaces, from Proposition \ref{preservationhypersurface}. We choose here $ \kappa_m = \left( \frac{16}{9(m+2)}-c\right)>0$. We will consider here the evolution equation for
	\begin{align*}
	\frac{|\nabla A|^2}{g^2},
	\end{align*}
where $ g = c|H|^2-|A|^2+d>0$. Since $ |A|^2-c|H|^2 < 0, |H|>0$ and $\mathcal{M}_0$ is compact, there exists an $ \eta(\mathcal{M}_0) >0, C_\eta(\mathcal{M}_0)>0$, so that
	\begin{align}\label{eqn_eta}
	\left( c-\eta \right)|H|^2-|A|^2 \geq C_\eta>0.
	\end{align}
Hence, we set
 	\begin{align*}
	g=c|H|^2-|A|^2\ge\eta|H|^2>\frac{\eta}{c}|A|^2>\e_1|A|^2+\e_2,
	\end{align*}
where $ \e_1 = \frac{\eta}{c}$ and $\e_2>0$. From  Lemma \ref{katoinequality} and Theorem \ref{ThCPn} and a suitable constant $d$, we get
	\begin{align*}
	\left(\partial_t-\Delta\right) g&=-2 \left( c|\nabla H|^2-|\nabla A|^2 \right)+2 \left( c R_2-R_1 \right)+P_{\alpha}\\
	&\geq-2 \left(\Big(\frac{16}{9(m+2)}-\eta\Big)^{-1}c-1\right) |\nabla A|^2\\
	 	&\ge-2\Big(\frac{9(m+2)}{16}c-1\Big)|\nabla A|^2\\
	&= 2\kappa_m \frac{9(m+2)}{16}| \nabla A|^2,
	\end{align*}
for a suitable positive constant $\eta$. The evolution equation for $ |\nabla A|^2 $ is given by 
 	\begin{align*}
	\left(\partial_t-\Delta\right) |\nabla A|^2&\leq-2 |\nabla^2 A|^2+c |A|^2 |\nabla A|^2+d|\nabla A|^2.
	\end{align*}
Let $w,z$ satisfy the evolution equations
	\begin{align*}
	\partial_tw = \Delta w+W , \quad \partial_tz = \Delta z+Z
	\end{align*}
 then, we find
 	\begin{align*}
	\left(\partial_t-\Delta\right)\frac{w}{z}&=\frac{2}{z}\left\la \nabla \left( \frac{w}{z}\right) , \nabla z \right\ra+\frac{W}{z}-\frac{w}{z^2} Z\\
	&=2\frac{\la \nabla w , \nabla z \ra}{z^2}-2 \frac{w|\nabla z |^2}{z^3}+\frac{W}{z}-\frac{w}{z^2} Z.
	\end{align*}
Furthermore, for any function $g$, we have by Kato's inequality
	\begin{align*}
	\la \nabla g , \nabla |\nabla A|^2 \ra&\leq 2 |\nabla g| |\nabla^2 A| |\nabla A| \leq \frac{1}{g}|\nabla g |^2 | \nabla A|^2+g |\nabla^2 A|^2.
	\end{align*}
We then get
	\begin{align*}
	-\frac{2}{g}| \nabla^2 A|^2+\frac{2}{g}\left\la \nabla g ,\nabla \left( \frac{|\nabla A|^2}{g}\right) \right\ra \leq-\frac{2}{g}| \nabla^2 A|^2-\frac{2}{g^3}|\nabla g|^2 |\nabla A|^2+\frac{2}{g^2}\la \nabla g ,\nabla |\nabla A|^2 \ra \leq 0.
	\end{align*}
Then, if we let $ w = |\nabla A|^2 $ and $ z = g$, with $W \leq-2 |\nabla^2 A|^2+c |A|^2 |\nabla A|^2+d|\nabla A|^2$ and $Z\geq 2\kappa_m \frac{9(m+2)}{16}| \nabla A|^2 $, we get
	\begin{align*}
	\left(\partial_t-\Delta\right) \frac{|\nabla A|^2}{g}&\leq \frac{2}{g}\left\la \nabla g ,\nabla \left( \frac{|\nabla A|^2}{g}\right) \right\ra+\frac{1}{g}(-2 |\nabla^2 A|^2+c |A|^2 |\nabla A|^2 \\
	&+d|\nabla A|^2)-2 \kappa_m \frac{9(m+2)}{16}\frac{|\nabla A|^4}{g^2} \\
	&\leq c |A|^2 \frac{|\nabla A|^2}{g}+d\frac{|\nabla A|^2}{g}-2 \kappa_m \frac{9(m+2)}{16}\frac{|\nabla A|^4}{g^2}.
	\end{align*}
We repeat the above computation with $w = \frac{|\nabla A|^2}{g}, z = g,$
	\begin{align*}
	W\leq c |A|^2 \frac{|\nabla A|^2}{g}+d\frac{|\nabla A|^2}{g}-2 \kappa_m \frac{9(m+2)}{16}\frac{|\nabla A|^4}{g^2}
	\end{align*}
and $ Z\geq 0$, to get
	\begin{align*}
	\left(\partial_t-\Delta\right)\frac{|\nabla A|^2}{g^2}&\leq \frac{2}{g}\left\la \nabla g ,\nabla \left( \frac{|\nabla A|^2}{g^2}\right) \right\ra \\
	&+\frac{1}{g}\left( c |A|^2 \frac{|\nabla A|^2}{g}+d\frac{|\nabla A|^2}{g}-2 \kappa_m \frac{9(m+2)}{16}\frac{|\nabla A|^4}{g^2}\right).
 	\end{align*}
The nonlinearity then is
	\begin{align*}
	\frac{|\nabla A|^2}{g^2} \left( c|A|^2+d-\frac{2 \kappa_m 9(m+2)}{16}\frac{|\nabla A|^2}{g} \right).
	\end{align*}
Since
	\begin{align*}
	g>\e_1|A|^2+\e_2,
	\end{align*}
there exists a constant $N$, such that
	\begin{align*}
	Ng\ge c|A|^2+d.
	\end{align*}
Hence, by the maximum principle, there exists a constant (with $\eta,\e_1,\e_2$ chosen sufficiently small so that N is sufficiently large, this estimate holds at the initial time), such that
	\begin{align*}
	\frac{|\nabla A|^2}{g^2}\leq \frac{8N}{ \kappa_m 9(m+2)}.
	\end{align*}
Therefore, we see there exists a constant $\mathcal{C} = \frac{8N}{ \kappa_m 9(m+2)}= \mathcal{C}(m, \mathcal{M}_0) $, such that
	\begin{align*}
	\frac{|\nabla A|^2}{g^2}\leq \mathcal{C}
	\end{align*}
and from the definition of $g$, we get the result of the lemma.
\end{proof}
\begin{theorem}[\cite{HNAV}, Theorem 4.2] Let $\mathcal{M}_t,t\in[0,T)$ be a solution of the mean curvature flow and normalised initial data. Then there exist constants $\gamma_3, \gamma_4$ depending only on the dimension, so that
	\begin{align}\label{eqn_HigherOrderGradEstimateA}
	|\nabla^2 A|^2 \leq \gamma_3|A|^6+\gamma_4,
	\end{align}
for any $t\in[0,T)$.
\end{theorem}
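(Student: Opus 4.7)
The plan is to extend the maximum-principle argument of Theorem \ref{thm_gradient} to the second derivative. Setting $g = c|H|^2 - |A|^2 + d$ with $c = \frac{1}{m-\tilde{k}}$, the proof of Theorem \ref{thm_gradient} furnishes $g \geq \e_1|A|^2 + \e_2$, $(\partial_t - \Delta)g \geq \tilde\kappa|\nabla A|^2$, and the pointwise bound $|\nabla A|^2 \leq \gamma_1|A|^4 + \gamma_2 \leq C g^2$. Since $g\sim |A|^2$, a uniform upper bound on the ratio $\frac{|\nabla^2 A|^2}{g^3}$ yields exactly $|\nabla^2 A|^2 \leq \gamma_3|A|^6 + \gamma_4$, and this ratio is what I would estimate.

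The first step is to derive, from the usual commutator identities and a Simons-type computation, an inequality of the schematic form
\begin{align*}
(\partial_t - \Delta)|\nabla^2 A|^2 \leq -2|\nabla^3 A|^2 + C_1|A|^2|\nabla^2 A|^2 + C_2|A|\,|\nabla A|^2|\nabla^2 A| + C_3,
\end{align*}
where the ambient-curvature contribution is bounded because $\bar{\nabla}\bar{R}=0$ on $\mathbb{C}P^n$. Then, as in the proof of Theorem \ref{thm_gradient}, I would iterate the quotient identity $(\partial_t-\Delta)(w/z) = \frac{2}{z}\la \nabla z, \nabla(w/z)\ra + \frac{W}{z} - \frac{w Z}{z^2}$ three times, with $w = |\nabla^2 A|^2$ and $z = g$, and at each pass apply the Kato-type inequality $\la \nabla g, \nabla|\nabla^2 A|^2\ra \leq g|\nabla^3 A|^2 + \frac{1}{g}|\nabla g|^2|\nabla^2 A|^2$ to absorb the cross terms into the good $-|\nabla^3 A|^2$ contribution. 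The mixed cubic $|A|\,|\nabla A|^2|\nabla^2 A|$ is first bounded using $|\nabla A|^2 \leq Cg^2$ and $|A|\leq Cg^{1/2}$, giving $Cg^{5/2}|\nabla^2 A|$, which by Young becomes $\delta g|\nabla^2 A|^2 + C'g^4$. After division by $g^3$, the reaction terms collapse to the form $\frac{|\nabla^2 A|^2}{g^3}\bigl(c|A|^2+d - C''\frac{|\nabla^2 A|^2}{g^2}\bigr) + C'''$. Combined with the initial-data bound $Ng \geq c|A|^2+d$, the parabolic maximum principle then forces $\frac{|\nabla^2 A|^2}{g^3}\leq C(m,\mathcal{M}_0)$.

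The main obstacle is precisely the mixed cubic term $|A|\,|\nabla A|^2|\nabla^2 A|$: it is not a third-order derivative, so the Kato trick does not absorb it into $-2|\nabla^3 A|^2$, and a crude application of Young combined with $|\nabla A|^2 \leq C|A|^4$ would produce a $|A|^{10}$ contribution too large to offset by $g^3$. The resolution is to invoke the sharper form $|\nabla A|^2 \leq C g^2$ from Theorem \ref{thm_gradient} \emph{before} applying Young, thereby converting the offending factor into powers of $g$ rather than $|A|$, at which point the iterated quotient-rule nonlinearity from the negative term $-\frac{3w(\partial_t-\Delta)z}{z^4}$ dominates it at a point where $\frac{|\nabla^2 A|^2}{g^3}$ is large. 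Once the ratio is bounded, the conclusion follows immediately from $g \leq c|H|^2 + d \leq Cm|A|^2 + d$.
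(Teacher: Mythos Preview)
The paper itself does not prove this statement; it cites \cite{HNAV} and only remarks that higher-order estimates ``follow by an analogous method.'' So the comparison is really between your sketch and the standard bootstrapping argument behind that reference.

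Your sketch has a genuine gap in the final step. You assert that the iterated quotient rule produces a nonlinearity of the form
\[
\frac{|\nabla^2 A|^2}{g^3}\Big(c|A|^2+d - C''\frac{|\nabla^2 A|^2}{g^2}\Big),
\]
but the mechanism you invoke, namely the term $-\tfrac{3w(\partial_t-\Delta)z}{z^4}$ with $z=g$, only yields $-3\tilde\kappa\,\dfrac{|\nabla^2 A|^2\,|\nabla A|^2}{g^4}$, because $(\partial_t-\Delta)g\geq\tilde\kappa|\nabla A|^2$ is linear in the \emph{first} derivative, not the second. This term is \emph{linear} in $|\nabla^2 A|^2$ and vanishes wherever $\nabla A=0$; it cannot absorb the positive reaction term $C|A|^2|\nabla^2 A|^2/g^3$ at such points, so the maximum principle does not close. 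In the first-order proof the coincidence $w=|\nabla A|^2$ and $Z\gtrsim|\nabla A|^2$ is exactly what makes $wZ$ quadratic in $|\nabla A|$; that coincidence is lost when you promote $w$ to $|\nabla^2 A|^2$ while keeping $z=g$. (A secondary issue: your Young step on the mixed cubic leaves a residual $C'g$ after dividing by $g^3$, which is unbounded.)

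The standard repair is to change the barrier so that its heat excess is bounded below by $|\nabla^2 A|^2$. Set $g_2:=C_0\,g^2-|\nabla A|^2$, which is positive by Theorem~\ref{thm_gradient}, and use that the Bochner term in the evolution of $|\nabla A|^2$ gives
\[
(\partial_t-\Delta)g_2 \;\geq\; 2|\nabla^2 A|^2 - C'g^3 - C'',
\]
after controlling $|\nabla g|^2$, $|A|^2|\nabla A|^2$ and the lower-order curvature terms with the first-order bound. Now run the quotient argument on $\dfrac{|\nabla^2 A|^2}{g\,g_2}$ (or equivalently add a large multiple of $\tfrac{|\nabla A|^2}{g^2}$ to $\tfrac{|\nabla^2 A|^2}{g^3}$). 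The $Z$-contribution from $g_2$ produces the genuine quadratic term $-\,2\dfrac{|\nabla^2 A|^4}{g\,g_2^2}$, which dominates $C|A|^2|\nabla^2 A|^2/(g\,g_2)$ at a maximum, and the $-C'g^3$ error is harmless since $g^3/(g\,g_2)\leq C$. This is the ``analogous method'' alluded to in the paper.
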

Higher order estimates on $\left|\nabla^m A\right|$ for all $m$ follow by an analogous method. Furthermore, we derive estimates on the time derivative of the second fundamental form since we have the evolution equation
	\begin{align*}
	\left|\partial_t A\right|=|\Delta A+A * A * A| \leq C|\nabla^2 A|+C|A|^3 \leq c_1|A|^3+c_2.
	\end{align*}

\section{Codimension Estimates on $\mathbb{C}P^n$}
In this section, we consider $T<\infty$ and we want to show that in regions of high curvature, the submanifold of $\mathbb{C}P^n$ becomes approximately codimension one in a quantifiable sense. Our goal is to seperate the second fundamental form in the principle direction and the second fundamental form in the other directions and compute their evolution equations seperately. Later, we find estimates for the reaction and gradient terms as well as for the lower order terms, which appear due to the Riemannian ambient space. Then, we start by computing the evolution equation of the quantity $\frac{|A^-|^2}{f}$, which since in the limit the background space is Euclidean, the result will follow from the maximum principle. Here, we follow the computations of our previous paper \cite{HNAV}, using the quadratic bound $|A|^2\le c_m|H|^2+d_m$, for $c_m=\frac{1}{m-\tilde{k}}$ and $d_m=\frac{\tilde{k}}{m}(m-3-4k),\tilde{k}\ge 1$.
\subsection{The Evolution Equation of $|A^-|^2$}
We start by computing the evolution equation of $|A^-|^2$. We define the tensor $A^-$ by
	\begin{align*}
	A^-(X,Y)=A(X,Y)-\frac{\langle A(X,Y),H\rangle}{|H|^2}H,
	\end{align*}for vector fields $X,Y$ tangent to $\mathcal{M}_t$. The tensor $A^-$ is well defined for $H>0$. At points where $H=0$, we define $\frac{|A^-|^2}{f}=0$, for $f=d_m+c_m|H|^2-|A|^2>0$ as we will see later on. Therefore, we will need to compute the evolution equations of $|A|^2$ and $\frac{|\langle A,H\rangle|^2}{|H|^2}.$ Using \eqref{eqn_|H|^2} and the quotient rule, we have
	\begin{align*}
	\Big(\partial_t&-\Delta\Big)\frac{\sum_{i,j}|\langle A_{ij},H\rangle|^2}{|H|^2}=|H|^{-2}\left(\partial_t-\Delta\right)\sum_{i,j}|\langle A_{ij},H\rangle|^2\\
	&+2\sum_k|H|^{-2}\Big\langle \nabla_k |H|^2,\nabla_k \frac{\sum_{i,j}|\langle A_{ij},H\rangle|^2}{|H|^2}\Big\rangle\\
	&-|H|^{-4}\sum_{i,j}|\langle A_{ij},H\rangle|^2\big(-2|\nabla^\bot H|^2+2\sum_{i,j}|\langle A_{ij},H\rangle|^2+2\sum_{k,\alpha,\beta}\bar{R}_{k\alpha k\beta} H^\alpha H^\beta\big).
	\end{align*}Before computing the evolution equation of $\sum_{i,j}|\langle A_{ij},H\rangle|^2$, we simplify the other terms. In particular, using $\sum_{i,j}|\langle A_{ij},H\rangle|^2=|H|^2|h|^2$ and
	\begin{align*}
	|\nabla^\bot H|^2=|H|^2|\nabla^\bot \nu_1|^2+|\nabla |H||^2,
	\end{align*}we write
	\begin{align*}
	2|H|^{-4}\sum_{i,j}|\langle A_{ij},H\rangle|^2|\nabla^\bot H|^2=2|h|^2|\nabla^\bot \nu_1|^2+2|H|^{-2}|h|^2|\nabla |H||^2,
	\end{align*}	\begin{align*}
	-2|H|^{-4}\sum_{i,j}|\langle A_{ij},H\rangle|^4=-2|h|^4,
	\end{align*}
	\begin{align*}
	2|H|^{-4}\sum_{i,j}|\langle A_{ij},H\rangle|^2\sum_{k,\alpha,\beta}\bar{R}_{k\alpha k\beta} H^\alpha H^\beta=2|h|^2|H|^{-2}\sum_{k,\alpha,\beta}\bar{R}_{k\alpha k\beta}H^\alpha H^\beta.
	\end{align*}As for the remaining gradient terms, we have
	\begin{align*}
	\nabla_k |H|^2=2\langle \nabla_k^\bot H,H\rangle
	\end{align*}and
	\begin{align*}
	\nabla_k (|H|^{-2}\sum_{i,j}|\langle A_{ij},H\rangle|^2)=\nabla_k |h|^2=2\sum_{i,j}h_{ij} \nabla_k h_{ij}.
	\end{align*}Therefore, since $H=|H|\nu_1$ and $\langle\nabla^\bot_k \nu_1,\nu_1\rangle=0$, we have
	\begin{align*}
	2|H|^{-2}\sum_k\Big\langle\nabla_k |H|^2,\nabla_k\frac{\sum_{i,j}|\langle A_{ij},H\rangle|^2}{|H|^2}\Big\rangle&=8|H|^{-2}\sum_{i,j,k}\langle\nabla^\bot_k H,H\rangle h_{ij}\nabla_k h_{ij}\\
	&=8|H|^{-1} \sum_{i,j,k}\nabla_k |H|h_{ij}\nabla_k h_{ij}.
	\end{align*}To summarise, we have shown so far that
	\begin{align*}
	\left(\partial_t-\Delta\right)\frac{\sum_{i,j}|\langle A_{ij},H\rangle|^2}{|H|^2}&=|H|^{-2}\left(\partial_t-\Delta\right)\sum_{i,j}|\langle A_{ij},H\rangle|^2-2|h|^4+2|h|^2\sum_k|\nabla^\bot_k \nu_1|^2\\
	&+2|H|^{-2}|h|^2|\nabla|H||^2+8|H|^{-1} \sum_{i,j,k}\nabla_k |H|h_{ij}\nabla_k h_{ij}\\
	&-2|h|^2|H|^{-2}\sum_{k,\alpha,\beta}\bar{R}_{k\alpha k\beta} H^\alpha H^\beta.
	\end{align*}For the evolution equation of $\langle A_{ij},H\rangle$, we have the following lemma.
\begin{lemma}\label{7B}
The evolution equation of $|\langle A_{ij},H\rangle|^2$ is
	\begin{align*}
	|H|^{-2}\left(\partial_t-\Delta\right)|\langle A_{ij},H\rangle|^2&=4|\mathring{h}_{ij}A_{ij}^- |^2+2|R_{ij}^\bot (\nu_1)|^2+4|h|^4-4|H|^{-1}\mathring{h}_{ij}\nabla_k |H|\langle\nabla^\bot_k A_{ij}^- ,\nu_1\rangle\\
	&-4\mathring{h}_{ij}\langle\nabla^\bot_k A_{ij}^- ,\nabla^\bot_k\nu_1\rangle-4|h|^2|\nabla^\bot_k\nu_1|^2-2|H|^{-2}|h|^2|\nabla|H||^2\\
	&-8|H|^{-1} \nabla_k |H|h_{ij}\nabla_k h_{ij}-2|\nabla h|^2+2B'\\
	&-2|\bar{R}_{ij}(\nu_1)|^2-4\langle\bar{R}_{ij}(\nu_1),\mathring{h}_{ip}A^-_{jp}-\mathring{h}_{jp}A^-_{ip}\rangle,
	\end{align*}where
	\begin{align*}
	B'&:=2|H|^{-2}\bar{R}_{ipjq}\langle A_{pq},H\rangle\langle A_{ij},H\rangle-2|H|^{-2}\bar{R}_{kjkp} \langle A_{pi},H\rangle\langle A_{ij},H\rangle\\
	&+|H|^{-2}A^\alpha_{ij}\bar{R}_{k\alpha k\beta}H^\beta\langle A_{ij},H\rangle+|H|^{-2}H^\alpha\bar{R}_{k\alpha k\beta} A_{ij}^\beta \langle A_{ij},H\rangle\\
	&-2|H|^{-2}A^\alpha_{jp} \bar{R}_{ip\alpha\beta} H^\beta\langle A_{ij},H\rangle -2|H|^{-2}A^\alpha_{ip}\bar{R}_{jp\alpha\beta} H^\beta \langle A_{ij},H\rangle.
	\end{align*}\end{lemma}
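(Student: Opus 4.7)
The plan is to differentiate the squared inner product $|\langle A_{ij},H\rangle|^2$ directly, substitute the evolution equations from Proposition \ref{eqn|A|^2|H|^2}, and reorganise the result using the decomposition $A_{ij}=h_{ij}\nu_1+A^-_{ij}$ together with $H=|H|\nu_1$. Concretely, I start from the product rule identity $(\partial_t-\Delta)|\langle A_{ij},H\rangle|^2 = 2\langle A_{ij},H\rangle(\partial_t-\Delta)\langle A_{ij},H\rangle - 2|\nabla^\bot\langle A_{ij},H\rangle|^2$, combined with the heat operator rule for the inner product $(\partial_t-\Delta)\langle A_{ij},H\rangle = \langle(\partial_t-\Delta)A_{ij},H\rangle + \langle A_{ij},(\partial_t-\Delta)H\rangle - 2\langle\nabla^\bot_k A_{ij},\nabla^\bot_k H\rangle$. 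Substituting \eqref{eqn_A} and \eqref{eqn_H}, pairing with $H=|H|\nu_1$, and dividing by $|H|^2$ yields a long expression which is then regrouped term by term.

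On the reaction side, the quartic contractions coming from the first four terms of \eqref{eqn_A} paired with $H$ produce, after summing over $i,j$, the contributions $4|h|^4$ and $4|\mathring{h}_{ij}A^-_{ij}|^2$, since the contracted $A_{pq}$ split into their $\nu_1$ and $A^-$ parts via $A=A^-+h\nu_1$. The commutator-type sums involving $\sum_p(A^\alpha_{ip}A^\beta_{jp}-A^\alpha_{jp}A^\beta_{ip})$ arising from \eqref{eqn_A}, once restricted to $\beta=1$, are rewritten through the Ricci equation $R^\bot_{ij\alpha\beta} = \sum_p(A^\alpha_{ip}A^\beta_{jp}-A^\alpha_{jp}A^\beta_{ip}) + \bar{R}_{ij\alpha\beta}$, so that the resulting quadratic form appears as $2|R^\bot_{ij}(\nu_1)|^2 - 2|\bar{R}_{ij}(\nu_1)|^2 - 4\langle\bar{R}_{ij}(\nu_1),\mathring{h}_{ip}A^-_{jp}-\mathring{h}_{jp}A^-_{ip}\rangle$. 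The residual ambient-curvature contractions of types $\bar{R}_{ipjq}A_{pq}$, $\bar{R}_{kjkp}A_{pi}$, $\bar{R}_{k\alpha k\beta}$, and $\bar{R}_{jp\alpha\beta}$, coming from both the $A$-equation and the $H$-equation paired with $H$ and $A$ respectively, are collected after multiplication by $|H|^{-2}$ into the term $B'$.

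For the gradient side, I decompose $\nabla^\bot_k H = (\nabla_k|H|)\nu_1 + |H|\nabla^\bot_k\nu_1$ using $\langle\nabla^\bot_k\nu_1,\nu_1\rangle=0$, and split $\nabla^\bot_k A_{ij}$ into its $\nu_1$-component $(\nabla_k h_{ij}+\langle\nabla^\bot_k A^-_{ij},\nu_1\rangle)\nu_1$ and its $\hat{E}$-component as in Proposition \ref{prop2.2naff}. Expanding $-2|\nabla^\bot\langle A_{ij},H\rangle|^2$ together with the cross term $-2\langle\nabla^\bot_k A_{ij},\nabla^\bot_k H\rangle$ from the product rule and dividing by $|H|^2$ then produces exactly the six gradient contributions in the statement: $-2|\nabla h|^2$, $-4|h|^2|\nabla^\bot_k\nu_1|^2$, $-2|H|^{-2}|h|^2|\nabla|H||^2$, $-8|H|^{-1}\nabla_k|H|h_{ij}\nabla_k h_{ij}$, $-4\mathring{h}_{ij}\langle\nabla^\bot_k A^-_{ij},\nabla^\bot_k\nu_1\rangle$, and $-4|H|^{-1}\mathring{h}_{ij}\nabla_k|H|\langle\nabla^\bot_k A^-_{ij},\nu_1\rangle$. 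The main obstacle is the careful bookkeeping of signs and coefficients, most delicately in isolating the $|R^\bot_{ij}(\nu_1)|^2$ and $|\bar{R}_{ij}(\nu_1)|^2$ pieces via the Ricci equation and in separating the $\nu_1$ and $\hat E$ components of every gradient expression without double-counting; the rest of the calculation parallels the Euclidean analogue carried out in \cite{HNAV}, the genuinely new contribution being precisely the ambient-curvature terms bundled into $B'$.
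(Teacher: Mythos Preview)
Your proposal is correct and follows essentially the same route as the paper: start from the product rule for $(\partial_t-\Delta)|\langle A_{ij},H\rangle|^2$, substitute the evolution equations \eqref{eqn_A} and \eqref{eqn_H}, split the reaction terms via $A=A^-+h\nu_1$ and the Ricci equation to extract $|R^\bot_{ij}(\nu_1)|^2$, and handle the gradient terms by decomposing $\nabla^\bot_k H$ and $\nabla^\bot_k A_{ij}$ into their $\nu_1$ and $\hat E$ components. The only detail the paper makes explicit that you left implicit is the use of Uhlenbeck's trick to suppress the $\partial_t g^{ij}$ contribution when passing from $(\partial_t-\Delta)\langle A_{ij},H\rangle$ to $(\partial_t-\Delta)|\langle A_{ij},H\rangle|^2$.
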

\begin{proof}
Whenever $h$ is traced with $A^-$ or its derivative, we may replace $h$ with $\mathring{h}$, because $A^-$ is traceless. Also, for simplicity, we avoid the summation notation. To begin with, using \eqref{eqn_A}, we substitute formulas
	\begin{align*}
	\Big\langle\left(\partial_t-\Delta\right)^\bot A_{ij},H\Big\rangle&=\langle A_{ij},A_{pq}\rangle \langle A_{pq},H\rangle+\langle A_{iq},A_{pq}\rangle \langle A_{pj},H\rangle+\langle A_{jq},A_{pq}\rangle\langle A_{pi},H\rangle\\
	&-2\langle A_{ip},A_{jq}\rangle \langle A_{pq}, H\rangle+2\bar{R}_{ipjq}\langle A_{pq},H\rangle-\bar{R}_{kjkp} \langle A_{pi},H\rangle-\bar{R}_{kikp}\langle A_{pj},H\rangle\\
	&+A^\alpha_{ij}\bar{R}_{k\alpha k\beta} \langle\nu_\beta, H\rangle-2A^\alpha_{jp}\bar{R}_{ip\alpha\beta}\langle \nu_\beta, H\rangle-2A^\alpha_{ip}\bar{R}_{jp\alpha\beta}\langle\nu_\beta, H\rangle,\\
	\Big\langle A_{ij} ,\left(\partial_t-\Delta\right)^\bot H\Big\rangle&=\langle A_{pq}, H\rangle\langle A_{pq},A_{ij}\rangle+H^\alpha\bar{R}_{k\alpha k\beta}\langle \nu_\beta, A_{ij}\rangle.
	\end{align*}Tracing each of the equations with a copy of $\langle A_{ij},H\rangle$, we get
	\begin{align*}
	\Big\langle\left(\partial_t-\Delta\right)^\bot A_{ij},H\Big\rangle\langle A_{ij},H\rangle&=\langle A_{ij},A_{pq}\rangle \langle A_{pq},H\rangle\langle A_{ij},H\rangle+2\langle A_{iq},A_{pq}\rangle \langle A_{pj},H\rangle\langle A_{ij},H\rangle\\
	&-2\langle A_{ip},A_{jq}\rangle \langle A_{pq},H\rangle\langle A_{ij},H\rangle+2\bar{R}_{ipjq}\langle A_{pq},H\rangle\langle A_{ij},H\rangle\\
	&-2\bar{R}_{kjkp} \langle A_{pi},H\rangle\langle A_{ij},H\rangle+A^\alpha_{ij}\bar{R}_{k\alpha k\beta}\langle \nu_\beta, H\rangle\langle A_{ij},H\rangle\\
	&-2A^\alpha_{jp} \bar{R}_{ip\alpha\beta}\langle \nu_\beta, H\rangle\langle A_{ij},H\rangle -2A^\alpha_{ip}\bar{R}_{jp\alpha\beta}\langle \nu_\beta, H\rangle \langle A_{ij},H\rangle,\\
	\Big\langle A_{ij} ,\left(\partial_t-\Delta\right)^\bot H\Big\rangle\langle A_{ij},H\rangle&=\langle A_{pq}, H\rangle\langle A_{pq},A_{ij}\rangle\langle A_{ij},H\rangle+H^\alpha\bar{R}_{k\alpha k\beta}\langle\nu_\beta, A_{ij}\rangle\langle A_{ij},H\rangle.
	\end{align*}Putting the above equations together and keeping in mind that $\langle \nu_\beta,H\rangle=H^\beta$ we have,
	\begin{align*}
	\Big(\left(\partial_t-\Delta\right)\langle A_{ij},H\rangle\Big)\langle A_{ij},H\rangle&=2\langle A_{ij},A_{pq}\rangle \langle A_{pq},H\rangle\langle A_{ij},H\rangle+2\langle A_{iq},A_{pq}\rangle \langle A_{pj},H\rangle\langle A_{ij},H\rangle\\
	&-2\langle A_{ip},A_{jq}\rangle \langle A_{pq},H\rangle\langle A_{ij},H\rangle +2\bar{R}_{ipjq}\langle A_{pq},H\rangle\langle A_{ij},H\rangle\\
	&-2\bar{R}_{kjkp} \langle A_{pi},H\rangle\langle A_{ij},H\rangle+A^\alpha_{ij}\bar{R}_{k\alpha k\beta} H^\beta\langle A_{ij},H\rangle\\
	&-2A^\alpha_{jp} \bar{R}_{ip\alpha\beta} H^\beta\langle A_{ij},H\rangle -2A^\alpha_{ip}\bar{R}_{jp\alpha\beta} H^\beta \langle A_{ij},H\rangle\\
	&+H^\alpha\bar{R}_{k\alpha k\beta} A_{ij}^\beta \langle A_{ij},H\rangle-2\langle \nabla^\bot_k A_{ij},\nabla^\bot_k H\rangle\langle A_{ij},H\rangle.
	\end{align*}Define
	\begin{align*}
	B&:=2\bar{R}_{ipjq}\langle A_{pq},H\rangle\langle A_{ij},H\rangle-2\bar{R}_{kjkp} \langle A_{pi},H\rangle\langle A_{ij},H\rangle\\
	&+A^\alpha_{ij}\bar{R}_{k\alpha k\beta} H^\beta\langle A_{ij},H\rangle+H^\alpha\bar{R}_{k\alpha k\beta} A_{ij}^\beta \langle A_{ij},H\rangle\\
	&-2A^\alpha_{jp} \bar{R}_{ip\alpha\beta} H^\beta\langle A_{ij},H\rangle -2A^\alpha_{ip}\bar{R}_{jp\alpha\beta} H^\beta \langle A_{ij},H\rangle.
	\end{align*}We use the Uhlenbeck's trick to suppose that we are in an orthogonal frame. That is, suppose $g^{ij}=\delta_{ij}$ remains orthogonal along the flow. More precisely, for any $e_i,e_j$ orthonormal, we have
	\begin{align*}
	\partial_t g^{ij}=\partial_t \langle e_i,e_j\rangle=0.
	\end{align*}Therefore, excluding the time derivative of the inverse of the metric, which is the term
	\begin{align*}
	2\big(\partial_tg^{ij}\big)g^{pq}\langle A_{ip},H\rangle\langle A_{jq},H\rangle,
	\end{align*}we have
	\begin{align}\label{7eqn_AH}
	\left(\partial_t-\Delta\right)|\langle A_{ij},H\rangle|^2&=2\Big(\left(\partial_t-\Delta\right)\langle A_{ij},H\rangle\Big)\langle A_{ij},H\rangle-2|\nabla\langle A_{ij},H\rangle|^2\nonumber\\
	&=4\langle A_{ij},A_{pq}\rangle \langle A_{pq},H\rangle\langle A_{ij},H\rangle+4\langle A_{iq},A_{pq}\rangle \langle A_{pj},H\rangle\langle A_{ij},H\rangle\nonumber\\
	&-4\langle A_{ip},A_{jq}\rangle \langle A_{pq},H\rangle\langle A_{ij},H\rangle-4\langle\nabla^\bot_k A_{ij},\nabla_k^\bot H\rangle\langle A_{ij},H\rangle\nonumber\\
	&-2|\nabla\langle A_{ij},H\rangle|^2+2B.
	\end{align}To finish the proof, we multiply $|H|^{-2}$ and then rewrite each of the remaining terms using $A=A^-+h\nu_1$. For the first term on the first line of \eqref{7eqn_AH}, we have
	\begin{align}\label{7eqn2}
	4|H|^{-2}\langle A_{ij},A_{pq}\rangle \langle A_{pq},H\rangle\langle A_{ij},H\rangle&=4|H|^{-2}|H|^2h_{ij} h_{pq}\langle A_{ij},A_{pq}\rangle\nonumber\\
	&=4|h|^4+4h_{ij} h_{pq}\langle A_{ij}^- ,A_{pq}^- \rangle\nonumber\\
	&=4|h|^4+4\mathring{h}_{ij}\mathring{h}_{pq}\langle A_{ij}^- ,A_{pq}^- \rangle\nonumber\\
	&=4|h|^4+4|\mathring{h}_{ij}A_{ij}^- |^2.
	\end{align}Also, B can be rewritten as
	\begin{align*}
	B'&:=2|H|^{-2}\bar{R}_{ipjq}\langle A_{pq},H\rangle\langle A_{ij},H\rangle-2|H|^{-2}\bar{R}_{kjkp} \langle A_{pi},H\rangle\langle A_{ij},H\rangle\\
	&+|H|^{-2}A^\alpha_{ij}\bar{R}_{k\alpha k\beta}H^\beta\langle A_{ij},H\rangle+|H|^{-2}H^\alpha\bar{R}_{k\alpha k\beta} A_{ij}^\beta \langle A_{ij},H\rangle\\
	&-2|H|^{-2}A^\alpha_{jp} \bar{R}_{ip\alpha\beta} H^\beta\langle A_{ij},H\rangle -2|H|^{-2}A^\alpha_{ip}\bar{R}_{jp\alpha\beta} H^\beta \langle A_{ij},H\rangle.
	\end{align*}In higher codimension, the fundamental Gauss, Codazzi and Ricci equations on Riemannian manifold in local frame take the form
	\begin{align*}
	R_{ijpq}=\bar{R}_{ijpq}+A^\alpha_{ip}A^\alpha_{jq}-A^\alpha_{jp}A^\alpha_{iq},
	\end{align*}	\begin{align*}
	(\nabla^\bot_i A)^\alpha_{jp}-(\nabla^\bot_j A)^\alpha_{ip}=- \bar{R}_{ijp\alpha},
	\end{align*}
and
	\begin{align*}
	R^\bot_{ij\alpha\beta}=\bar{R}_{ij\alpha\beta}+A^\alpha_{ip}A^\beta_{jp}-A^\beta_{ip}A^\alpha_{jp}.
	\end{align*}Define a vector-valued version of the normal curvature by
	\begin{align}
	R^\bot_{ij}(\nu_\alpha)=R^\bot_{ij\alpha\beta}\nu_\beta= \bar{R}_{ij\alpha\beta}+(A^\alpha_{ip}A^\beta_{jp}-A^\beta_{ip}A^\alpha_{jp})\nu_\beta.
	\end{align}In particular, we note that $R^\bot_{ij}(\nu_1)=\bar{R}_{ij}(\nu_1)+h_{ip}A^\beta_{jp}-{h}_{jp}A^\beta_{ip}$, which in view of
	\begin{align*}
	A_{ij}=A^-_{ij}+h_{ij}\nu_1=A^-_{ij}+\mathring{h}_{ij}\nu_1+\frac{1}{n}|H|g_{ij}\nu_1,
	\end{align*}gives
	\begin{align}\label{7eqn_Rbot}
	R^\bot_{ij}(\nu_1)=\bar{R}_{ij}(\nu_1)+\mathring{h}_{ip}A^-_{jp}-\mathring{h}_{jp}A^-_{ip}.
	\end{align}For the difference of second and third term of \eqref{7eqn_AH}, we notice the resemblance to $\sum_{i,j}|R_{ij}^\bot (\nu_1)|^2$ in \eqref{7eqn_Rbot}. We compute
	\begin{align}\label{7eqn3}
	|\mathring{h}_{ip}A_{jp}^- -\mathring{h}_{jp}A_{ip}^- |^2&=|h_{ip}A_{jp} -h_{jp}A_{ip}|^2=\langle h_{ip}A_{jp} -h_{jp}A_{ip} ,h_{iq}A_{jq} -h_{jq}A_{iq}\rangle\nonumber\\
	&=2h_{ip}h_{iq}\langle A_{jp},A_{jq}\rangle-2h_{ip}h_{jq}\langle A_{jp},A_{iq}\rangle\nonumber\\
	&=2|H|^{-2}\big(\langle A_{jp},A_{jq}\rangle\langle A_{ip},H\rangle\langle A_{iq},H\rangle-\langle A_{jp},A_{iq}\rangle\langle A_{ip},H\rangle\langle A_{jq},H\rangle\big).
	\end{align}Therefore,
	\begin{align*}
	|R_{ij}^\bot (\nu_1)|^2&= |\bar{R}_{ij}(\nu_1)|^2+2|H|^{-2}\big(\langle A_{jp},A_{jq}\rangle\langle A_{ip},H\rangle\langle A_{iq},H\rangle-\langle A_{jp},A_{iq}\rangle\langle A_{ip},H\rangle\langle A_{jq},H\rangle\big)\\
	&+2\langle \bar{R}_{ij}(\nu_1),\mathring{h}_{ip}A^-_{jp}-\mathring{h}_{jp}A^-_{ip}\rangle.
	\end{align*}After reindexing (e.g. $j \to p\to q\to i\to j$ on the second term and $j\to i\to q\to j, p\to p$ on the third term), this gives
	\begin{align*}
	2|R_{ij}^\bot (\nu_1)|^2&=2|\bar{R}_{ij}(\nu_1)|^2+4|H|^{-2}\big(\langle A_{ip},A_{pq}\rangle\langle A_{jq},H\rangle\langle A_{ij},H\rangle-\langle A_{ip},A_{jq}\rangle\langle A_{pq},H\rangle\langle A_{ij},H\rangle\big)\\
	&+4\langle\bar{R}_{ij}(\nu_1),\mathring{h}_{ip}A^-_{jp}-\mathring{h}_{jp}A^-_{ip}\rangle.
	\end{align*}Thus, we have shown the reaction terms of our lemma statement are correct. For the gradient terms, it follows from the identities
	\begin{align*}
	\langle\nabla^\bot_k A_{ij},\nu_1\rangle=\langle\nabla^\bot_k A_{ij}^- ,\nu_1\rangle+\nabla_k h_{ij},
	\end{align*}	\begin{align}\label{7eqn4}
	\langle\nabla^\bot_k A_{ij},\nabla^\bot_k \nu_1\rangle=\langle\nabla^\bot_k A_{ij}^- ,\nabla^\bot_k \nu_1\rangle+h_{ij}|\nabla_k^\bot \nu_1|^2,
	\end{align}
	\begin{align*}
	\nabla^\bot_k H=\nabla_k |H|\nu_1+|H|\nabla^\bot_k \nu_1.
	\end{align*}Therefore, we have
	\begin{align}\label{7eqn5}
	-4|H|^{-2}\langle\nabla^\bot_k A_{ij},\nabla^\bot_k H\rangle\langle A_{ij},H\rangle&=-4|H|^{-1}h_{ij} \nabla_k |H|\langle\nabla^\bot_k A_{ij},\nu_1\rangle-4|H|^{-1}h_{ij}\langle\nabla^\bot_k A_{ij},\nabla^\bot_k \nu_1\rangle\nonumber\\
	&=-4|H|^{-1}\mathring{h}_{ij}\nabla_k |H|\langle\nabla^\bot_k A_{ij}^- ,\nu_1\rangle-4|H|^{-1}h_{ij} \nabla_k |H|\nabla_k h_{ij}\nonumber\\
	&-4\mathring{h}_{ij}\langle\nabla^\bot_k A_{ij}^- ,\nabla^\bot_k \nu_1\rangle-4|h|^2|\nabla_k^\bot\nu_1|^2,\nonumber\\
	&\\
	-2|H|^{-2}|\nabla\langle A_{ij},H\rangle|^2&=-2|H|^{-2}|\nabla(|H|h_{ij})|^2\nonumber\\
	&=-2|H|^{-2}|h|^2|\nabla|H||^2-2|\nabla h|^2-4|H|^{-1}h_{ij} \nabla_k |H|\nabla_k h_{ij}\nonumber,
	\end{align}since $A^-_{ii}=0$, meaning that it's trace free. Combining \eqref{7eqn2}-\eqref{7eqn5}, we get the desired result.
\end{proof}
Substituting the result of the above lemma into the evolution equation of $|H|^{-2}\sum_{i,j}|\langle A_{ij},H\rangle|^2$ and combining like terms, we have
	\begin{align*}
	\left(\partial_t-\Delta\right)\frac{\sum_{i,j}|\langle A_{ij},H\rangle|^2}{|H|^2}&=4\sum_{i,j}|\mathring{h}_{ij}A_{ij}^- |^2+2\sum_{i,j}|R_{ij}^\bot (\nu_1)|^2+2|h|^4\\
	&-4|H|^{-1}\sum_{i,j,k}\mathring{h}_{ij}\nabla_k |H|\langle\nabla^\bot_k A_{ij}^- ,\nu_1\rangle-4\sum_{i,j,k}\mathring{h}_{ij}\langle\nabla^\bot_k A_{ij}^- ,\nabla^\bot_k\nu_1\rangle\\
	&-2|h|^2\sum_k|\nabla^\bot_k\nu_1|^2-2|\nabla h|^2+2B'-2|h|^2|H|^{-2}\sum_{k,\alpha,\beta}\bar{R}_{k\alpha k\beta} H^\alpha H^\beta\\
	&-2\sum_{i,j}|\bar{R}_{ij}(\nu_1)|^2-4\sum_{i,j,p}\langle \bar{R}_{ij}(\nu_1),\mathring{h}_{ip}A^-_{jp}-\mathring{h}_{jp}A^-_{ip}\rangle.
	\end{align*}We negate the expression above, add in the evolution equation of $|A|^2$ and use \eqref{7eqn_P_a} to get
	\begin{align*}
	\left(\partial_t-\Delta\right)|A^-|^2&=-2|\nabla^\bot A|^2+2\sum_{i,j,p,q}|\langle A_{ij},A_{pq}\rangle|^2+2\sum_{i,j}|R_{ij}^\bot |^2+\Big(P_\alpha-2B'\Big)\\
	&-4\sum_{i,j}|\mathring{h}_{ij}A_{ij}^- |^2-2\sum_{i,j}|R_{ij}^\bot (\nu_1)|^2-2|h|^4+4|H|^{-1}\sum_{i,j,k}\mathring{h}_{ij}\nabla_k |H|\langle\nabla^\bot_k A_{ij}^- ,\nu_1\rangle\\
	&+4\sum_{i,j,k}\mathring{h}_{ij}\langle\nabla^\bot_k A_{ij}^- ,\nabla^\bot_k \nu_1\rangle+2|\nabla h|^2+2|h|^2\sum_k|\nabla^\bot_k\nu_1|^2\\
	&+2|h|^2|H|^{-2}\sum_{k,\alpha,\beta}\bar{R}_{k\alpha k\beta} H^\alpha H^\beta+2\sum_{i,j}|\bar{R}_{ij}(\nu_1)|^2\\
	&+4\sum_{i,j,p}\langle\bar{R}_{ij}(\nu_1),\mathring{h}_{ip}A^-_{jp}-\mathring{h}_{jp}A^-_{ip}\rangle.
	\end{align*}Taking the term $2|H|^{-2}\sum_{i,j,k,\alpha,\beta}\bar{R}_{k\alpha k\beta} H^\alpha A_{ij}^\beta \langle A_{ij},H\rangle$ out of $2B'$ and the last term of the evolution equation of $\frac{\sum_{i,j}|\langle A_{ij},H\rangle|^2}{|H|^2}$, we have
	\begin{align*}
	&2|H|^{-2}\sum_{i,j,k,\alpha,\beta}\bar{R}_{k\alpha k\beta}H^\alpha A_{ij}^\beta \langle A_{ij},H\rangle-2|h|^2|H|^{-2}\sum_{i,j,k,\alpha,\beta} \bar{R}_{k\alpha k\beta} H^\alpha H^\beta\\
	&=2|H|^{-2}\sum_{i,j,k,\alpha,\beta}\bar{R}_{k\alpha k\beta} H^\alpha\big(A^{-,\beta}_{ij}+\frac{|\langle A_{ij},H\rangle|}{|H|^2}H^\beta\big) \langle A_{ij},H\rangle-2|h|^2|H|^{-2}\sum_{i,j,k,\alpha,\beta} \bar{R}_{k\alpha k\beta} H^\alpha H^\beta\\
	&=2|H|^{-2}\sum_{i,j,k,\alpha,\beta\ge 2}\bar{R}_{k\alpha k\beta} H^\alpha A^{\beta}_{ij} \langle A_{ij},H\rangle.
	\end{align*}The reaction terms satisfy
	\begin{align*}
	2\sum_{i,j,p,q}|\langle A_{ij},A_{pq}\rangle|^2-4\sum_{i,j}|\mathring{h}_{ij}A_{ij}^- |^2-2|h|^4=2\sum_{i,j,p,q}|\langle A_{ij}^- ,A_{pq}^- \rangle|^2
	\end{align*}	
and
\begin{align}\label{R-R}
	2\sum_{i,j}|R_{ij}^\bot |^2-2\sum_{i,j}|R_{ij}^\bot (\nu_1)|^2&=2|\hat{R}^\bot|^2+2\sum_{i,j}|R_{ij}^\bot (\nu_1)|^2
	\end{align}
where
	\begin{align}\label{7hatR}
	|\hat{R}^\bot|^2=\sum_{i,j,\alpha,\beta\ge 2}\Big(\sum_p|A_{ip}^\alpha A_{jp}^\beta -A_{jp}^\alpha A_{ip}^\beta |^2+|\bar{R}_{ij\alpha\beta}|^2+2\sum_p\langle \bar{R}_{ij\alpha\beta},A_{ip}^{\alpha}A_{jp}^{\beta}-A_{jp}^{\alpha}A_{ip}^{\beta}\rangle\Big).
	\end{align}
As for the gradient terms, taking the form of $\nabla^\bot_i A_{jp}=\nabla^\bot_i A_{jp}^- +\nabla_i h_{jp}\nu_1+h_{jp}\nabla^\bot_i \nu_1$, we see
	\begin{align*}
	|\nabla^\bot A|^2=|\nabla^\bot A^-|^2+|\nabla h|^2+|h|^2|\nabla^\bot \nu_1|^2+2\sum_{i,j}\mathring{h}_{ij}\langle\nabla^\bot A_{ij}^- ,\nabla^\bot_k \nu_1\rangle+2\sum_{i,j,k}\nabla_k \mathring{h}_{ij}\langle\nabla^\bot_k A_{ij}^- ,\nu_1\rangle.
	\end{align*}Thus,
	\begin{align*}
	-2|\nabla^\bot A|^2+2|\nabla h|^2+2|h|^2|\nabla^\bot \nu_1|^2+4\sum_{i,j,k}\mathring{h}_{ij}\langle\nabla^\bot_k A_{ij}^- ,\nabla^\bot_k \nu_1\rangle&=-2|\nabla^\bot A^-|^2\\
	&-4\sum_{i,j,k}\nabla_k \mathring{h}_{ij}\langle\nabla^\bot_k A_{ij}^- ,\nu_1\rangle.
	\end{align*}Putting this all together gives
	\begin{align*}
	\left(\partial_t-\Delta\right)|A^-|^2&=2\sum_{i,j,p,q}|\langle A_{ij}^- ,A_{pq}^- \rangle|^2+2|\hat{R}^\bot|^2+2\sum_{i,j}|R_{ij}^\bot (\nu_1)|^2\\
	&-2|\nabla^\bot A^-|^2+4|H|^{-1}\sum_{i,j,k}\mathring{h}_{ij}\nabla_k |H|\langle\nabla^\bot_k A_{ij}^- ,\nu_1\rangle-4\sum_{i,j,k}\nabla_k\mathring{h}_{ij}\langle\nabla^\bot_k A_{ij}^- ,\nu_1\rangle\\
	&+2|H|^{-2}\sum_{i,j,k,\alpha,\beta\ge 2}\bar{R}_{k\alpha k\beta} H^\alpha A^{\beta}_{ij}\langle A_{ij},H\rangle+\Big(P_\alpha-2B''\Big)\\
	&+2\sum_{i,j}|\bar{R}_{ij}(\nu_1)|^2+4\sum_{i,j,p}\langle\bar{R}_{ij}(\nu_1),\mathring{h}_{ip}A^-_{jp}-\mathring{h}_{jp}A^-_{ip}\rangle,
	\end{align*}where
	\begin{align*}
	B''&:=2|H|^{-2}\sum_{i,j,p,q}\bar{R}_{ipjq}\langle A_{pq},H\rangle\langle A_{ij},H\rangle-2|H|^{-2}\sum_{i,j,k,p}\bar{R}_{kjkp} \langle A_{pi},H\rangle\langle A_{ij},H\rangle\\
	&+|H|^{-2}\sum_{i,j,k,\alpha,\beta}A^\alpha_{ij}\bar{R}_{k\alpha k\beta}H^\beta\langle A_{ij},H\rangle-4|H|^{-2}\sum_{i,j,p,\alpha,\beta}A^\alpha_{ip}\bar{R}_{jp\alpha\beta} H^\beta \langle A_{ij},H\rangle
	\end{align*}and we let
	\begin{align*}
	P_\alpha&= 4\sum_{i,j,p,q}\bar{R}_{ipjq}\big(\sum_{\alpha} A^\alpha_{pq}A^\alpha_{ij}\big)-4\sum_{j,k,p}\bar{R}_{kjkp}\big(\sum_{i,\alpha} A^\alpha_{pi}A^\alpha_{ij}\big)+2\sum_{k,\alpha,\beta}\bar{R}_{k\alpha k\beta}\big(\sum_{i,j} A^\alpha_{ij}A_{ij}^\beta \big)\nonumber\\
	&-8\sum_{j,p,\alpha,\beta}\bar{R}_{jp\alpha\beta}\big(\sum_iA^\alpha_{ip}A_{ij}^\beta \big),
	\end{align*}to be the lower order terms appearing in \eqref{eqn_|A|^2}.
Note that since $\langle A_{ij}^-, \nu_1 \rangle = 0 $, differentiating with respect to $\nabla_k$ gives
	\begin{align*}
	\langle\nabla^\bot_k A_{ij}^- ,\nu_1\rangle&=-\langle A_{ij}^- ,\nabla^\bot_k \nu_1\rangle=-\langle\mathring{A}_{ij},\nabla^\bot_k \nu_1\rangle.
	\end{align*}Also since $\mathring{h}_{ij} = \langle \mathring{A}_{ij}, \nu_1 \rangle$ and from the equation above, we get
	\begin{align*}
	\nabla_k\mathring{h}_{ij}&= \langle\nabla^\bot_k \mathring{A}_{ij},\nu_1\rangle + \langle\mathring{A}_{ij},\nabla^\bot_k \nu_1\rangle =\langle\nabla^\bot_k \mathring{A}_{ij},\nu_1\rangle-\langle\nabla^\bot_kA_{ij}^- ,\nu_1\rangle.
	\end{align*}To simplify our final expression, let us define the tensor
	\begin{align*}
	Q_{ijk}:=\langle\nabla^\bot_k\mathring{A}_{ij},\nu_1\rangle-\langle\nabla^\bot_k A_{ij}^- ,\nu_1\rangle-|H|^{-1}\mathring{h}_{ij}\nabla_k |H|.
	\end{align*}Here we have the lower order terms in the evolution equation for the evolution of $|A^-|^2$. We match them to the evolution of the pinching quantity $f>0$. About the term $P_\alpha-2B''$, we have
	\begin{align*}
	P_\alpha-2B''&=4\sum_{i,j,p,q}\bar{R}_{ipjq}\big(\sum_{\alpha\ge 2} A^\alpha_{pq}A^\alpha_{ij}\big)-4\sum_{j,k,p}\bar{R}_{kjkp}\big(\sum_{i,\alpha\ge 2} A^\alpha_{pi}A^\alpha_{ij}\big)\\
	&+2\sum_{k,\alpha,\beta\ge 2}\bar{R}_{k\alpha k\beta}\big(\sum_{i,j} A^\alpha_{ij}A_{ij}^\beta \big)-8\sum_{j,p,\alpha,\beta\ge 2}\bar{R}_{jp\alpha\beta}\big(\sum_iA^\alpha_{ip}A_{ij}^\beta \big).
	\end{align*}In conclusion, according to Theorem \ref{7B} and \eqref{7eqn_P_a}, we get the following proposition.
\begin{proposition}\label{eqnof|A^-|^2}
The evolution equation of $|A^-|^2$ is
	\begin{align*}
	\left(\partial_t-\Delta\right)|A^-|^2&=2\sum_{i,j,p,q}|\langle A_{ij}^- ,A_{pq}^- \rangle|^2+2|\hat{R}^\bot|^2+2\sum_{i,j}|R_{ij}^\bot (\nu_1)|^2\\
	&-2|\nabla^\bot A^-|^2+4\sum_{i,j,k}Q_{ijk}\langle A_{ij}^- ,\nabla^\bot_k \nu_1\rangle\\
	&+2|H|^{-2}\sum_{i,j,k,\alpha,\beta\ge 2}\bar{R}_{k\alpha k\beta} H^\alpha A^{\beta}_{ij}\langle A_{ij},H\rangle+\Big(P_\alpha-2B''\Big)\\
	&+2\sum_{i,j}|\bar{R}_{ij}(\nu_1)|^2+4\sum_{i,j,p}\langle\bar{R}_{ij}(\nu_1),\mathring{h}_{ip}A^-_{jp}-\mathring{h}_{jp}A^-_{ip}\rangle,
	\end{align*}where
	\begin{align*}
	Q_{ijk}:=\langle\nabla^\bot_k\mathring{A}_{ij},\nu_1\rangle-\langle\nabla^\bot_k A_{ij}^- ,\nu_1\rangle-|H|^{-1}\mathring{h}_{ij}\nabla_k |H|.
	\end{align*}\end{proposition}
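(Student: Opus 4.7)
The plan is to derive this evolution equation by combining the orthogonal decomposition $|A^-|^2 = |A|^2 - |A^+|^2 = |A|^2 - |H|^{-2}\sum_{i,j}|\langle A_{ij},H\rangle|^2$ (valid at points where $H\neq 0$) with the evolution equations already in hand. First I would apply the quotient rule for the parabolic operator $\partial_t - \Delta$ to $\frac{\sum_{i,j}|\langle A_{ij},H\rangle|^2}{|H|^2}$, picking up terms from the evolution of $|H|^2$ (equation \eqref{eqn_|H|^2}) and from the evolution of the numerator. The numerator evolution is the content of Lemma \ref{7B}, which in turn is obtained by contracting the evolution equation \eqref{eqn_A} for $A_{ij}$ against $H$ and the evolution equation \eqref{eqn_H} for $H$ against $A_{ij}$, then pairing each with a factor $\langle A_{ij},H\rangle$. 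Uhlenbeck's trick allows us to treat indices as if the metric were fixed in time, eliminating the derivative of $g^{ij}$.

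Once both $(\partial_t-\Delta)|A|^2$ (from Lemma \ref{eqn_AHCP}) and $(\partial_t-\Delta)\frac{\sum_{i,j}|\langle A_{ij},H\rangle|^2}{|H|^2}$ are written out, I would subtract, and then perform the algebraic simplification using the (B1) decomposition $A_{ij} = A^-_{ij} + h_{ij}\nu_1$. Key identities that feed into the simplification are $\nabla^\perp_k H = \nabla_k |H|\,\nu_1 + |H|\nabla^\perp_k\nu_1$, the orthogonality $\langle \nabla^\perp_k\nu_1,\nu_1\rangle=0$, and the splitting of $\nabla^\perp_k A_{ij}$ into components $\langle\nabla^\perp_k A^-_{ij},\nu_1\rangle + \nabla_k h_{ij}$ along $\nu_1$ and $\nabla^\perp_k A^-_{ij} + h_{ij}\nabla^\perp_k\nu_1$ orthogonal to $\nu_1$. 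For the reaction terms I would invoke the Ricci equation in the vector form \eqref{7eqn_Rbot} to exchange $|R^\perp_{ij}|^2 - |R^\perp_{ij}(\nu_1)|^2$ for $|\hat{R}^\perp|^2$ as defined in \eqref{7hatR}, and use the identity \eqref{7eqn3} to recognise $|R^\perp_{ij}(\nu_1)|^2$ inside the quartic second fundamental form terms.

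To isolate the gradient cross-terms into a single tensor $Q_{ijk}$, I would observe that differentiating $\langle A^-_{ij},\nu_1\rangle=0$ gives $\langle\nabla^\perp_k A^-_{ij},\nu_1\rangle = -\langle A^-_{ij},\nabla^\perp_k\nu_1\rangle$, and differentiating $\mathring{h}_{ij}=\langle\mathring{A}_{ij},\nu_1\rangle$ gives $\nabla_k\mathring{h}_{ij}=\langle\nabla^\perp_k\mathring{A}_{ij},\nu_1\rangle - \langle\nabla^\perp_k A^-_{ij},\nu_1\rangle$. These two identities let me rewrite the residual sum $4|H|^{-1}\sum \mathring{h}_{ij}\nabla_k|H|\langle\nabla^\perp_k A^-_{ij},\nu_1\rangle - 4\sum\nabla_k\mathring{h}_{ij}\langle\nabla^\perp_k A^-_{ij},\nu_1\rangle$ that survives after collecting $-2|\nabla^\perp A^-|^2$ from the full $-2|\nabla^\perp A|^2$ term, repackaged as $4\sum Q_{ijk}\langle A^-_{ij},\nabla^\perp_k\nu_1\rangle$ with $Q_{ijk}$ as in the statement.

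Finally, the ambient-curvature terms are handled by decomposing the trilinear expression $2|H|^{-2}\sum \bar{R}_{k\alpha k\beta}H^\alpha A^\beta_{ij}\langle A_{ij},H\rangle$ along $\nu_1$ (the $\beta=1$ component being absorbed into the $-2|h|^2|H|^{-2}\bar{R}_{k\alpha k\beta}H^\alpha H^\beta$ term from the quotient-rule expansion) and orthogonal to $\nu_1$ (the $\beta\geq 2$ residue appearing in the statement); the remaining $\bar{R}$-terms inherited from $P_\alpha - 2B'$ are reorganised into the tensor $P_\alpha - 2B''$ together with the genuinely ``transverse'' Ricci and scalar curvature contributions $2\sum|\bar{R}_{ij}(\nu_1)|^2 + 4\sum\langle\bar{R}_{ij}(\nu_1),\mathring{h}_{ip}A^-_{jp}-\mathring{h}_{jp}A^-_{ip}\rangle$ extracted via \eqref{7eqn_Rbot} and \eqref{R-R}. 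The principal obstacle in this proof is not conceptual but organisational: it is the careful bookkeeping of the many ambient curvature contractions $\bar{R}_{ipjq}$, $\bar{R}_{kjkp}$, $\bar{R}_{k\alpha k\beta}$, $\bar{R}_{jp\alpha\beta}$ across three frames (tangent, along $\nu_1$, and transverse in the normal bundle), ensuring that every component lands in the correct one of the four groups (reaction, gradient, mixed via $Q_{ijk}$, or ambient-curvature) in the final expression.
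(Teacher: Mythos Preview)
Your proposal is correct and follows essentially the same route as the paper: decompose $|A^-|^2 = |A|^2 - |H|^{-2}\sum|\langle A_{ij},H\rangle|^2$, combine the evolution of $|A|^2$ from Lemma~\ref{eqn_AHCP} with the quotient-rule expansion built on Lemma~\ref{7B}, then regroup reaction terms via \eqref{7eqn_Rbot}--\eqref{7hatR}, gradient terms via the $(B1)$ splitting of $\nabla^\perp A$, and the residual cross-terms into $Q_{ijk}$ using the two differentiation identities you name. The handling of the ambient-curvature pieces (splitting the $\bar R_{k\alpha k\beta}$ contraction into $\beta=1$ and $\beta\ge 2$ parts and packaging the remainder as $P_\alpha - 2B''$) also matches the paper exactly.
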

We consider the function $f=d_m+c_m |H|^2-|A|^2$. The assumption of the theorem is $f>0$ everywhere on $\mathcal{M}_0$. As $\mathcal{M}_0$ is compact, there exist constants $\e_0,\e_1>0$ depending on $\mathcal{M}_0$, such that $f\ge\e_1|H|^2+\e_0$, on $\mathcal{M}_0$. By Theorem 2 in \cite{AnBa10}, $f\ge\e_1|H|^2+\e_0$, on $\mathcal{M}_t$, for every $t\in[0,T)$ and consequently $|H|>0$ is preserved as well. Recall, for codimension $k=1$, we have that
	\begin{align*}
	&c_m=\frac{1}{m-\tilde{k}} \ \text{ and} \ \ d_m=\frac{\tilde{k}(m-3-4k)}{m}, \ \text{ for} \ \ m\ge\frac{18+16\tilde{k}}{7},\ \ \tilde{k}\ge 1,\ \ 1=2n-m.
	\end{align*}
For codimension $k\ge 2$, we have 
	\begin{align*}
&c_m=\frac{1}{m-\tilde{k}} \ \text{ and} \ \ d_m=\frac{\tilde{k}(m-3-4k)}{m}, \ \ m\ge4\tilde{k}, \ \ k=2n-m,\\
& m>\frac{43\tilde{k}+18+\sqrt{1849\tilde{k}^2+3060\tilde{k}+324}}{14}, \ \ \tilde{k}\ge 1, \ \ 2\le k<\frac{m-3}{4}.
	\end{align*}

Since $|A|^2+\e_0\le(c_m-\e_1)|H|^2$, for every $t\in[0,T)$, without loss of generality, we may replace $c_m$ by $c_m-\e_1$ and assume throughout the proof that
	\begin{align*}
	c_m\le\frac{1}{m-\tilde{k}}.
	\end{align*}The strictness of the latter inequality depends on initial data through $\e_1$. We still have $f\ge\e_0>0$, for every $t$.
Let $\delta>0$ be a small constant to be determined later in the proof. By previous work, the evolution equation for f is
	\begin{align}\label{evoloff}
	\left(\partial_t-\Delta\right)f&=2(|\nabla^\bot A|^2-c_m|\nabla^\bot H|^2)+2\Big(c_m\sum_{i,j}|\langle A_{ij},H\rangle|^2-\sum_{i,j,p,q}|\langle A_{ij},A_{pq}\rangle|^2-\sum_{i,j}|R_{ij}^\bot |^2\Big)\nonumber\\
	&+2c_m\sum_{k,\alpha,\beta} \bar{R}_{k\alpha k\beta} H^\alpha H^\beta-P_{\alpha}.
	\end{align}
We let $C$ denote an arbitrary constant depending upon the dimension $m,K_1,K_2$ and $d_m$, which may change from line to line. The pinching condition implies both terms on the right hand side of the equation for $f$ are non negative at each point in space-time. The first step of the proof and the main effort is to analyse the evolution equation $\frac{|A^-|^2}{f}$. We will show this ratio satisfies a favourable evolution equation with a right hand side has a nonpositive term. Specifically, we will show that
	\begin{align}\label{7initialclaim}
	\left(\partial_t-\Delta\right)\frac{|A^-|^2}{f}&\le2\Big\langle\nabla\frac{|A^-|^2}{f},\nabla\log f\Big\rangle-\delta\frac{|A^-|^2}{f^2}\left(\partial_t-\Delta\right)f+C\frac{|A^-|^2}{f}\nonumber\\
	&+\frac{1}{f}\Big(2\sum_{i,j}|\bar{R}_{ij}(\nu_1)|^2+4\sum_{i,j,p}\langle\bar{R}_{ij}(\nu_1),\mathring{h}_{ip}A^-_{jp}-\mathring{h}_{jp}A^-_{ip}\rangle \Big)+C',
	\end{align}for $C,C'$ constants, that depend on $m,K_1,K_2$ and $d_m$. Then, since at the limit the background space is Euclidean, the result will follow from the maximum principle. By what we have shown this far, the evolution equation of $\frac{|A^-|^2}{f}$ is
	\begin{align*}
	\Big(\partial_t&-\Delta\Big)\frac{|A^-|^2}{f}=\frac{1}{f}\left(\partial_t-\Delta\right)|A^-|^2-|A^-|^2\frac{1}{f^2}\left(\partial_t-\Delta\right)f+2\Big\langle\nabla\frac{|A^-|^2}{f},\nabla \log f\Big\rangle\\
	&=\frac{1}{f}\Big(2\sum_{i,j,p,q}|\langle A_{ij}^- ,A_{pq}^- \rangle|^2+2|\hat{R}^\bot|^2+2\sum_{i,j}|R_{ij}^\bot (\nu_1)|^2\Big)\\
	&+\frac{1}{f}\Big(-2|\nabla^\bot A^-|^2+4\sum_{i,j,k}Q_{ijk} \langle A_{ij}^- ,\nabla^\bot_k \nu_1\rangle+2|H|^{-2}\sum_{i,j,k,\alpha,\beta\ge 2}\bar{R}_{k\alpha k\beta} H^\alpha A^{\beta}_{ij}\langle A_{ij},H\rangle\Big)\\
	&+\frac{1}{f}\Big(2\sum_{i,j}|\bar{R}_{ij}(\nu_1)|^2+4\sum_{i,j,p}\langle\bar{R}_{ij}(\nu_1),\mathring{h}_{ip}A^-_{jp}-\mathring{h}_{jp}A^-_{ip}\rangle \Big)\\
	&-|A^-|^2\frac{1}{f^2}\Big(2(|\nabla^\bot A|^2-c_m|\nabla^\bot H|^2)\Big)\\
	&-|A^-|^2\frac{1}{f^2}\Big(2\Big(c_m\sum_{i,j}|\langle A_{ij},H\rangle|^2-\sum_{i,j,p,q}|\langle A_{ij},A_{pq}\rangle|^2-\sum_{i,j}|R_{ij}^\bot |^2\Big)\Big)\\
	&+2\Big\langle\nabla\frac{|A^-|^2}{f},\nabla\log f\Big\rangle\\
	&+\frac{1}{f}\Big(P_\alpha-2B''\Big)-|A^-|^2\frac{1}{f^2}\Big(2c_m\sum_{k,\alpha,\beta}\bar{R}_{k\alpha k\beta} H^\alpha H^\beta-P_\alpha\Big).
	\end{align*}Rearranging these terms, we have
	\begin{align*}
	&\left(\partial_t-\Delta\right)\frac{|A^-|^2}{f}=\frac{1}{f}\Big(2\sum_{i,j,p,q}|\langle A_{ij}^- ,A_{pq}^- \rangle|^2+2|\hat{R}^\bot|^2+2\sum_{i,j}|R_{ij}^\bot (\nu_1)|^2\Big)\\
	&+\frac{1}{f}\Big(-2\frac{|A^-|^2}{f}\Big(c_m\sum_{i,j}|\langle A_{ij},H\rangle|^2-\sum_{i,j,p,q}|\langle A_{ij},A_{pq}\rangle|^2-\sum_{i,j}|R_{ij}^\bot |^2\Big)\Big)\\
	&+\frac{1}{f}\Big(2\sum_{i,j}|\bar{R}_{ij}(\nu_1)|^2+4\sum_{i,j,p}\langle\bar{R}_{ij}(\nu_1),\mathring{h}_{ip}A^-_{jp}-\mathring{h}_{jp}A^-_{ip}\rangle \Big)\\
	&+\frac{1}{f}\Big(4\sum_{i,j,p,q}\bar{R}_{ipjq}\big(\sum_{\alpha\ge 2} A^\alpha_{pq}A^\alpha_{ij}\big)-4\sum_{j,k,p}\bar{R}_{kjkp}\big(\sum_{i,\alpha\ge 2} A^\alpha_{pi}A^\alpha_{ij}\big)+2\sum_{k,\alpha,\beta\ge 2}\bar{R}_{k\alpha k\beta}\big(\sum_{i,j} A^\alpha_{ij}A_{ij}^\beta \big)\Big)\\
	&+\frac{1}{f}\Big(2|H|^{-2}\sum_{i,j,k,\alpha,\beta\ge 2}\bar{R}_{k\alpha k\beta} H^\alpha A^{\beta}_{ij}\langle A_{ij},H\rangle-8\sum_{j,p,\alpha,\beta\ge 2}\bar{R}_{jp\alpha \beta}\big(\sum_iA^\alpha_{ip}A_{ij}^\beta \big)\Big)\\
	&+\frac{1}{f}\Big(4\sum_{i,j,k}Q_{ijk}\langle A_{ij}^- ,\nabla^\bot_k \nu_1\rangle-2|\nabla^\bot A^-|^2-2\frac{|A^-|^2}{f}(|\nabla^\bot A|^2-c_m|\nabla^\bot H|^2)\Big)\\
	&+\frac{1}{f}\Big(\frac{|A^-|^2}{f}\big(4\sum_{i,j,p,q}\bar{R}_{ipjq}\big(\sum_{\alpha} A^\alpha_{pq}A^\alpha_{ij}\big)-4\sum_{j,k,p}\bar{R}_{kjkp}\big(\sum_{i,\alpha} A^\alpha_{pi}A^\alpha_{ij}\big)\big)+2\sum_{k,\alpha,\beta}\bar{R}_{k\alpha k\beta}\big(\sum_{i,j} A^\alpha_{ij}A_{ij}^\beta \big)\Big)\\
	&+\frac{1}{f}\Big(\frac{|A^-|^2}{f}\Big(-2c_m\sum_{k,\alpha,\beta}\bar{R}_{k\alpha k\beta}H^\alpha H^\beta-8\sum_{j,p,\alpha,\beta}\bar{R}_{jp\alpha\beta}\big(\sum_iA^\alpha_{ip}A_{ij}^\beta \big)\Big)\Big)\\
	&+2\Big\langle \nabla\frac{|A^-|^2}{f},\nabla\log f\Big\rangle.
	\end{align*}Let us give a brief explanation of the above evolution equation. The first two lines on the right hand side are the higher order terms and the terms in the third line are Euclidean terms. The terms in the fourth and fifth line are lower order terms, that are orthogonal to the principal direction. The terms on the sixth and nineth line are gradient terms and the terms on the seventh and eighth line are lower order terms, both in the principal direction and orthogonal to the principal direction. \\
	We begin by estimating the reaction terms. We will make use of two estimates. The first estimate is proven on page 372 in \cite{AnBa10} Section 3. The second estimate is a matrix inequality, which is in \cite{Li1992}.
\begin{lemma}\label{74.1}
	\begin{align}\label{7eq4.5}
	\sum_{i, j}| \mathring{h}_{ij} A^-_{ij}|^2+\sum_{i,j}|R_{ij}^{\perp}(\nu_1)|^2 \leq 2|\mathring{h}|^2|{A^-}|^2+\sum_{i,j}|\bar{R}_{ij}(\nu_1)|^2+4|\bar{R}_{ij}(\nu_1)||\mathring{h}||A^-|,
	\end{align}	\begin{align}\label{7eq4.6}
	\sum_{i,j,p,q}|\langle A^-_{ij}, A^-_{pq}\rangle|^2+\big|\hat{R}^\perp\big|^2 \leq\frac{3}{2}|A^-|^4+\sum_{\alpha, \beta\ge 2}\Big(\sum_{i,j}|\bar{R}_{ij\alpha\beta}|^2+4|\bar{R}_{ij\alpha\beta}||A^-|^2\Big).
	\end{align}
\end{lemma}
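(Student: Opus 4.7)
The plan is to treat each of the two bounds as a perturbation of a known Euclidean algebraic inequality by the ambient curvature, applying Cauchy--Schwarz to control the cross contributions. The two Euclidean inputs are the estimate on page~372 of \cite{AnBa10} (for the first inequality) and the matrix inequality of Li \cite{Li1992} generalising Lu's bound to high codimension (for the second inequality).

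For the first inequality, I start from the identity
\begin{align*}
R^{\perp}_{ij}(\nu_1)=\bar{R}_{ij}(\nu_1)+\mathring{h}_{ip}A^{-}_{jp}-\mathring{h}_{jp}A^{-}_{ip},
\end{align*}
established earlier in the section, and expand
\begin{align*}
|R^{\perp}_{ij}(\nu_1)|^2 &= |\bar{R}_{ij}(\nu_1)|^2 + |\mathring{h}_{ip}A^{-}_{jp}-\mathring{h}_{jp}A^{-}_{ip}|^2 \\
&\quad + 2\langle \bar{R}_{ij}(\nu_1),\,\mathring{h}_{ip}A^{-}_{jp}-\mathring{h}_{jp}A^{-}_{ip}\rangle.
\end{align*}
Summing over $i,j$ and adding $\sum_{i,j}|\mathring{h}_{ij}A^{-}_{ij}|^2$, the combination
\begin{align*}
\sum_{i,j}|\mathring{h}_{ij}A^{-}_{ij}|^2 + \sum_{i,j}|\mathring{h}_{ip}A^{-}_{jp}-\mathring{h}_{jp}A^{-}_{ip}|^2
\end{align*}
is purely Euclidean and is precisely the quantity bounded by $2|\mathring{h}|^2|A^{-}|^2$ in the Andrews--Baker computation. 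For the remaining cross term, Cauchy--Schwarz in the $p$-index yields $|\mathring{h}_{ip}A^{-}_{jp}-\mathring{h}_{jp}A^{-}_{ip}|\le 2|\mathring{h}||A^{-}|$, producing the stated contribution $4|\bar{R}_{ij}(\nu_1)||\mathring{h}||A^{-}|$ after summing.

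For the second inequality, I expand $|\hat{R}^{\perp}|^2$ according to \eqref{7hatR} into the three summands
\begin{align*}
\sum_{\alpha,\beta\ge 2}\sum_{i,j}\Bigl|\sum_p(A^{\alpha}_{ip}A^{\beta}_{jp}-A^{\beta}_{ip}A^{\alpha}_{jp})\Bigr|^2,\quad \sum_{\alpha,\beta\ge 2}\sum_{i,j}|\bar{R}_{ij\alpha\beta}|^2,
\end{align*}
and the cross term $2\sum_{\alpha,\beta\ge 2}\sum_{i,j,p}\bar{R}_{ij\alpha\beta}(A^{\alpha}_{ip}A^{\beta}_{jp}-A^{\beta}_{ip}A^{\alpha}_{jp})$. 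Adding $\sum_{i,j,p,q}|\langle A^{-}_{ij},A^{-}_{pq}\rangle|^2$ to the first summand produces the Euclidean combination to which Li's matrix inequality directly applies, giving the bound $\tfrac{3}{2}|A^{-}|^4$. The second summand appears verbatim on the right-hand side. Finally, Cauchy--Schwarz on the $p$-sum gives $|\sum_p(A^{\alpha}_{ip}A^{\beta}_{jp}-A^{\beta}_{ip}A^{\alpha}_{jp})|\le 2|A^{-}|^2$, so the cross term is controlled by $4|\bar{R}_{ij\alpha\beta}||A^{-}|^2$ as required.

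The main obstacle is verifying that the cited Euclidean identities apply cleanly in our setting. The essential point is that $A^{-}$ takes values in the subbundle $\hat{E}\subset N\mathcal{M}_t$ orthogonal to $\nu_1$ with respect to a (B1)-frame, so all contractions reduce to computations in a fixed orthonormal basis, and the Andrews--Baker and Li inequalities transfer without change. Checking this transfer, and in particular confirming that both sides of the grouped Euclidean sums match the quantities treated in \cite{AnBa10} and \cite{Li1992}, is the bookkeeping core of the proof.
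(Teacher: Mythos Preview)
Your proposal is correct and follows essentially the same route as the paper: decompose each quantity into its Euclidean piece plus ambient-curvature cross terms, apply the Andrews--Baker estimate (for \eqref{7eq4.5}) and Li's matrix inequality (for \eqref{7eq4.6}) to the Euclidean piece, and control the cross terms by Cauchy--Schwarz. The only difference is that the paper reproduces the Andrews--Baker eigenbasis computation in full (diagonalising $\mathring{h}$ and splitting into diagonal and off-diagonal $A^\beta$ entries) rather than citing it as a black box, but the underlying argument is the same.
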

\begin{proof} The arguments given in \cite{AnBa10} to prove inequality \eqref{7eq4.5} are simple and short, so we will repeat them in our notation here. We will express inequality \eqref{7eq4.6} so that it is an immediate consequence of Lemma 3.3 in \cite{Li1992}.
Fix any point $p \in \mathcal{M}$ and time $t \in[0, T)$. Let $e_1, \ldots, e_m$ be an orthonormal basis which identifies $T_p \mathcal{M} \cong \mathbb{R}^m$ at time $t$ and then choose $\nu_2, \ldots, \nu_k$ to be a basis of the orthogonal complement of principal normal $\nu_1$ in $N_p \mathcal{M}$ at time $t$. For each $\beta \in\{2, \ldots, k\}$, define a matrix $A_{\beta}=\left\langle A, \nu_{\beta}\right\rangle$, whose components are given by $\left(A_{\beta}\right)_{i j}=A_{i j \beta}$.


Then $A^-=\sum_{\beta\ge 2} A_{\beta} \nu_{\beta}$. We also have $\mathring{h}=\langle\mathring{A}, \nu_1\rangle$.
To prove \eqref{7eq4.5}, let $\lambda_1, \ldots, \lambda_m$ denote the eigenvalues of $\mathring{h}$. Assume the orthonormal basis is an eigenbasis of $\mathring{h}$. Now
	\begin{align*}
	\sum_{i, j}| \mathring{h}_{i j} A^-_{i j}|^2=\sum_{\beta\ge 2} \sum_{i, j,p,q} \mathring{h}_{i j} \mathring{h}_{pq} A^\beta_{i j } A^\beta_{pq}=\sum_{\beta\ge 2}\big(\sum_{i, j} \mathring{h}_{i j} A^\beta_{i j }\big)^2 =\sum_{\beta\ge 2}\big(\sum_i \lambda_i A^\beta_{i i }\big)^2.
	\end{align*}By Cauchy-Schwarz,
	\begin{align}\label{7eq4.7}
	\sum_{i, j}\big| \mathring{h}_{i j} A^-_{i j}\big|^2 \leq \sum_{\beta\ge 2}\big(\sum_i \lambda_j^2\big)\big(\sum_i (A^\beta_{i i})^2\big)=|\mathring{h}|^2 \sum_{\beta\ge 2} \sum_i(A^\beta_{i i})^2.
	\end{align}Now, using
	\begin{align}\label{7eq2.40}
	\sum_{i,j}|R^\bot_{ij}(\nu_1)|^2=\sum_{i,j}|\bar{R}_{ij}(\nu_1)|^2+\sum_{i,j,k}|\mathring{h}_{ik}A^-_{jk}-\mathring{h}_{jk}A^-_{ik}|^2+2\sum_{i,j,p}\langle \bar{R}_{ij}(\nu_1),\mathring{h}_{ip}A^-_{jp}-\mathring{h}_{jp}A^-_{ip}\rangle,
	\end{align}and \eqref{Berger} we have
	\begin{align*}
	|R_{i j}^{\perp}(\nu_1)|^2&=\sum_{\beta\ge 2} \sum_{i, j,k}\big(\mathring{h}_{i k} A^\beta_{j k }-\mathring{h}_{j k} A^\beta_{ik}\big)^2+\sum_{i,j}|\bar{R}_{ij}(\nu_1)|^2+2\sum_{i,j,p}\langle \bar{R}_{ij}(\nu_1),\mathring{h}_{ip}A^-_{jp}-\mathring{h}_{jp}A^-_{ip}\rangle \\
	&=\sum_{\beta\ge 2} \sum_{i, j}\big(\lambda_i-\lambda_j\big)^2 (A^\beta_{i j})^2 +\sum_{i,j}|\bar{R}_{ij}(\nu_1)|^2+2\sum_{i,j,p}\langle \bar{R}_{ij}(\nu_1),\mathring{h}_{ip}A^-_{jp}-\mathring{h}_{jp}A^-_{ip}\rangle \\
	&=\sum_{\beta\ge 2} \sum_{i \neq j}\big(\lambda_i-\lambda_j\big)^2 (A^\beta_{i j })^2+\sum_{i,j}|\bar{R}_{ij}(\nu_1)|^2+2\sum_{i,j,p}\langle \bar{R}_{ij}(\nu_1),\mathring{h}_{ip}A^-_{jp}-\mathring{h}_{jp}A^-_{ip}\rangle.
	\end{align*}
Since $\left(\lambda_i-\lambda_j\right)^2 \leq 2\left(\lambda_i^2+\lambda_j^2\right) \leq 2|\mathring{h}|^2$, we have
	\begin{align}\label{7eq4.8}
	\sum_{i,j}|R_{i j}^{\perp}(\nu_1)|^2 \leq 2|\mathring{h}|^2 \sum_{\beta\ge 2} \sum_{i \neq j} (A^\beta_{i j})^2+\sum_{i,j}|\bar{R}_{ij}(\nu_1)|^2+4|\bar{R}_{ij}(\nu_1)||\mathring{h}||A^-|.
	\end{align}Summing \eqref{7eq4.7} and \eqref{7eq4.8}, we obtain
	\begin{align*}
	\sum_{i, j}| \mathring{h}_{i j} A^-_{i j}|^2+\sum_{i,j}|R_{i j}^{\perp}(\nu_1)|^2&\leq|\mathring{h}|^2 \sum_{\beta\ge 2} \sum_i (A^\beta_{i i})^2+2|\mathring{h}|^2 \sum_{\beta\ge 2} \sum_{i \neq j} (A^\beta_{i j})^2+\sum_{i,j}|\bar{R}_{ij}(\nu_1)|^2\\
	&+4|\bar{R}_{ij}(\nu_1)||\mathring{h}||A^-|\\
	&\leq 2|\mathring{h}|^2|A^-|^2+\sum_{i,j}|\bar{R}_{ij}(\nu_1)|^2+4|\bar{R}_{ij}(\nu_1)||\mathring{h}||A^-|,
	\end{align*}which is \eqref{7eq4.5}.
To establish \eqref{74.6}, for $\alpha, \beta \in\{2, \ldots, k\}$ define
	\begin{align*}
	S_{\alpha \beta}:=\operatorname{tr}\left(A_{\alpha} A_{\beta}\right)=\sum_{i, j} A^\alpha_{i j } A^\beta_{i j} \quad \text{ and } \quad S_{\alpha}:=\left|A_{\alpha}\right|^2=\sum_{i, j} A^\alpha_{i j} A^\alpha_{i j}.
	\end{align*}Let $S:=S_2+\cdots+S_k=|A^-|^2$. Now
	\begin{align*}
	\sum_{i,j,p,q}|\langle A^-_{i j}, A^-_{pq}\rangle|^2&=\sum_{i, j, p,q} \sum_{\alpha, \beta\ge 2} A^\alpha_{i j} A^\alpha_{pq} A^\beta_{i j } A^\beta_{pq} \\
	&=\sum_{\alpha, \beta\ge 2}(\sum_{i, j} A^\alpha_{i j} A^\beta_{i j})(\sum_{p,q} A^\alpha_{pq } A^\beta_{pq}) \\
	&=\sum_{\alpha, \beta\ge 2} S_{\alpha \beta}^2.
	\end{align*}In addition, recalling \eqref{7hatR}, we may write
	\begin{align*}
	\big|\hat{R}^\perp\big|^2=\sum_{\alpha, \beta\ge 2}\Big(\left|A_{\alpha} A_{\beta}-A_{\beta} A_{\alpha}\right|^2+\sum_{i,j}|\bar{R}_{ij\alpha\beta}|^2+2\sum_{i,j,p}\langle \bar{R}_{ij\alpha\beta},A_{ip}^{\alpha}A_{jp}^{\beta}-A_{jp}^{\alpha}A_{ip}^{\beta}\rangle\Big)
	\end{align*}where $\left(A_{\alpha} A_{\beta}\right)_{i j}=\left(A_{\alpha}\right)_{i k}\left(A_{\beta}\right)_{k j}=\left(A_{\alpha}\right)_{i k}\left(A_{\beta}\right)_{j k}$ denotes standard matrix multiplication and $|\cdot|$ is the usual square norm of the matrix. We see that inequality \eqref{7eq4.6} is equivalent to
	\begin{align}\label{7eq4.9}
	\sum_{\alpha, \beta\ge 2}\left|A_{\alpha} A_{\beta}-A_{\beta} A_{\alpha}\right|^2+\sum_{\alpha, \beta\ge 2} S_{\alpha \beta}^2 \leq \frac{3}{2} S^2.
	\end{align}Therefore, we have
	\begin{align*}
	\sum_{i,j,p,q}|\langle A^-_{ij},A^-_{pq}\rangle|^2+|\hat{R}^\bot|^2&\le\frac{3}{2}|A^-|^4+\sum_{i,j,\alpha, \beta\ge 2}\Big(|\bar{R}_{ij\alpha\beta}|^2+2\sum_p\langle \bar{R}_{ij\alpha\beta},A_{ip}^{\alpha}A_{jp}^{\beta}-A_{jp}^{\alpha}A_{ip}^{\beta}\rangle\Big)\\
	&\le\frac{3}{2}|A^-|^4+\sum_{\alpha, \beta\ge 2}\Big(\sum_{i,j}|\bar{R}_{ij\alpha\beta}|^2+4|\bar{R}_{ij\alpha\beta}||A^-|^2\Big).
	\end{align*}Now if $k=2$, inequality \eqref{7eq4.6} is trivial since $|\hat{R}^{\perp}|^2=0$ and $\sum_{i,j,p,q}|\langle A^-_{i j}, A^-_{pq}\rangle|^2=|A^-|^4$. Otherwise, if $k \geq 3$, inequality \eqref{7eq4.9} follows Lemma 3.3 in \cite{Li1992}. This completes the proof.
\end{proof}
As an immediate consequence of the previous lemma, we have the following estimate for the reaction terms coming from the evolution of $|A^-|^2$.
\begin{lemma} [Upper bound for the reaction terms of $\left(\partial_t-\Delta\right)|A^-|^2$]\label{74.2}
	\begin{align}\label{7eq4.10}
	\sum_{i,j,p,q}|\langle A^-_{i j}, A^-_{pq}\rangle|^2+|\hat{R}^{\perp}|^2+\sum_{i,j}|R_{i j}^{\perp}(\nu_1)|^2&\leq\frac{3}{2}|A^-|^4+\sum_{\alpha, \beta\ge 2}\Big(\sum_{i,j}|\bar{R}_{ij\alpha\beta}|^2+4|\bar{R}_{ij\alpha\beta}||A^-|^2\Big)\nonumber\\
	&+2|\mathring{h}|^2|A^-|^2+\sum_{i,j}|\bar{R}_{ij}(\nu_1)|^2+4|\bar{R}_{ij}(\nu_1)||\mathring{h}||A^-|.
	\end{align}\end{lemma}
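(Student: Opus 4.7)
The statement is labelled as an ``immediate consequence'' of Lemma \ref{74.1}, and indeed the proof amounts to nothing more than adding the two inequalities \eqref{7eq4.5} and \eqref{7eq4.6} and discarding a nonnegative quantity. The plan is therefore very short.

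First I would recall the two bounds provided by Lemma \ref{74.1}. Inequality \eqref{7eq4.5} controls $\sum_{i,j}|\mathring h_{ij}A^-_{ij}|^2+\sum_{i,j}|R^\perp_{ij}(\nu_1)|^2$ in terms of $2|\mathring h|^2|A^-|^2$ plus ambient curvature terms $\sum_{i,j}|\bar R_{ij}(\nu_1)|^2+4|\bar R_{ij}(\nu_1)||\mathring h||A^-|$. Inequality \eqref{7eq4.6} controls $\sum_{i,j,p,q}|\langle A^-_{ij},A^-_{pq}\rangle|^2+|\hat R^\perp|^2$ by $\tfrac32|A^-|^4$ plus the normal-curvature ambient terms $\sum_{\alpha,\beta\ge 2}\bigl(\sum_{i,j}|\bar R_{ij\alpha\beta}|^2+4|\bar R_{ij\alpha\beta}||A^-|^2\bigr)$.

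Next I would simply sum these two estimates. The left-hand side becomes
\[
\sum_{i,j,p,q}|\langle A^-_{ij},A^-_{pq}\rangle|^2+|\hat R^\perp|^2+\sum_{i,j}|R^\perp_{ij}(\nu_1)|^2+\sum_{i,j}|\mathring h_{ij}A^-_{ij}|^2,
\]
which is bounded above by the sum of the two right-hand sides. Since $\sum_{i,j}|\mathring h_{ij}A^-_{ij}|^2\ge 0$, dropping it on the left only enlarges the right-hand side relation, so the resulting inequality with the term removed is still valid. This produces precisely the stated bound \eqref{7eq4.10}.

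There is no serious obstacle here; the content is already packaged into Lemma \ref{74.1}. The only thing to be mindful of is keeping track of the index range $\alpha,\beta\ge 2$ in the $\hat R^\perp$ contribution versus the single principal direction $\nu_1$ appearing in the $R^\perp_{ij}(\nu_1)$ and $\mathring h$ contributions, so that the two groups of ambient curvature terms are correctly identified as $|\bar R_{ij\alpha\beta}|$-type (coming from $\hat E$) and $|\bar R_{ij}(\nu_1)|$-type (coming from $E_1$). Once this bookkeeping is done, the lemma follows at once from the additivity of the two inequalities in Lemma \ref{74.1}.
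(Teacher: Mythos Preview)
Your proposal is correct and matches the paper's approach exactly: the paper's proof consists of the single sentence ``The proof follows from Lemma \ref{74.1},'' and your explanation of summing \eqref{7eq4.5} and \eqref{7eq4.6} and discarding the nonnegative term $\sum_{i,j}|\mathring h_{ij}A^-_{ij}|^2$ is precisely what is intended.
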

\begin{proof} The proof follows from Lemma \ref{74.1}.
\end{proof}
Next we express the reaction term in the evolution of $f$ in terms of $A^-, \mathring{h}$, and $|H|$. In view of the definition of $f$, observe that
	\begin{align}\label{7eq4.11}
	\frac{m c_m-1}{m}|H|^2=|A^-|^2+|\mathring{h}|^2+f-d_m.
	\end{align}In the following lemma, we get a lower bound for the reaction terms in the evolution of $f$.
\begin{lemma} [Lower bound for the reaction terms of $\left(\partial_t-\Delta\right) f$]\label{7lemma4.3} \ \newline
If $\frac{1}{m}<c_m \leq \frac{1}{m-\tilde{k}}$, then
	\begin{align}\label{7eq4.12}
	\frac{|A^-|^2}{f}&\big(c_m\sum_{i,j}\left|\left\langle A_{i j}, H\right\rangle\right|^2-\sum_{i,j,p,q}\left|\left\langle A_{i j}, A_{pq}\right\rangle\right|^2-\sum_{i,j}|R_{i j}^{\perp}|^2\big)\geq \frac{2}{m c_m-1}|A^-|^4+\frac{m c_m}{m c_m-1}|\mathring{h}|^2|A^-|^2\nonumber\\
	&-\frac{|A^-|^2}{f}\Big(\sum_{\alpha,\beta\ge 2}\Big(\sum_{i,j}|\bar{R}_{ij\alpha\beta}|^2+4|\bar{R}_{ij\alpha\beta}||A^-|^2\Big)+2\sum_{i,j}|\bar{R}_{ij}(\nu_1)|^2+8|\bar{R}_{ij}(\nu_1)||\mathring{h}||A^-|\Big)\nonumber\\
	&-\frac{|A^-|^2}{f}\Big(\frac{1}{mc_m-1}d_m\big(|A^-|^2+2f\big)+\frac{mc_m}{mc_m-1}|\mathring{h}|^2d_m\Big).
	\end{align}\end{lemma}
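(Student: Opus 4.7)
The plan is to first convert the bracketed reaction quantity on the left-hand side into the algebraic combination $c_m R_2 - R_1$ modulo explicit ambient curvature corrections, and then to invoke the algebraic identity from Lemma \ref{lemma3.2}. Using the decomposition $A = A^- + h\nu_1$ and the definition of $R_1$, one has $\sum_{i,j,p,q}|\langle A_{ij},A_{pq}\rangle|^2$ equal to the first piece of $R_1$. The Ricci equation $R^\perp_{ij\alpha\beta} = \bar{R}_{ij\alpha\beta} + A^\alpha_{ip}A^\beta_{jp} - A^\beta_{ip}A^\alpha_{jp}$ gives
\[
\sum_{i,j}|R^\perp_{ij}|^2 = \sum\bigl(A^\alpha_{ip}A^\beta_{jp}-A^\beta_{ip}A^\alpha_{jp}\bigr)^2 + \sum|\bar{R}_{ij\alpha\beta}|^2 + 2\sum\bar{R}_{ij\alpha\beta}\bigl(A^\alpha_{ip}A^\beta_{jp}-A^\beta_{ip}A^\alpha_{jp}\bigr),
\]
which, combined with $\sum|\langle A_{ij},H\rangle|^2 = R_2$, yields
\[
c_m\sum|\langle A_{ij},H\rangle|^2 - \sum|\langle A_{ij},A_{pq}\rangle|^2 - \sum|R^\perp_{ij}|^2 = c_m R_2 - R_1 - \sum|\bar{R}_{ij\alpha\beta}|^2 - 2\sum\bar{R}_{ij\alpha\beta}\bigl(A^\alpha_{ip}A^\beta_{jp}-A^\beta_{ip}A^\alpha_{jp}\bigr).
\]

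Next, I apply the equality form of Lemma \ref{lemma3.2}. Since $|A|^2 = c_m|H|^2 + d_m - f$, the identity \eqref{7eq4.11} permits substituting $|H|^2 = \frac{m}{mc_m-1}\bigl(|A^-|^2 + |\mathring{h}|^2 + f - d_m\bigr)$ throughout. After expansion, a crucial cancellation occurs: the coefficient of $|\mathring{h}|^4$ vanishes, leaving
\[
c_m R_2 - R_1 = \bigl(\tfrac{1}{mc_m-1}-\tfrac{3}{2}\bigr)|A^-|^4 + \tfrac{4-3mc_m}{mc_m-1}|\mathring{h}|^2|A^-|^2 + \tfrac{mc_m}{mc_m-1}|\mathring{h}|^2(f-d_m) + \tfrac{2}{mc_m-1}|A^-|^2(f-d_m) + \tfrac{(f-d_m)^2}{mc_m-1}.
\]
Under the dimensional assumptions of the theorem, $mc_m - 1 = \tilde{k}/(m-\tilde{k}) \leq 1/3$, so the first two coefficients are non-negative and the corresponding terms may safely be discarded when forming a lower bound.

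Multiplying by $|A^-|^2/f$ and expanding $(f-d_m)^2 = f^2 - 2fd_m + d_m^2$, the $f$-factors cancel precisely in the contributions $\tfrac{2}{mc_m-1}|A^-|^2\cdot f$ and $\tfrac{mc_m}{mc_m-1}|\mathring{h}|^2 \cdot f$, producing the announced leading terms $\tfrac{2}{mc_m-1}|A^-|^4$ and $\tfrac{mc_m}{mc_m-1}|\mathring{h}|^2|A^-|^2$. The remaining $d_m$-dependent pieces assemble into $-\tfrac{|A^-|^2}{f}\bigl[\tfrac{1}{mc_m-1}d_m(|A^-|^2 + 2f) + \tfrac{mc_m}{mc_m-1}|\mathring{h}|^2 d_m\bigr]$, after using Young's inequality to regroup a mixed term $-\tfrac{2|A^-|^4 d_m}{f(mc_m-1)}$ with the retained non-negative contribution $\tfrac{|A^-|^2 d_m^2}{f(mc_m-1)}$ against the positive reservoir $\tfrac{|A^-|^4}{f}\bigl(\tfrac{1}{mc_m-1}-\tfrac{3}{2}\bigr)$.

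The ambient curvature correction $-\tfrac{|A^-|^2}{f}\bigl[\sum|\bar{R}_{ij\alpha\beta}|^2 + 2\sum\bar{R}_{ij\alpha\beta}(A^\alpha_{ip}A^\beta_{jp}-A^\beta_{ip}A^\alpha_{jp})\bigr]$ descends directly from the first step; the second sum is controlled by Cauchy--Schwarz against $|A^-|^2$, and by Berger's bound \eqref{Berger} all $\bar{R}$ entries are uniformly bounded on $\mathbb{C}P^n$. The main obstacle is the precise bookkeeping of coefficients: one must match every monomial in $|A^-|^2$, $|\mathring{h}|^2$, $f$, and $d_m$ after the substitution of $|H|^2$, and verify that the Young-type absorption is compatible with the constraint $mc_m - 1 \leq 1/3$ so that the sign of each retained term is as asserted.
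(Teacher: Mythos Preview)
Your approach is essentially the paper's: both routes amount to expanding $c_m R_2 - R_1$ in terms of $|\mathring h|^2$, $|A^-|^2$, $|H|^2$, substituting $|H|^2$ via \eqref{7eq4.11}, and discarding manifestly non-negative pieces before multiplying through by $|A^-|^2/f$. The paper simply carries out the decomposition by hand (invoking Lemma~\ref{74.1} for the cross-terms) rather than routing through Lemma~\ref{lemma3.2}; your organisation via Lemma~\ref{lemma3.2} is a legitimate repackaging of the same computation, and the ``reservoir'' absorption you describe is exactly what is needed to land on the stated $\frac{1}{mc_m-1}d_m|A^-|^2$ coefficient.

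Two places need tightening. First, the displayed expression for $c_m R_2 - R_1$ is a \emph{lower bound}, not an equality: Lemma~\ref{lemma3.2} already incorporates the estimate $2|\mathring h_{ij}A^-_{ij}|^2 + |\mathring h_{ip}A^-_{jp}-\mathring h_{jp}A^-_{ip}|^2 \le 4|\mathring h|^2|A^-|^2$ and the Li--Li bound, so you should write $\geq$ throughout. Second, your treatment of the ambient correction is too coarse to reproduce the precise form in \eqref{7eq4.12}. The cross-term $2\sum_{\alpha,\beta}\bar R_{ij\alpha\beta}(A^\alpha_{ip}A^\beta_{jp}-A^\beta_{ip}A^\alpha_{jp})$ must be split according to whether $\alpha,\beta\ge 2$ or one index equals $1$: the first piece is bounded by $4\sum_{\alpha,\beta\ge 2}|\bar R_{ij\alpha\beta}|\,|A^-|^2$, while the $\nu_1$-piece equals $4\langle\bar R_{ij}(\nu_1),\mathring h_{ip}A^-_{jp}-\mathring h_{jp}A^-_{ip}\rangle$ and is bounded by $8|\bar R_{ij}(\nu_1)||\mathring h||A^-|$, not by something in $|A^-|^2$ alone. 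Similarly $\sum_{\alpha,\beta}|\bar R_{ij\alpha\beta}|^2 = \sum_{\alpha,\beta\ge 2}|\bar R_{ij\alpha\beta}|^2 + 2|\bar R_{ij}(\nu_1)|^2$. This is precisely the content of Lemma~\ref{74.1}; once you invoke it your argument closes.
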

\begin{proof} We do a computation that is similar to a computation in \cite{AnBa10}, except we do not throw away the pinching term $f$. By the following equations
	\begin{align*}
	|h|^2=|\mathring{h}|^2+\frac{1}{m}|H|^2,
	\end{align*}	\begin{align*}
	\sum_{i,j}|\langle A_{ij},H\rangle|^2=|H|^2|h|^2,
	\end{align*}
	\begin{align*}
	\sum_{i,j,p,q}|\langle A_{ij},A_{pq}\rangle|^2=|h|^4+2\sum_{i,j}| \mathring{h}_{ij}A^-_{ij}|^2+\sum_{i,j,p,q}|\langle A^-_{ij},A^-_{pq}\rangle|^2
	\end{align*}and
	\begin{align}
	2\sum_{i,j}|R^\bot_{ij}|^2-2\sum_{i,j}|R^\bot_{ij} (\nu_1)|^2=|\hat{R}^\bot|^2+2\sum_{i,j}|R^\bot_{ij}(\nu_1)|^2=|R^\bot|^2,
	\end{align}we have
	\begin{align*}
	c_m\sum_{i,j}\left|\left\langle A_{i j}, H\right\rangle\right|^2&-\sum_{i,j,p,q}\left|\left\langle A_{i j}, A_{pq}\right\rangle\right|^2-\sum_{i,j}|R_{i j}^{\perp}|^2=\frac{1}{m} c_m|H|^4+c_m|\mathring{h}|^2|H|^2-|\mathring{h}|^4\\
	&-\frac{2}{m}|\mathring{h}|^2|H|^2-\frac{1}{m^2}|H|^4-2\sum_{i,j}|\mathring{h}_{i j} A^-_{i j}|^2-\sum_{i,j,p,q}|\langle A^-_{i j}, A^-_{pq}\rangle|^2-|\hat{R}^{\perp}|^2\\
	&-2\sum_{i,j}|R_{i j}^{\perp}(\nu_1)|^2\\
	&=\frac{1}{m}\left(c_m-\frac{1}{m}\right)|H|^4+\left(c_m-\frac{1}{m}\right)|\mathring{h}|^2|H|^2-\frac{1}{m}|\mathring{h}|^2|H|^2-|\mathring{h}|^4\\
	&-2\sum_{i,j}|\mathring{h}_{i j} A^-_{i j}|^2-2\sum_{i,j}|R_{i j}^{\perp}(\nu_1)|^2-\sum_{i,j,p,q}|\langle A^-_{i j}, A^-_{pq}\rangle|^2-|\hat{R}^{\perp}|^2.
	\end{align*}Use \eqref{7eq4.11} and cancel terms to get
	\begin{align*}
	c_m&\sum_{i,j}\left|\left\langle A_{i j}, H\right\rangle\right|^2-\sum_{i,j,p,q}\left|\left\langle A_{i j}, A_{pq}\right\rangle\right|^2-\sum_{i,j}|R_{i j}^{\perp}|^2\\
	&=\frac{1}{m}\big(|A^-|^2+|\mathring{h}|^2+f-d_m \big)|H|^2+|\mathring{h}|^2\big( |A^-|^2+|\mathring{h}|^2+f-d_m\big)\\
	&-\frac{1}{m}|\mathring{h}|^2|H|^2-|\mathring{h}|^4-2\sum_{i,j}|\mathring{h}_{i j} A^-_{i j}|^2-2\sum_{i,j}|R_{i j}^{\perp}(\nu_1)|^2-\sum_{i,j,p,q}|\langle A^-_{i j}, A^-_{pq}\rangle|^2-|\hat{R}^{\perp}|^2\\
	&=\frac{1}{m}\left(f+|A^-|^2-d_m\right)|H|^2+\left(f+|A^-|^2-d_m\right) |\mathring{h}|^2\\
	&-2\sum_{i,j}|\mathring{h}_{i j} A^-_{i j}|^2-2\sum_{i,j}|R_{i j}^{\perp}(\nu_1)|^2-\sum_{i,j,p,q}|\langle A^-_{i j}, A^-_{pq}\rangle|^2-|\hat{R}^{\perp}|^2.
	\end{align*}Using \eqref{7eq4.11} once more for the remaining factor of $|H|^2$ gives
	\begin{align*}
	&c_m\sum_{i,j}\left|\left\langle A_{i j}, H\right\rangle\right|^2-\sum_{i,j,p,q}\left|\left\langle A_{i j}, A_{pq}\right\rangle\right|^2-\sum_{i,j}|R_{i j}^{\perp}|^2 \\
	&= \frac{1}{m}\left(f+|A^-|^2-d_m\right)\left(c_m-\frac{1}{m}\right)^{-1}(f+|A^-|^2+|\mathring{h}|^2-d_m)+\left(f+|A^-|^2-d_m\right)|\mathring{h}|^2 \\
	&-2\sum_{i,j}|\mathring{h}_{i j} A^-_{i j}|^2-2\sum_{i,j}|R_{i j}^{\perp}(\nu_1)|^2-\sum_{i,j,p,q}|\langle A^-_{i j}, A^-_{pq}\rangle|^2-|\hat{R}^{\perp}|^2 \\
	&= \frac{1}{mc_m-1} f(f+2|A^-|^2+|\mathring{h}|^2-2d_m)+f|\mathring{h}|^2+\frac{1}{mc_m-1}| A^-|^4+\frac{mc_m}{mc_m-1}|A^-|^2|\mathring{h}|^2 \\
	&-\frac{mc_m}{mc_m-1}d_m |\mathring{h}|^2-\frac{1}{mc_m-1}d_m|A^-|^2-2\sum_{i,j}|\mathring{h}_{i j} A^-_{i j}|^2-2\sum_{i,j}|R_{i j}^{\perp}(\nu_1)|^2\\
	&-\sum_{i,j,p,q}|\langle A^-_{i j}, A^-_{pq}\rangle|^2-|\hat{R}^{\perp}|^2.
	\end{align*}Now by the two estimates in Lemma \ref{74.1},
	\begin{align*}
	2\sum_{i,j}|\mathring{h}_{ij}A^-_{ij}|^2&+2\sum_{i,j}|R^\bot_{ij}(\nu_1)|^2+\sum_{i,j,p,q}|\langle A^-_{ij},A^-_{pq}\rangle|^2+|\hat{R}^\bot|^2\le4|\mathring{h}|^2|A^-|^2+2\sum_{i,j}|\bar{R}_{ij}(\nu_1)|^2\\
	&+8|\bar{R}_{ij}(\nu_1)||\mathring{h}||A^-|+\frac{3}{2}|A^-|^4+\sum_{\alpha, \beta\ge 2}\Big(\sum_{i,j}|\bar{R}_{ij\alpha\beta}|^2+4|\bar{R}_{ij\alpha\beta}||A^-|^2\Big).
	\end{align*}Therefore,
	\begin{align*}
	\frac{1}{mc_m-1}&|A^-|^4+\frac{mc_m}{mc_m-1}|A^-|^2|\mathring{h}|^2-2\sum_{i,j}|\mathring{h}_{i j} A^-_{i j}|^2-2\sum_{i,j}|R_{i j}^{\perp}(\nu_1)|^2\\
	&-\sum_{i,j,p,q}|\langle A^-_{i j}, A^-_{pq}\rangle|^2-|\hat{R}^{\perp}|^2\\
	&\geq\left(\frac{1}{mc_m-1}-\frac{3}{2}\right)|A^-|^4+\left(\frac{mc_m}{mc_m-1}-4\right)|\mathring{h}|^2|A^-|^2-2\sum_{i,j}|\bar{R}_{ij}(\nu_1)|^2\\
	&-8|\bar{R}_{ij}(\nu_1)||\mathring{h}||A^-|-\sum_{\alpha, \beta\ge 2}\Big(\sum_{i,j}|\bar{R}_{ij\alpha\beta}|^2+4|\bar{R}_{ij\alpha\beta}||A^-|^2\Big).
	\end{align*}Since $c_m \leq \frac{1}{m-\tilde{k}}$ and $m\ge 4\tilde{k}$, we have
	\begin{align*}
	\frac{1}{m c_m-1}-\frac{3}{2} \geq \frac{3}{2}, \quad \frac{mc_m}{mc_m-1}-4 \geq 0.
	\end{align*}Consequently, we have
	\begin{align}\label{7eq4.13}
	c_m\sum_{i,j}&\left|\left\langle A_{i j}, H\right\rangle\right|^2-\sum_{i,j,p,q}\left|\left\langle A_{i j}, A_{pq}\right\rangle\right|^2-\sum_{i,j}|R_{i j}^{\perp}|^2 \geq \frac{2}{mc_m-1} f|A^-|^2+\frac{mc_m}{mc_m-1} f|\mathring{h}|^2\nonumber\\
	&+\frac{1}{mc_m-1} f^2-\frac{1}{mc_m-1}d_m\big(|A^-|^2+2f\big)-\frac{mc_m}{mc_m-1}|\mathring{h}|^2d_m-2\sum_{i,j}|\bar{R}_{ij}(\nu_1)|^2\nonumber\\
	&-8|\bar{R}_{ij}(\nu_1)||\mathring{h}||A^-|-\sum_{\alpha, \beta\ge 2}\Big(\sum_{i,j}|\bar{R}_{ij\alpha\beta}|^2+4|\bar{R}_{ij\alpha\beta}||A^-|^2\Big)\nonumber\\
	&\geq \frac{2}{mc_m-1} f|A^-|^2+\frac{mc_m}{mc_m-1} f|\mathring{h}|^2-8|\bar{R}_{ij}(\nu_1)||\mathring{h}||A^-|-\frac{1}{mc_m-1}d_m\big(|A^-|^2+2f\big)\nonumber\\
	&-\frac{mc_m}{mc_m-1}|\mathring{h}|^2d_m-\sum_{\alpha, \beta\ge 2}\Big(\sum_{i,j}|\bar{R}_{ij\alpha\beta}|^2+4|\bar{R}_{ij\alpha\beta}||A^-|^2\Big)-2\sum_{i,j}|\bar{R}_{ij}(\nu_1)|^2.
	\end{align}Multiplying both sides by $\frac{|A^-|^2}{f}$ completes the proof of the lemma.
\end{proof}
Putting Lemmas \ref{74.2} and \ref{7lemma4.3} together, we have the following lemma.
\begin{lemma}[Reaction term estimate]\label{7lemma4.4} \ \newline
If $0<\delta \leq \frac{1}{2}$ and $\frac{1}{m}<c_m \leq \frac{1}{m-\tilde{k}}$, then
	\begin{align}\label{7eq4.14}
	&\sum_{i,j,p,q}|\langle A^-_{i j}, A^-_{pq}\rangle|^2+|\hat{R}^{\perp}|^2+\sum_{i,j}|R_{i j}^{\perp}(\nu_1)|^2\leq(1-\delta) \frac{|A^-|^2}{f}\Big(c_m\sum_{i,j}\left|\left\langle A_{i j}, H\right\rangle\right|^2\nonumber\\
	&-\sum_{i,j,p,q}\left|\left\langle A_{i j}, A_{pq}\right\rangle\right|^2-\sum_{i,j}|R_{i j}^{\perp}|^2\Big)+\Big(1+(1-\delta)\frac{|A^-|^2}{f}\Big)\sum_{\alpha,\beta\ge 2}\Big(\sum_{i,j}|\bar{R}_{ij\alpha\beta}|^2+4|\bar{R}_{ij\alpha\beta}||A^-|^2\Big)\nonumber\\
	&+\Big(1+(1-\delta)\frac{2|A^-|^2}{f}\Big)\Big(\sum_{i,j}|\bar{R}_{ij}(\nu_1)|^2+4|\bar{R}_{ij}(\nu_1)||\mathring{h}||A^-|\Big)\nonumber\\
	&+(1-\delta)\frac{|A^-|^2}{f}\Big(\frac{1}{mc_m-1}d_m\big(|A^-|^2+2f\big)+\frac{mc_m}{mc_m-1}|\mathring{h}|^2d_m\Big).
	\end{align}\end{lemma}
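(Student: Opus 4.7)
The plan is to combine the upper bound for the reaction terms of $(\partial_t-\Delta)|A^-|^2$ from Lemma \ref{74.2} with the lower bound for $(|A^-|^2/f)$ times the reaction of $f$ from Lemma \ref{7lemma4.3}, after multiplying the latter by $(1-\delta)$. Specifically, I first bound the left-hand side of \eqref{7eq4.14} using Lemma \ref{74.2}, obtaining a polynomial expression of the form $\tfrac{3}{2}|A^-|^4 + 2|\mathring{h}|^2|A^-|^2$ plus ambient curvature error terms. I then replace the polynomial part by $(1-\delta)\tfrac{2}{mc_m-1}|A^-|^4 + (1-\delta)\tfrac{mc_m}{mc_m-1}|\mathring{h}|^2|A^-|^2$ (provided the coefficient comparison works), and finally invoke Lemma \ref{7lemma4.3} in the direction that rewrites this latter expression as $(1-\delta)(|A^-|^2/f)$ times the reaction term of $f$ plus the $(1-\delta)$-multiplied correction, which is moved to the right-hand side.

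The quantitative heart of the argument is the coefficient check, namely $(1-\delta)\tfrac{2}{mc_m-1}\ge\tfrac{3}{2}$ and $(1-\delta)\tfrac{mc_m}{mc_m-1}\ge 2$. Under the standing hypotheses $c_m\le 1/(m-\tilde{k})$ and $m\ge 4\tilde{k}$, the same arithmetic used at the end of the proof of Lemma \ref{7lemma4.3} gives $\tfrac{2}{mc_m-1}\ge 6$ and $\tfrac{mc_m}{mc_m-1}\ge 4$; combined with $\delta\le 1/2$, i.e.\ $1-\delta\ge 1/2$, both required inequalities hold with room to spare ($3\ge 3/2$ and $2\ge 2$). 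Consequently the leading polynomial terms from Lemma \ref{74.2} are cleanly absorbed.

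What remains is to sum the two sources of ambient curvature error and check that they reproduce the coefficients in \eqref{7eq4.14}. The Lemma \ref{74.2} bound contributes the packages $\sum_{\alpha,\beta\ge 2}(\sum|\bar R_{ij\alpha\beta}|^2 + 4|\bar R_{ij\alpha\beta}||A^-|^2)$ and $\sum|\bar R_{ij}(\nu_1)|^2 + 4|\bar R_{ij}(\nu_1)||\mathring{h}||A^-|$ with coefficient $1$, while the $(1-\delta)$-multiplied correction from \eqref{7eq4.12} contributes the $\bar R_{ij\alpha\beta}$ package with coefficient $(1-\delta)|A^-|^2/f$ and the $\bar R_{ij}(\nu_1)$ package with coefficient $2(1-\delta)|A^-|^2/f$, the factor of $2$ tracking the explicit $2\sum|\bar R_{ij}(\nu_1)|^2$ and $8|\bar R_{ij}(\nu_1)||\mathring{h}||A^-|$ inside \eqref{7eq4.12}. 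Summing the two sources yields exactly the prefactors $1+(1-\delta)|A^-|^2/f$ and $1+2(1-\delta)|A^-|^2/f$ claimed in \eqref{7eq4.14}, and the residual $d_m$-dependent terms match the last line of \eqref{7eq4.14} verbatim.

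I expect the main obstacle to be purely bookkeeping: keeping the numerical factors $2,4,8$ and the $(1-\delta)|A^-|^2/f$ prefactors aligned across all curvature packages, in particular making sure the factor of $2$ on the $\bar R_{ij}(\nu_1)$ contributions from \eqref{7eq4.12} is faithfully transmitted to produce $1+2(1-\delta)|A^-|^2/f$, and verifying that no sign error is introduced when the Lemma \ref{7lemma4.3} inequality is reversed and its correction moved to the right-hand side.
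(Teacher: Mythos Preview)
Your proposal is correct and follows essentially the same approach as the paper: subtract $(1-\delta)$ times the lower bound \eqref{7eq4.12} from the upper bound \eqref{7eq4.10}, verify that the resulting coefficients on $|A^-|^4$ and $|\mathring{h}|^2|A^-|^2$ are nonpositive using $\tfrac{1}{mc_m-1}\ge 3$, $\tfrac{mc_m}{mc_m-1}\ge 4$ (from $c_m\le\tfrac{1}{m-\tilde{k}}$, $m\ge 4\tilde{k}$) together with $1-\delta\ge\tfrac{1}{2}$, and then collect the ambient curvature and $d_m$ error terms. Your bookkeeping of the coefficients $1+(1-\delta)|A^-|^2/f$ and $1+2(1-\delta)|A^-|^2/f$ is exactly as in the paper.
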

\begin{proof}
In view of \eqref{7eq4.10} and \eqref{7eq4.12}, we have
	\begin{align*}
	&\sum_{i,j,p,q}|\langle A^-_{i j}, A^-_{pq}\rangle|^2+|\hat{R}^{\perp}|^2+\sum_{i,j}|R_{i j}^{\perp}(\nu_1)|^2-(1-\delta) \frac{|A^-|^2}{f}\Big(c_m\sum_{i,j}\left|\left\langle A_{i j}, H\right\rangle\right|^2\\
	&-\sum_{i,j,p,q}\left|\left\langle A_{i j}, A_{pq}\right\rangle\right|^2-\sum_{i,j}|R_{i j}^{\perp}|^2\Big) \\
	&\leq \frac{3}{2}|A^-|^4+2|\mathring{h}|^2|A^-|^2-\frac{2(1-\delta)}{m c_m-1}|A^-|^4-\frac{m c_m(1-\delta)}{m c_m-1}|\mathring{h}|^2|A^-|^2 \\
	&+\sum_{\alpha, \beta\ge 2}\Big(\sum_{i,j}|\bar{R}_{ij\alpha\beta}|^2+4|\bar{R}_{ij\alpha\beta}||A^-|^2\Big)+\sum_{i,j}|\bar{R}_{ij}(\nu_1)|^2+4|\bar{R}_{ij}(\nu_1)||\mathring{h}||A^-|\\
	&+(1-\delta)\frac{|A^-|^2}{f}\Big(\sum_{\alpha,\beta\ge 2}\Big(\sum_{i,j}|\bar{R}_{ij\alpha\beta}|^2+4|\bar{R}_{ij\alpha\beta}||A^-|^2\Big)+2\sum_{i,j}|\bar{R}_{ij}(\nu_1)|^2+8|\bar{R}_{ij}(\nu_1)||\mathring{h}||A^-|\Big)\\
	&+(1-\delta)\frac{|A^-|^2}{f}\Big(\frac{1}{mc_m-1}d_m\big(|A^-|^2+2f\big)+\frac{mc_m}{mc_m-1}|\mathring{h}|^2d_m\Big)\\
	&=\left(\frac{3}{2}-\frac{2(1-\delta)}{m c_m-1}\right)|A^-|^4+\left(2-\frac{m c_m(1-\delta)}{m c_m-1}\right)|\mathring{h}|^2|A^-|^2\\
	&+\Big(1+(1-\delta)\frac{|A^-|^2}{f}\Big)\sum_{\alpha,\beta\ge 2}\Big(\sum_{i,j}|\bar{R}_{ij\alpha\beta}|^2+4|\bar{R}_{ij\alpha\beta}||A^-|^2\Big)\\
	&+\Big(1+(1-\delta)\frac{2|A^-|^2}{f}\Big)\Big(\sum_{i,j}|\bar{R}_{ij}(\nu_1)|^2+4|\bar{R}_{ij}(\nu_1)||\mathring{h}||A^-|\Big)\\
	&+(1-\delta)\frac{|A^-|^2}{f}\Big(\frac{1}{mc_m-1}d_m\big(|A^-|^2+2f\big)+\frac{mc_m}{mc_m-1}|\mathring{h}|^2d_m\Big).
	\end{align*}If $c_m \leq \frac{1}{m-\tilde{k}}$ and $m\ge 4\tilde{k}$, then
	\begin{align*}
	\frac{1}{m c_m-1} \geq 3, \quad \text{ and } \quad \frac{m c_m}{m c_m-1} \geq 4
	\end{align*}Therefore, if $\delta\le\frac{1}{2}$
	\begin{align*}
	\frac{3}{2}-\frac{2(1-\delta)}{m c_m-1}\le \frac{3}{2}-6(1-\delta)\le 0,
	\end{align*}	\begin{align*}
	2-\frac{m c_m(1-\delta)}{mc_m-1}\le 2-4(1-\delta)\le0,
	\end{align*}
which gives \eqref{7eq4.14}.
\end{proof}
Following the arguments of Naff \cite{Naff}, we turn our attention to the gradient terms. Recalling that $A^-_{j k}$ is traceless, it is straightforward to verify that
	\begin{align}\label{7eq4.15}
	\sum_{i,j,k}|\nabla_i h_{j k}+\langle\nabla_i^{\perp} A^-_{j k}, \nu_1\rangle|^2 =\sum_{i,j,k}|\nabla_i \mathring{h}_{j k}+\langle\nabla_i^{\perp} A^-_{j k}, \nu_1\rangle|^2+\frac{1}{m}|\nabla| H \|^2
	\end{align}	\begin{align}\label{7eq4.16}
	\sum_{i,j,k}|\hat{\nabla}_i^{\perp} A^-_{j k}+h_{j k} \nabla_i^{\perp} \nu_1|^2 =\sum_{i,j,k}|\hat{\nabla}_i^{\perp} A^-_{j k}+\mathring{h}_{j k} \nabla_i^{\perp} \nu_1|^2+\frac{1}{m}|H|^2|\nabla^{\perp} \nu_1|^2
	\end{align}
Observe that the first term in \eqref{7eq4.15} is just
	\begin{align}\label{7eq4.17}
	\sum_{i,j,k}|\langle\nabla_i^{\perp} \mathring{A}_{j k}, \nu_1\rangle|^2=\sum_{i,j,k}|\nabla_i \mathring{h}_{j k}+\langle\nabla_i^{\perp} A^-_{j k}, \nu_1\rangle|^2,
	\end{align}which will be useful later on. The projection of the Codazzi identity onto $\nu_1$ and its orthogonal complement implies the tensors $\nabla_i h_{j k}+\langle\nabla_i^{\perp} A^-_{j k}, \nu_1\rangle$ and $\hat{\nabla}_i^{\perp} A^-_{j k}+h_{j k} \nabla_i^{\perp} \nu_1$ are symmetric in $i, j, k$. As in Lemma \ref{lemma3.2}, \eqref{2.23naff} and \eqref{2.24naff}, we obtain that
	\begin{align*}
\frac{16}{9(m+2)}|\nabla| H||^2&\leq\sum_{i,j,k}|\nabla_i h_{j k}+\langle\nabla_i^{\perp} A^-_{j k}, \nu_1\rangle|^2,
	\end{align*}	\begin{align}\label{4.20naff9}
\frac{16}{9(m+2)}|H|^2|\nabla^\bot \nu_1|^2\le\sum_{i,j,k}|\hat{\nabla}^\bot_i A^-_{jk}+h_{jk}\nabla^\bot_i \nu_1|^2.
	\end{align}
Now expanding the right-handside of both inequalities above using \eqref{7eq4.15}, \eqref{7eq4.16} and \eqref{7eq4.17} and noting that $\frac{16}{9(m+2)}-\frac{1}{m}=\frac{7m-18}{9m(m+2)}$, we arrive at the estimates
	\begin{align}\label{eq4.21naff}
	\frac{7m-18}{9m(m+2)}|\nabla|H||^2\le\sum_{i,j,k}|\langle\nabla^\bot_i \mathring{A}_{jk},\nu_1\rangle|^2,
	\end{align}	\begin{align}\label{eq4.22naff}
	\frac{7m-18}{9m(m+2)}|H|^2|\nabla^\bot\nu_1|^2\le\sum_{i,j,k}|\hat{\nabla}^\bot_i A^-_{jk}+\mathring{h}_{jk}\nabla^\bot_i\nu_1|^2.
	\end{align}

From Theorem \ref{ThCPn} and \eqref{Berger}, we have that
	\begin{align*}
	&\frac{|A^-|^2}{f^2}\Big(4\sum_{i,j,p,q}\bar{R}_{ipjq}\big(\sum_{\alpha} A^\alpha_{pq}A^\alpha_{ij}\big)-4\sum_{j,k,p}\bar{R}_{kjkp}\big(\sum_{i,\alpha} A^\alpha_{pi}A^\alpha_{ij}\big)+2\sum_{k,\alpha,\beta}\bar{R}_{k\alpha k\beta}\big(\sum_{i,j} A^\alpha_{ij}A_{ij}^\beta \big)\Big)\\
	&+\frac{|A^-|^2}{f^2}\Big(-2c_m\sum_{k,\alpha,\beta}\bar{R}_{k\alpha k\beta}H^\alpha H^\beta-8\sum_{j,p,\alpha,\beta}\bar{R}_{jp\alpha\beta}\big(\sum_i A^\alpha_{ip}A_{ij}^\beta \big)\Big)\\
	&\le C\frac{|A^-|^2}{f},
	\end{align*}where we used the fact that the quantities in the parenthesis divided by $f$ are bounded.
Also, from \eqref{Berger}, we have
	\begin{align*}
	&\frac{1}{f}\Big(4\sum_{i,j,p,q}\bar{R}_{ipjq}\big(\sum_{\alpha\ge 2} A^\alpha_{pq}A^\alpha_{ij}\big)-4\sum_{j,k,p}\bar{R}_{kjkp}\big(\sum_{i,\alpha\ge 2} A^\alpha_{pi}A^\alpha_{ij}\big)+2\sum_{k,\alpha,\beta\ge 2}\bar{R}_{k\alpha k\beta}\big(\sum_{i,j} A^\alpha_{ij}A_{ij}^\beta \big)\Big)\\
	&+\frac{1}{f}\Big(2|H|^{-2}\sum_{i,j,k,\alpha,\beta\ge 2}\bar{R}_{k\alpha k\beta} H^\alpha A^{\beta}_{ij}\langle A_{ij},H\rangle-8\sum_{j,p,\alpha,\beta\ge 2}\bar{R}_{jp\alpha \beta}\big(\sum_iA^\alpha_{ip}A_{ij}^\beta \big)\Big)\\
	&\le C\frac{1}{f}\Big(|A^-|^2 +|A^-||A|+|A^-||h|\Big)\\
	&\le C\frac{|A^-|^2}{f},
	\end{align*}where we used the fact that the quantities in the parenthesis divided by $f$ are bounded. By previous calculations we have upper bounds for most of the terms. We will show that the rest of the gradient terms satisfy the following:
	\begin{align*}
	&4 \sum_{i,j,k}Q_{i j k}\left\langle A^-_{i j}, \nabla_k^{\perp} \nu_1\right\rangle\le2|\nabla^{\perp} A^-|^2+2(1-\delta) \frac{|A^-|^2}{f}\left(|\nabla^{\perp} A|^2-c_m|\nabla^{\perp} H|^2\right)\\
	&+\frac{7m-18}{9(m+2)\left(m c_m-1\right)}d_m|\nabla^\bot\nu_1|^2-(1-\delta)\frac{2m}{m c_m-1}\left(c_m-\frac{16}{9(m+2)}\right)\frac{|A^-|^2}{f}d_m|\nabla^{\perp} \nu_1|^2.
	\end{align*}
From the definition of $A^-$, it is natural to define the connection $\hat{\nabla}^\bot$ acting on $A^-$, by
	\begin{align*}
	\hat{\nabla}^\bot_i A^-_{jk}:=\nabla^\bot_i A^-_{jk}-\langle \nabla^\bot_i A^-_{jk},\nu_1\rangle\nu_1.
	\end{align*}\begin{lemma}[{Lower bound for Bochner term of $\left(\partial_t-\Delta\right)|A^-|^2$}]\label{74.6} \ \newline
If $\frac{1}{m}<c_m \leq \frac{1}{m-\tilde{k}}$, then
	\begin{align*}
	2|\hat{\nabla}^{\perp} A^-|^2&\geq\left(\frac{7m-18}{9(m+2)\left(m c_m-1\right)}-2\right)|\mathring{h}|^2|\nabla^{\perp} \nu_1|^2\\
	&+\frac{7m-18}{9(m+2)\left(m c_m-1\right)}(|A^-|^2+f-d_m)|\nabla^{\perp} \nu_1|^2.
	\end{align*}\end{lemma}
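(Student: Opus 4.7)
The plan is to derive this estimate as an algebraic consequence of two ingredients already established in the excerpt: the refined Kato-type inequality \eqref{eq4.22naff} and the pinching identity \eqref{7eq4.11}. No new curvature computations should be required; the whole proof will be pointwise and purely tensorial.

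First I would start from \eqref{eq4.22naff}, namely
\begin{equation*}
\frac{7m-18}{9m(m+2)}|H|^2|\nabla^\bot\nu_1|^2 \le \sum_{i,j,k}\bigl|\hat{\nabla}^\bot_i A^-_{jk}+\mathring{h}_{jk}\nabla^\bot_i\nu_1\bigr|^2,
\end{equation*}
and expand the squared norm on the right. The only term to control is the cross term $2\sum_{i,j,k}\mathring{h}_{jk}\langle\hat{\nabla}^\bot_i A^-_{jk},\nabla^\bot_i\nu_1\rangle$, which by the pointwise Cauchy--Schwarz/Young inequality is bounded above by $|\hat{\nabla}^\bot A^-|^2+|\mathring{h}|^2|\nabla^\bot\nu_1|^2$. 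This yields the clean bound
\begin{equation*}
\sum_{i,j,k}\bigl|\hat{\nabla}^\bot_i A^-_{jk}+\mathring{h}_{jk}\nabla^\bot_i\nu_1\bigr|^2 \le 2|\hat{\nabla}^\bot A^-|^2 + 2|\mathring{h}|^2|\nabla^\bot\nu_1|^2,
\end{equation*}
which, combined with \eqref{eq4.22naff}, rearranges to
\begin{equation*}
2|\hat{\nabla}^\bot A^-|^2 \ge \frac{7m-18}{9m(m+2)}|H|^2|\nabla^\bot\nu_1|^2 - 2|\mathring{h}|^2|\nabla^\bot\nu_1|^2.
\end{equation*}

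The second and final step is to eliminate $|H|^2$ via the identity \eqref{7eq4.11}. Under the hypothesis $c_m>1/m$, that identity gives $|H|^2=\frac{m}{mc_m-1}(|A^-|^2+|\mathring{h}|^2+f-d_m)$. Substituting and simplifying the prefactor $\frac{7m-18}{9m(m+2)}\cdot\frac{m}{mc_m-1}=\frac{7m-18}{9(m+2)(mc_m-1)}$ produces a term proportional to $|\mathring{h}|^2|\nabla^\bot\nu_1|^2$, which merges with the $-2|\mathring{h}|^2|\nabla^\bot\nu_1|^2$ term from the previous step and yields exactly the claimed inequality after regrouping.

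There is no real obstacle: the argument is two inequalities stitched together by an identity, and the assumption $c_m\in(1/m,1/(m-\tilde k)]$ is used only to guarantee $mc_m-1>0$ so that \eqref{7eq4.11} can be solved for $|H|^2$. Note that the sign of the coefficient $\frac{7m-18}{9(m+2)(mc_m-1)}-2$ is not needed here since the lemma only states a lower bound; its role (potentially negative, requiring absorption by other terms) will presumably appear later in the codimension-estimate argument, not in this lemma.
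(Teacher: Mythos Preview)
Your proposal is correct and follows essentially the same route as the paper: expand the square and apply Young's inequality to the cross term to obtain $\sum|\hat{\nabla}^\bot_i A^-_{jk}+\mathring{h}_{jk}\nabla^\bot_i\nu_1|^2\le 2|\hat{\nabla}^\bot A^-|^2+2|\mathring{h}|^2|\nabla^\bot\nu_1|^2$, combine this with \eqref{eq4.22naff}, and then substitute for $|H|^2$ using \eqref{7eq4.11}. The only cosmetic difference is the order of the last two steps (you rearrange first and then substitute, the paper substitutes first and then rearranges), but the argument is identical.
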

\begin{proof}
We begin by applying Young's inequality
	\begin{align*}
	\sum_{i,j,k}|\hat{\nabla}_i^{\perp} A^-_{j k}+\mathring{h}_{j k} \nabla_i^{\perp} \nu_1|^2&=|\hat{\nabla}^{\perp} A^-|^2+2\sum_{i,j,k}\langle\hat{\nabla}_i^{\perp} A^-_{j k}, \mathring{h}_{j k} \nabla_i^{\perp} \nu_1\rangle+|\mathring{h}|^2|\nabla^{\perp} \nu_1|^2 \\
	&\leq 2|\hat{\nabla}^{\perp} A^-|^2+2|\mathring{h}|^2|\nabla^{\perp} \nu_1|^2.
	\end{align*}Multiplying both sides of \eqref{7eq4.11} by $\frac{7m-18}{9(m+2)(m c_m-1)}$ gives
	\begin{align*}
	\frac{7m-18}{9m(m+2)}|H|^2=\frac{7m-18}{9(m+2)\left(m c_m-1\right)}\left(f+|A^-|^2+|\mathring{h}|^2-d_m\right).
	\end{align*}In view of \eqref{eq4.22naff}, our observations give us that
	\begin{align*}
	\frac{7m-18}{9(m+2)\left(m c_m-1\right)}\left(f+|A^-|^2+|\mathring{h}|^2-d_m\right)|\nabla^{\perp} \nu_1|^2&\leq 2|\hat{\nabla}^{\perp} A^-|^2+2|\mathring{h}|^2|\nabla^{\perp} \nu_1|^2.
	\end{align*}Subtracting the $|\mathring{h}|^2|\nabla^{\perp} \nu_1|^2$ term on the right-hand side gives
	\begin{align}\label{7eqn_nablaA-}
	\frac{7m-18}{9(m+2)\left(mc_m-1\right)}(f+|A^-|^2&-d_m)|\nabla^{\perp} \nu_1|^2+\left(\frac{7m-18}{9(m+2)\left(m c_m-1\right)}-2\right)|\mathring{h}|^2|\nabla^{\perp} \nu_1|^2 \nonumber\\
	&\leq 2|\hat{\nabla}^{\perp}A^-|^2,
	\end{align}which is the estimate of the lemma.
\end{proof}
\begin{lemma} [{Lower bound for Bochner term of $\left(\partial_t-\Delta\right) f$}]\label{74.7} \ \newline
If $\frac{1}{m}<c_m \leq \frac{1}{m-\tilde{k}}$, then
	\begin{align*}
	2 \frac{|A^-|^2}{f}(|\nabla^{\perp} A|^2-c_m|\nabla^{\perp} H|^2)&\geq \left(2-\frac{18(m+2)\left(m c_m-1\right)}{7m-18}\right) \frac{|A^-|^2}{f}\sum_{i,j,k}|\langle\nabla_i^{\perp} \mathring{A}_{j k}, \nu_1\rangle|^2 \\
	&+\frac{2m}{m c_m-1}\left(\frac{16}{9(m+2)}-c_m\right)|A^-|^2|\nabla^{\perp} \nu_1|^2\\
	&+\frac{2m}{m c_m-1}\left(c_m-\frac{16}{9(m+2)}\right)\frac{|A^-|^2}{f}d_m|\nabla^{\perp} \nu_1|^2.
	\end{align*}\end{lemma}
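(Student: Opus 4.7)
The plan is to exploit the sharp decomposition of $|\nabla^{\perp}A|^2$ and $|\nabla^{\perp}H|^2$ given in Proposition \ref{prop2.2naff}, together with the refined Kato-type inequalities \eqref{eq4.21naff} and \eqref{eq4.22naff}, and finally invoke the algebraic identity \eqref{7eq4.11} to re-express $|H|^2$ in terms of $|A^-|^2$, $|\mathring{h}|^2$ and $f$. All of this is then multiplied by $2|A^-|^2/f$.

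First, by combining \eqref{2.22naff} with \eqref{7eq4.15}, \eqref{7eq4.16} and \eqref{7eq4.17}, I would write
\begin{align*}
|\nabla^{\perp}A|^2 = \sum_{i,j,k}\bigl|\hat{\nabla}_i^{\perp}A^-_{jk}+\mathring{h}_{jk}\nabla_i^{\perp}\nu_1\bigr|^2 + \sum_{i,j,k}|\langle\nabla_i^{\perp}\mathring{A}_{jk},\nu_1\rangle|^2 + \tfrac{1}{m}|H|^2|\nabla^{\perp}\nu_1|^2 + \tfrac{1}{m}|\nabla|H||^2.
\end{align*}
Subtracting $c_m|\nabla^{\perp}H|^2=c_m|H|^2|\nabla^{\perp}\nu_1|^2+c_m|\nabla|H||^2$ via \eqref{2.23naff} produces
\begin{align*}
|\nabla^{\perp}A|^2 - c_m|\nabla^{\perp}H|^2 &= \sum_{i,j,k}\bigl|\hat{\nabla}_i^{\perp}A^-_{jk}+\mathring{h}_{jk}\nabla_i^{\perp}\nu_1\bigr|^2 - \bigl(c_m-\tfrac{1}{m}\bigr)|H|^2|\nabla^{\perp}\nu_1|^2 \\
&\quad + \sum_{i,j,k}|\langle\nabla_i^{\perp}\mathring{A}_{jk},\nu_1\rangle|^2 - \bigl(c_m-\tfrac{1}{m}\bigr)|\nabla|H||^2.
\end{align*}

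Next I would control the last pair of terms by \eqref{eq4.21naff}, which rewrites as $|\nabla|H||^2\le\tfrac{9m(m+2)}{7m-18}\sum_{i,j,k}|\langle\nabla_i^{\perp}\mathring{A}_{jk},\nu_1\rangle|^2$, giving
\begin{align*}
\sum_{i,j,k}|\langle\nabla_i^{\perp}\mathring{A}_{jk},\nu_1\rangle|^2 - \bigl(c_m-\tfrac{1}{m}\bigr)|\nabla|H||^2 \ge \Bigl(1 - \tfrac{9(m+2)(mc_m-1)}{7m-18}\Bigr)\sum_{i,j,k}|\langle\nabla_i^{\perp}\mathring{A}_{jk},\nu_1\rangle|^2.
\end{align*}
Similarly, \eqref{eq4.22naff} yields
\begin{align*}
\sum_{i,j,k}\bigl|\hat{\nabla}_i^{\perp}A^-_{jk}+\mathring{h}_{jk}\nabla_i^{\perp}\nu_1\bigr|^2 - \bigl(c_m-\tfrac{1}{m}\bigr)|H|^2|\nabla^{\perp}\nu_1|^2 \ge \Bigl(\tfrac{16}{9(m+2)}-c_m\Bigr)|H|^2|\nabla^{\perp}\nu_1|^2,
\end{align*}
after the arithmetic simplification $\tfrac{7m-18}{9m(m+2)} + \tfrac{1}{m} - c_m = \tfrac{16}{9(m+2)}-c_m$. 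Under the standing hypothesis $c_m\le \tfrac{1}{m-\tilde{k}}$ together with $m\ge\tfrac{18+16\tilde k}{7}$, the factor $\tfrac{16}{9(m+2)}-c_m$ is non-negative.

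Now substitute \eqref{7eq4.11}, i.e.\ $|H|^2=\tfrac{m}{mc_m-1}(|A^-|^2+|\mathring{h}|^2+f-d_m)$, into the $|H|^2|\nabla^{\perp}\nu_1|^2$ term, multiply the entire inequality by the non-negative factor $2|A^-|^2/f$, and drop the manifestly non-negative contributions $\tfrac{|A^-|^4}{f}+\tfrac{|A^-|^2|\mathring{h}|^2}{f}$ coming from the expansion of $(|A^-|^2+|\mathring{h}|^2+f-d_m)/f$. What survives is exactly
\begin{align*}
\tfrac{2m}{mc_m-1}\Bigl(\tfrac{16}{9(m+2)}-c_m\Bigr)|A^-|^2|\nabla^{\perp}\nu_1|^2 + \tfrac{2m}{mc_m-1}\Bigl(c_m-\tfrac{16}{9(m+2)}\Bigr)\tfrac{|A^-|^2}{f}d_m|\nabla^{\perp}\nu_1|^2,
\end{align*}
together with the term $\bigl(2-\tfrac{18(m+2)(mc_m-1)}{7m-18}\bigr)\tfrac{|A^-|^2}{f}\sum_{i,j,k}|\langle\nabla_i^{\perp}\mathring{A}_{jk},\nu_1\rangle|^2$. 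This is the asserted inequality.

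The bookkeeping itself is the only real difficulty: one must track the precise coefficient arising from combining the two Kato-type refinements with the algebraic identity, and verify that under the pinching assumption the discarded terms $\tfrac{|A^-|^4}{f}$ and $\tfrac{|A^-|^2|\mathring{h}|^2}{f}$ indeed enter with a non-negative prefactor (which requires $\tfrac{16}{9(m+2)}-c_m\ge 0$, hence the dimension bound $m\ge\tfrac{18+16\tilde k}{7}$). No further nonlinear estimate is needed beyond what has already been developed.
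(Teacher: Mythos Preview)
Your proposal is correct and follows essentially the same route as the paper: decompose $|\nabla^\perp A|^2-c_m|\nabla^\perp H|^2$ via Proposition~\ref{prop2.2naff} and \eqref{7eq4.15}--\eqref{7eq4.17}, apply the Kato-type inequalities \eqref{eq4.21naff} and \eqref{4.20naff9}/\eqref{eq4.22naff} to the two resulting pieces, substitute \eqref{7eq4.11} for $|H|^2$, drop the non-negative $|A^-|^2+|\mathring{h}|^2$ contribution (using $\tfrac{16}{9(m+2)}-c_m\ge 0$), and multiply by $2|A^-|^2/f$. The only cosmetic difference is that you work with the $\mathring{h}$-version \eqref{eq4.22naff} and absorb the $\tfrac{1}{m}|H|^2|\nabla^\perp\nu_1|^2$ term arithmetically, whereas the paper uses the $h$-version \eqref{4.20naff9} directly; the two are equivalent by \eqref{7eq4.16}.
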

\begin{proof}
Using \eqref{2.22naff} and \eqref{2.23naff}, we have
	\begin{align*}
	|\nabla^{\perp} A|^2-c_m|\nabla^{\perp} H|^2&=\sum_{i,j,k}|\langle\nabla_i^{\perp} A^-_{j k}, \nu_1\rangle+\nabla_i h_{j k}|^2-c_m|\nabla| H \|^2 \\
	&+\sum_{i,j,k}|\hat{\nabla}_i^{\perp} A^-_{j k}+h_{j k} \nabla_i^{\perp} \nu_1|^2-c_m|H|^2|\nabla^{\perp} \nu_1|^2.
	\end{align*}Note that by \eqref{7eq4.15}, \eqref{7eq4.17} and \eqref{eq4.21naff} we have
	\begin{align*}
	\sum_{i,j,k}|\langle\nabla_i^{\perp} A^-_{j k}, \nu_1\rangle+\nabla_i h_{j k}|^2-c_m|\nabla| H||^2&=\sum_{i,j,k}|\langle\nabla_i^{\perp} \mathring{A}_{j k}, \nu_1\rangle|^2-\frac{m c_m-1}{m}|\nabla| H||^2\\
	&\geq\left(1-\frac{9(m+2)\left(mc_m-1\right)}{7m-18}\right)\sum_{i,j,k}|\langle\nabla_i^{\perp} \mathring{A}_{j k}, \nu_1\rangle|^2.
	\end{align*}In view of \eqref{7eq4.11} and\eqref{4.20naff9}, we have
	\begin{align*}
	\sum_{i,j,k}|\hat{\nabla}_i^\bot A^-_{jk}+h_{jk} \nabla_i^\bot\nu_1|^2&-c_m|H|^2|\nabla^\bot \nu_1|^2\geq\left(\frac{16}{9(m+2)}-c_m\right)|H|^2|\nabla^\bot\nu_1|^2 \\
	&=\frac{m}{m c_m-1}\left(\frac{16}{9(m+2)}-c_m\right)(f+|A^-|^2+|\mathring{h}|^2-d_m)|\nabla^\bot\nu_1|^2 \\
	&\geq \frac{m}{m c_m-1}\left(\frac{16}{9(m+2)}-c_m\right) (f-d_m)|\nabla^\bot \nu_1|^2.
	\end{align*}Thus, by the three previous computations, we have
	\begin{align*}
	2 \frac{|A^-|^2}{f}\big(|\nabla^{\perp} A|^2-c_m|\nabla^{\perp} H|^2\big)&\geq\left(2-\frac{18(m+2)\left(m c_m-1\right)}{7m-18}\right) \frac{|A^-|^2}{f}\sum_{i,j,k}|\langle\nabla_i^{\perp} \mathring{A}_{j k}, \nu_1\rangle|^2 \\
	&+\frac{2m}{m c_m-1}\left(\frac{16}{9(m+2)}-c_m\right)\frac{|A^-|^2}{f}(f-d_m)|\nabla^{\perp} \nu_1|^2\\
	&= \left(2-\frac{18(m+2)\left(m c_m-1\right)}{7m-18}\right) \frac{|A^-|^2}{f}\sum_{i,j,k}|\langle\nabla_i^{\perp} \mathring{A}_{j k}, \nu_1\rangle|^2 \\
	&+\frac{2m}{m c_m-1}\left(\frac{16}{9(m+2)}-c_m\right)|A^-|^2|\nabla^{\perp} \nu_1|^2\\
	&+\frac{2m}{m c_m-1}\left(c_m-\frac{16}{9(m+2)}\right)\frac{|A^-|^2}{f}d_m|\nabla^{\perp} \nu_1|^2,
	\end{align*}which is the estimate of the lemma.
\end{proof}
\begin{lemma}[Upper bound for gradient term of $\left(\partial_t-\Delta\right)|A^-|^2$ ]\label{74.8} \ \newline
If $\frac{1}{m}<c_m \leq \frac{1}{m-\tilde{k}}$, then
	\begin{align*}
	4 \sum_{i,j,k}Q_{i j k}\langle A^-_{i j}, \nabla_k^{\perp} \nu_1\rangle&\leq 2 |\langle\nabla^{\perp} A^-, \nu_1\rangle|^2+\left(2 a_2+2 a_3\frac{9\tilde{k}(m+2)}{(7m-18)(m-\tilde{k})}\right) \frac{|A^-|^2}{f}|\langle\nabla^{\perp} \mathring{A}, \nu_1\rangle|^2 \nonumber\\
	&+2|A^-|^2|\nabla^{\perp} \nu_1|^2+\frac{2}{a_2} f|\nabla^{\perp} \nu_1|^2+\frac{2}{a_3}|\mathring{h}|^2|\nabla^{\perp} \nu_1|^2.
	\end{align*}
\end{lemma}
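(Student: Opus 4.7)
The plan is to expand $4\sum_{i,j,k}Q_{ijk}\langle A^-_{ij},\nabla_k^\perp\nu_1\rangle$ into the three pieces naturally suggested by the definition $Q_{ijk}=\langle\nabla_k^\perp\mathring{A}_{ij},\nu_1\rangle-\langle\nabla_k^\perp A^-_{ij},\nu_1\rangle-|H|^{-1}\mathring{h}_{ij}\nabla_k|H|$, and to estimate each piece separately by Young's inequality. A pointwise Cauchy--Schwarz bound $\sum_{i,j,k}\langle A^-_{ij},\nabla_k^\perp\nu_1\rangle^2\leq|A^-|^2|\nabla^\perp\nu_1|^2$ will be used repeatedly to contract the $(i,j)$ indices against the normal vector field $\nu_1$.

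The piece coming from $-4\sum\langle\nabla_k^\perp A^-_{ij},\nu_1\rangle\langle A^-_{ij},\nabla_k^\perp\nu_1\rangle$ is straightforward: Young's inequality with weight one together with the Cauchy--Schwarz estimate above give the contribution $2|\langle\nabla^\perp A^-,\nu_1\rangle|^2+2|A^-|^2|\nabla^\perp\nu_1|^2$. For the piece $4\sum\langle\nabla_k^\perp\mathring{A}_{ij},\nu_1\rangle\langle A^-_{ij},\nabla_k^\perp\nu_1\rangle$, I would apply Young's pointwise with weight $a_2|A^-|^2/f$ (at points where $|A^-|\ne0$; the piece vanishes where $A^-=0$), and after summing and invoking Cauchy--Schwarz a second time obtain the pair $2a_2\frac{|A^-|^2}{f}|\langle\nabla^\perp\mathring{A},\nu_1\rangle|^2+\frac{2f}{a_2}|\nabla^\perp\nu_1|^2$.

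The hard piece is the third one, namely $-4\sum|H|^{-1}\mathring{h}_{ij}\nabla_k|H|\langle A^-_{ij},\nabla_k^\perp\nu_1\rangle$. Cauchy--Schwarz bounds its absolute value by $4|H|^{-1}|\mathring{h}||\nabla|H|||A^-||\nabla^\perp\nu_1|$; splitting this product as $4\bigl(|\mathring{h}||\nabla^\perp\nu_1|\bigr)\bigl(|A^-||\nabla|H||/|H|\bigr)$ and applying Young's with parameter $a_3$ yields $\frac{2}{a_3}|\mathring{h}|^2|\nabla^\perp\nu_1|^2+2a_3|A^-|^2|\nabla|H||^2/|H|^2$. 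To convert the second summand into the required form, I combine the Kato-type inequality \eqref{eq4.21naff}, which gives $|\nabla|H||^2\leq\frac{9m(m+2)}{7m-18}|\langle\nabla^\perp\mathring{A},\nu_1\rangle|^2$, with the pinching identity \eqref{7eq4.11}, rewritten as $|H|^2=\frac{m}{mc_m-1}\bigl(|A^-|^2+|\mathring{h}|^2+f-d_m\bigr)$. Using $\frac{mc_m-1}{m}=\frac{\tilde{k}}{m(m-\tilde{k})}$ and the resulting identity $\frac{9(m+2)(mc_m-1)}{7m-18}=\frac{9\tilde{k}(m+2)}{(7m-18)(m-\tilde{k})}$ then produces exactly the coefficient stated in the lemma.

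The main obstacle is this final substitution: the bound $|H|^{-2}\leq(mc_m-1)/(mf)$ is immediate only when $|A^-|^2+|\mathring{h}|^2\geq d_m$, and requires care in the complementary regime. In that regime $|A^-|$ and $|\mathring{h}|$ are uniformly bounded in terms of $d_m$, so the offending contribution reduces to a lower-order term of the form $C|A^-|^2|\nabla^\perp\nu_1|^2$ that can be absorbed into the constants $C$ and $C'$ appearing in the ambient proof of the codimension estimate. Assembling the three bounds then yields the claim.
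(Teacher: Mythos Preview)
Your approach is essentially the paper's: both split $Q$ into its three pieces, bound each via Cauchy--Schwarz and Young's inequality with weights $1,a_2,a_3$, and handle the $|H|^{-1}|\mathring{h}||\nabla|H||$ piece by combining the Kato-type estimate \eqref{eq4.21naff} with $f\leq(c_m-\tfrac{1}{m})|H|^2$. The paper simply asserts this last inequality without comment, so your explicit discussion of the regime $|\mathring{A}|^2<d_m$ and its absorption into lower-order terms is, if anything, more careful than the original.
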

\begin{proof}
Using the definition of $Q_{ijk}$, we get
	\begin{align}\label{74.30}
	|Q| \leq|\langle\nabla^{\perp} \mathring{A}, \nu_1\rangle|+|\langle\nabla^{\perp}A^-, \nu_1\rangle|+|H|^{-1}|\mathring{h}||\nabla| H||.
	\end{align}It easily follows from the definition of $f$ that
	\begin{align*}
	f \leq\left(c_m-\frac{1}{m}\right)|H|^2 \le\frac{\tilde{k}}{m(m-\tilde{k})}|H|^2.
	\end{align*}Consequently, using \eqref{eq4.21naff}, we obtain
	\begin{align}\label{74.31}
	\frac{|A^-|^2}{|H|^2}|\nabla| H||^2&\leq \frac{9m(m+2)}{7m-18} \frac{\tilde{k}}{m(m-\tilde{k})} \frac{|A^-|^2}{f}|\langle\nabla^{\perp} \mathring{A}, \nu_1\rangle|^2\nonumber\\
&=\frac{9\tilde{k}(m+2)}{(7m-18)(m-\tilde{k})} \frac{|A^-|^2}{f}|\langle\nabla^{\perp} \mathring{A}, \nu_1\rangle|^2.
	\end{align}Then
	\begin{align*}
	|\langle A^-,\nabla^\bot \nu_1\rangle|^2=\sum_{i,j} \langle A^-_{ij},\nabla^\bot_i \nu_1\rangle^2 \le\sum_{i,j,k}\sum_{\beta\ge 2} (A^\beta_{ij})^2 \langle \nabla^\bot_k \nu_1,\nu_\beta\rangle^2
	\end{align*}and \eqref{74.30} give
	\begin{align*}
	4\sum_{i,j,k} Q_{i j k}\langle A^-_{i j}, \nabla_k^{\perp} \nu_1\rangle&\leq 4|Q||\langle A^-, \nabla^{\perp} \nu_1\rangle| \\
	&\leq 4\left(|\langle\nabla^{\perp}\mathring{A}, \nu_1\rangle|+|\langle\nabla^{\perp} A^-, \nu_1\rangle|+|H|^{-1}|\mathring{h} || \nabla| H||\right)|A^-||\nabla^{\perp} \nu_1|.
	\end{align*}Now to each of these three summed terms above we apply Young's inequality with constants $a_1, a_2, a_3>0$. Specifically, we have
	\begin{align*}
	4|\langle\nabla^{\perp} A^-, \nu_1\rangle||A^-||\nabla^{\perp} \nu_1|&\leq 2 a_1|\langle\nabla^{\perp} A^-, \nu_1\rangle|^2+\frac{2}{a_1}|A^-|^2|\nabla^{\perp} \nu_1|^2, \\
	4|\langle\nabla^{\perp} \mathring{A}, \nu_1\rangle||A^-||\nabla^{\perp} \nu_1|&=4|\langle\nabla^{\perp} \mathring{A}, \nu_1\rangle| \frac{|A^-|}{\sqrt{f}} f^{\frac{1}{2}}|\nabla^{\perp} \nu_1| \\
	&\leq 2 a_2 \frac{|A^-|^2}{f}|\langle\nabla^{\perp} \mathring{A}, \nu_1\rangle|^2+\frac{2}{a_2} f|\nabla^{\perp} \nu_1|^2, \\
	4|H|^{-1}|\mathring{h}||\nabla| H|||A^-||\nabla^{\perp} \nu_1|&\leq 2 a_3 \frac{|A^-|^2}{|H|^2}\left|\nabla| H|\right|^2+\frac{2}{a_3}|\mathring{h}|^2 |\nabla^{\perp} \nu_1|^2 \\
	&\leq 2 a_3\frac{9\tilde{k}(m+2)}{(7m-18)(m-\tilde{k})} \frac{|A^-|^2}{f}|\langle\nabla^{\perp} \mathring{A}, \nu_1\rangle|^2+\frac{2}{a_3}|\mathring{h}|^2|\nabla^{\perp} \nu_1|^2.
	\end{align*}Note we used \eqref{74.31} in the last inequality. Hence
	\begin{align}\label{74.32}
	4 \sum_{i,j,k}Q_{i j k}\langle A^-_{i j}, \nabla_k^{\perp} \nu_1\rangle&\leq 2 a_1|\langle\nabla^{\perp} A^-, \nu_1\rangle|^2+\left(2 a_2+2 a_3\frac{9\tilde{k}(m+2)}{(7m-18)(m-\tilde{k})}\right) \frac{|A^-|^2}{f}|\langle\nabla^{\perp} \mathring{A}, \nu_1\rangle|^2 \nonumber\\
	&+\frac{2}{a_1}|A^-|^2|\nabla^{\perp} \nu_1|^2+\frac{2}{a_2} f|\nabla^{\perp} \nu_1|^2+\frac{2}{a_3}|\mathring{h}|^2|\nabla^{\perp} \nu_1|^2.
	\end{align}
Setting $\alpha_1=1$ and keeping $\alpha_2$ and $\alpha_3$ as they are for now, we get the desired result.
\end{proof}
Finally, putting the conclusions of Lemma \ref{74.6}, \ref{74.7} and \ref{74.8} together, we get the following result.
\begin{lemma}[Gradient term estimate]\label{74.9} \ \newline
Suppose $\frac{1}{m}<c_m \leq \frac{1}{m-\tilde{k}}$, $0<\delta\le\frac{7m^2-43m\tilde{k}-18m-54\tilde{k}}{7m^2-25m\tilde{k}-18m-18\tilde{k}}$ and $\tilde{k}\ge 1$. Then,
	\begin{align*}
	4 \sum_{i,j,k}Q_{i j k}\left\langle A^-_{i j}, \nabla_k^{\perp} \nu_1\right\rangle&\le2|\nabla^{\perp} A^-|^2+2(1-\delta) \frac{|A^-|^2}{f}\left(|\nabla^{\perp} A|^2-c_m|\nabla^{\perp} H|^2\right)\\
	&+\frac{7m-18}{9(m+2)\left(m c_m-1\right)}d_m|\nabla^\bot\nu_1|^2\\
	&-(1-\delta)\frac{2m}{m c_m-1}\left(c_m-\frac{16}{9(m+2)}\right)\frac{|A^-|^2}{f}d_m|\nabla^{\perp} \nu_1|^2.
	\end{align*}\end{lemma}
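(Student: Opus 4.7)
The plan is to combine Lemmas \ref{74.6}, \ref{74.7} and \ref{74.8} through a judicious choice of the free parameters $a_1, a_2, a_3$ appearing in the upper bound from Lemma \ref{74.8} for $4\sum_{i,j,k} Q_{ijk}\langle A^-_{ij}, \nabla_k^\perp \nu_1\rangle$. First I set $a_1 = 1$; the resulting term $2|\langle \nabla^\perp A^-, \nu_1\rangle|^2$ then combines with $2|\hat{\nabla}^\perp A^-|^2$, the quantity bounded from below in Lemma \ref{74.6}, to yield exactly $2|\nabla^\perp A^-|^2$ via the orthogonal decomposition identity $|\nabla^\perp A^-|^2 = |\hat{\nabla}^\perp A^-|^2 + |\langle \nabla^\perp A^-, \nu_1\rangle|^2$ from \eqref{2.24naff}. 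This accounts for the first term on the right-hand side of the claim.

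The remaining four error terms of Lemma \ref{74.8} are absorbed against Lemmas \ref{74.6} and \ref{74.7}. The coefficient $2a_2 + 2a_3 \tfrac{9\tilde{k}(m+2)}{(7m-18)(m-\tilde{k})}$ in front of $\tfrac{|A^-|^2}{f}|\langle \nabla^\perp \mathring{A}, \nu_1\rangle|^2$ is matched against the corresponding term $(1-\delta)\bigl(2 - \tfrac{18(m+2)(mc_m-1)}{7m-18}\bigr)\tfrac{|A^-|^2}{f}|\langle \nabla^\perp \mathring{A}, \nu_1\rangle|^2$ from Lemma \ref{74.7}, while the coefficient $2$ in front of $|A^-|^2|\nabla^\perp \nu_1|^2$ is matched against $(1-\delta)\tfrac{2m}{mc_m-1}\bigl(\tfrac{16}{9(m+2)} - c_m\bigr)|A^-|^2|\nabla^\perp \nu_1|^2$; the latter factor is non-negative since $c_m \leq \tfrac{1}{m-\tilde{k}} \leq \tfrac{16}{9(m+2)}$ under the dimensional hypothesis $m \geq \tfrac{16\tilde{k}+18}{7}$ of Lemma \ref{katoinequality}. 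The residual $\tfrac{2}{a_2} f |\nabla^\perp \nu_1|^2$ and $\tfrac{2}{a_3} |\mathring{h}|^2 |\nabla^\perp \nu_1|^2$ terms are then absorbed by Lemma \ref{74.6}: writing $K := \tfrac{7m-18}{9(m+2)(mc_m-1)}$, that lemma provides
\begin{align*}
Kf|\nabla^\perp \nu_1|^2 + (K-2)|\mathring{h}|^2|\nabla^\perp \nu_1|^2 \leq 2|\hat{\nabla}^\perp A^-|^2 + K d_m |\nabla^\perp \nu_1|^2,
\end{align*}
so one chooses $a_2, a_3$ satisfying $\tfrac{2}{a_2} \leq K$ and $\tfrac{2}{a_3} \leq K - 2$. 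The surplus $K d_m |\nabla^\perp \nu_1|^2$ together with the $d_m$-dependent leftover from Lemma \ref{74.7} assembles into the two explicit $d_m |\nabla^\perp \nu_1|^2$ terms appearing on the right-hand side.

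The main obstacle is the coupled bookkeeping: $(a_2, a_3)$ must be chosen small enough to satisfy $\tfrac{2}{a_2} \leq K$ and $\tfrac{2}{a_3} \leq K - 2$, yet large enough that $2a_2 + 2a_3 \tfrac{9\tilde{k}(m+2)}{(7m-18)(m-\tilde{k})} \leq (1-\delta)\bigl(2 - \tfrac{18(m+2)(mc_m-1)}{7m-18}\bigr)$ and $2 \leq (1-\delta)\tfrac{2m}{mc_m-1}\bigl(\tfrac{16}{9(m+2)} - c_m\bigr)$ both hold. This is a linear feasibility problem in $(a_2, a_3, \delta)$; solving it for the largest admissible $\delta$ produces exactly the threshold $\delta \leq \tfrac{7m^2 - 43m\tilde{k} - 18m - 54\tilde{k}}{7m^2 - 25m\tilde{k} - 18m - 18\tilde{k}}$ stated in the lemma, with the positivity of the denominator $7m^2 - 25m\tilde{k} - 18m - 18\tilde{k}$ encoding the necessary condition $K > 2$ required for Lemma \ref{74.6} to yield a useful bound on $|\mathring{h}|^2 |\nabla^\perp \nu_1|^2$. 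Once admissible parameters are fixed, collecting like terms gives the stated inequality.
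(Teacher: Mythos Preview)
Your proposal is correct and follows essentially the same strategy as the paper: set $a_1=1$, use the orthogonal splitting \eqref{2.24naff} to combine $2|\langle\nabla^\perp A^-,\nu_1\rangle|^2$ with $2|\hat\nabla^\perp A^-|^2$ into $2|\nabla^\perp A^-|^2$, saturate $a_2,a_3$ against the $f|\nabla^\perp\nu_1|^2$ and $|\mathring h|^2|\nabla^\perp\nu_1|^2$ contributions from Lemma~\ref{74.6}, and then read off the $\delta$-threshold from the $\frac{|A^-|^2}{f}|\langle\nabla^\perp\mathring A,\nu_1\rangle|^2$ coefficient match. Two small remarks: you have the monotonicity of $a_2,a_3$ reversed in the last paragraph (the constraints $\tfrac{2}{a_2}\le K$, $\tfrac{2}{a_3}\le K-2$ force $a_2,a_3$ \emph{large}, while the $\langle\nabla^\perp\mathring A,\nu_1\rangle$ match forces them \emph{small}); and for the $|A^-|^2|\nabla^\perp\nu_1|^2$ term you discard the contribution $K|A^-|^2|\nabla^\perp\nu_1|^2$ from Lemma~\ref{74.6} and absorb $2|A^-|^2|\nabla^\perp\nu_1|^2$ using Lemma~\ref{74.7} alone, whereas the paper keeps both and uses $1-\delta\ge\tfrac12$---your route still closes because the resulting constraint $(7m-18)(m-\tilde k)\ge 0$ is automatic, but the paper's version gives the slightly cleaner sufficient condition \eqref{eqk1}.
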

\begin{proof}
Suppose $\frac{1}{m}<c_m \leq \frac{1}{m-\tilde{k}}$ and $0<\delta\le\frac{7m^2-43m\tilde{k}-18m-54\tilde{k}}{7m^2-25m\tilde{k}-18m-18\tilde{k}}$. Expanding $|\nabla^{\perp} A^-|^2$ using
	\begin{align}\label{72.24}
	|\nabla^\bot A^-|^2=|\hat{\nabla}^\bot A^-|^2+|\langle\nabla^\bot A^-,\nu_1\rangle|^2
	\end{align}and using the inequality \eqref{2.23naff} gives us
	\begin{align*}
	2|\nabla^{\perp} A^-|^2&=2|\hat{\nabla}^{\perp} A^-|^2+2|\langle\nabla^{\perp}A^-, \nu_1\rangle|^2 \\
	&\geq 2|\langle\nabla^{\perp} A^-, \nu_1\rangle|^2+\left(\frac{7m-18}{9(m+2)\left(m c_m-1\right)}-2\right)|\mathring{h}|^2|\nabla^{\perp} \nu_1|^2\\
	&+\frac{7m-18}{9(m+2)\left(m c_m-1\right)}(|A^-|^2+f-d_m)|\nabla^{\perp} \nu_1|^2.
	\end{align*}Multiplying the result in Lemma \ref{74.7} by $(1-\delta)$ and using that $1-\delta \geq \frac{1}{2}$ on the coefficient of $|A^-|^2|\nabla^{\perp} \nu_1|^2$ gives
	\begin{align*}
		&2(1-\delta) \frac{|A^-|^2}{f}(|\nabla^{\perp} A|^2-c_m|\nabla^{\perp} H|^2)\\
	&\geq (1-\delta)\left(2-\frac{18(m+2)\left(m c_m-1\right)}{7m-18}\right) \frac{|A^-|^2}{f}\sum_{i,j,k}|\langle\nabla_i^{\perp} \mathring{A}_{j k}, \nu_1\rangle|^2 \\
	&+\frac{m}{m c_m-1}\left(\frac{16}{9(m+2)}-c_m\right)|A^-|^2|\nabla^{\perp} \nu_1|^2\\
	&+(1-\delta)\frac{2m}{m c_m-1}\left(c_m-\frac{16}{9(m+2)}\right)\frac{|A^-|^2}{f}d_m|\nabla^{\perp} \nu_1|^2.
	\end{align*}Putting these together, we get
	\begin{align*}
	2|\nabla^{\perp} A^-|^2&+2(1-\delta) \frac{|A^-|^2}{f}(|\nabla^{\perp} A|^2-c_m|\nabla^{\perp} H|^2) \geq 2|\langle\nabla^{\perp} A^-, \nu_1\rangle|^2\\
	&+\left(\frac{7m-18}{9(m+2)\left(m c_m-1\right)}-2\right)|\mathring{h}|^2|\nabla^{\perp} \nu_1|^2\\
	&+\frac{7m-18}{9(m+2)\left(m c_m-1\right)}(|A^-|^2+f-d_m)|\nabla^{\perp} \nu_1|^2\\
	&+(1-\delta)\left(2-\frac{18(m+2)\left(m c_m-1\right)}{7m-18}\right) \frac{|A^-|^2}{f}\sum_{i,j,k}|\langle\nabla_i^{\perp} \mathring{A}_{j k}, \nu_1\rangle|^2 \\
	&+\frac{m}{m c_m-1}\left(\frac{16}{9(m+2)}-c_m\right)|A^-|^2|\nabla^{\perp} \nu_1|^2\\
	&+(1-\delta)\frac{2m}{m c_m-1}\left(c_m-\frac{16}{9(m+2)}\right)\frac{|A^-|^2}{f}d_m|\nabla^{\perp} \nu_1|^2.
	\end{align*}
On the other hand, the first result of Lemma \ref{74.8} gives us that
	\begin{align*}
	4 \sum_{i,j,k}Q_{i j k}\langle A^-_{i j}, \nabla_k^{\perp} \nu_1\rangle&\leq 2 |\langle\nabla^{\perp} A^-, \nu_1\rangle|^2+\left(2 a_2+2 a_3\frac{9\tilde{k}(m+2)}{(7m-18)(m-\tilde{k})}\right) \frac{|A^-|^2}{f}|\langle\nabla^{\perp} \mathring{A}, \nu_1\rangle|^2 \nonumber\\
	&+2|A^-|^2|\nabla^{\perp} \nu_1|^2+\frac{2}{a_2} f|\nabla^{\perp} \nu_1|^2+\frac{2}{a_3}|\mathring{h}|^2|\nabla^{\perp} \nu_1|^2.
	\end{align*}
Therefore, it only remains to compare the coefficients of like terms in the two inequalities above.
Assuming that $c_m=\frac{1}{m-\tilde{k}}$, we need at least:
\begin{align*}
	&\frac{2}{\alpha_3}=\frac{7m-18}{9(m+2)(mc_m-1)}-2\Longleftrightarrow \alpha_3= \frac{18(m+2)(mc_m-1)}{7m-18-18(m+2)(mc_m-1))},
	\end{align*}
	\begin{align*}
	&\frac{2}{\alpha_2}=\frac{7m-18}{9(m+2)(mc_m-1)}\Longleftrightarrow \alpha_2=\frac{18(m+2)(mc_m-1)}{7m-18}.
	\end{align*}
Using these values for $\alpha_2$ and $\alpha_3$, for the coefficients of $\frac{|A^-|^2}{f}|\langle\nabla^{\perp} \mathring{A}, \nu_1\rangle|^2$, we need
	\begin{align}\label{eqwithmk1}
	&\left(2 a_2+2 a_3\frac{9\tilde{k}(m+2)}{(7m-18)(m-\tilde{k})}\right)\le(1-\delta)\left(2-\frac{18(m+2)\left(m c_m-1\right)}{7m-18}\right)\Longleftrightarrow\nonumber\\
	\delta&\le1-\frac{36(m+2)(mc_m-1)\big( (m-\tilde{k})\big(7m-18-18(m+2)(mc_m-1)\big)+9\tilde{k}(m+2)\big)}{(m-\tilde{k})\big(7m-18-18(m+2)(mc_m-1)\big)\big(14m-36-18(m+2)(mc_m-1)\big)}\nonumber\\
&=\frac{7m^2-43m\tilde{k}-18m-54\tilde{k}}{7m^2-25m\tilde{k}-18m-18\tilde{k}},
	\end{align}
where $0<\delta< 1$ and for different $\tilde{k}$, $m$ is derived accordingly.
Lastly, by comparing the coefficients for the term $|A^-|^2|\nabla^\bot\nu_1|^2$, we have
	\begin{align}\label{eqwithmk2}
	&2\le\frac{m}{mc_m-1}\left(\frac{16}{9(m+2)}-c_m\right)+\frac{7m-18}{9(m+2)(mc_m-1)}\Longleftrightarrow\nonumber\\
	&27m^2c_m+54mc_m-41m-18\le 0\Longleftrightarrow\nonumber\\
	&mc_m(27m+54)\le 41m+18\Longleftrightarrow\nonumber\\
	&\frac{27m+54}{41m +18}\le\frac{1}{mc_m}= 1-\frac{\tilde{k}}{m}\Longleftrightarrow\nonumber\\
	&\tilde{k}\le \frac{m(14m-36)}{41m+18}.
	\end{align}
where for different $\tilde{k}$, $m$ is derived accordingly. Therefore, from \eqref{eqwithmk1} and \eqref{eqwithmk2}, we derive that the dimension $m$ depends on $\tilde{k}$ and we can calculate it explicitely by solving the following system of inequalities
	\begin{align*}
(7m^2-43m\tilde{k}-18m-54\tilde{k})(7m^2-25m\tilde{k}-18m-18\tilde{k})>0
	\end{align*}
and
	\begin{align}\label{eqk1}
\tilde{k}\le \frac{2m(7m-18)}{41m+18}.
	\end{align}
The first inequality becomes
	\begin{align*}
(43m+54)(25m+18)\left(\tilde{k}-\frac{m(7m-18)}{25m+18}\right)\left(\tilde{k}-\frac{m(7m-18)}{43m+54}\right)>0,
	\end{align*}
which means that
	\begin{align}\label{eqk2}
\tilde{k}<\frac{m(7m-18)}{43m+54}.
	\end{align}
From \eqref{eqk1} and \eqref{eqk2}, have that \eqref{eqk2} is the one that holds, which means that for different values of $\tilde{k}$ we get a different domain for $m$, from the equation
	\begin{align*}
7m^2-43m\tilde{k}-18m-54\tilde{k}>0,
	\end{align*}
which means
	\begin{align}\label{domainofm}
m>\frac{43\tilde{k}+18+\sqrt{1849\tilde{k}^2+3060\tilde{k}+324}}{14}.
	\end{align}
All in all, from \eqref{domainofm}, for different values of $\tilde{k}$, where $\tilde{k}\ge 1$, we derive the domain of $m$, for which the inequalities \eqref{eqk1} and \eqref{eqk2} hold at the same time. This completes the proof.
\end{proof}
Let $\delta$ be sufficiently small so that each of our above calculations hold. We begin by splitting off the desired nonpositive term in the evolution equation.
	\begin{align*}
	\left(\partial_t-\Delta\right)\frac{|A^-|^2}{f}&=\frac{1}{f}\left(\partial_t-\Delta\right)|A^-|^2-|A^-|^2\frac{1}{f^2}\left(\partial_t-\Delta\right)f+2\Big\langle\nabla \frac{|A^-|^2}{f},\nabla\log f\Big\rangle\\
	&=2\Big\langle\nabla\frac{|A^-|^2}{f},\nabla\log f\Big\rangle-\delta\frac{|A^-|^2}{f^2}\left(\partial_t-\Delta\right)f\\
	&+\frac{1}{f}\left(\partial_t-\Delta\right)|A^-|^2-(1-\delta)\frac{|A^-|^2}{f^2}\left(\partial_t-\Delta\right)f.
	\end{align*}Using the previous calculations, the sum of the terms at the second line are non positive:
	\begin{align*}
	&\frac{1}{f}\left(\partial_t-\Delta\right)|A^-|^2-(1-\delta)\frac{|A^-|^2}{f^2}\left(\partial_t-\Delta\right)f\\
	&=\frac{1}{f}\Big(2\sum_{i,j,p,q}|\langle A_{ij}^- ,A_{pq}^- \rangle|^2+2|\hat{R}^\bot|^2+2\sum_{i,j}|R_{ij}^\bot (\nu_1)|^2\Big)\\
	&+\frac{1}{f}\Big(4\sum_{i,j,p,q}\bar{R}_{ipjq}\big(\sum_{\alpha\ge 2} A^\alpha_{pq}A^\alpha_{ij}\big)-4\sum_{j,k,p}\bar{R}_{kjkp}\big(\sum_{i,\alpha\ge 2} A^\alpha_{pi}A^\alpha_{ij}\big)+2\sum_{k,\alpha,\beta\ge 2}\bar{R}_{k\alpha k\beta}\big(\sum_{i,j} A^\alpha_{ij}A_{ij}^\beta \big)\Big)\\
	&+\frac{1}{f}\Big(2|H|^{-2}\sum_{i,j,k,\alpha,\beta\ge 2}\bar{R}_{k\alpha k\beta} H^\alpha A^{\beta}_{ij}\langle A_{ij},H\rangle-8\sum_{j,p,\alpha,\beta\ge 2}\bar{R}_{jp\alpha\beta}\big(\sum_iA^\alpha_{ip}A_{ij}^\beta \big)\Big)\\
	&+\frac{1}{f}\Big(2\sum_{i,j}|\bar{R}_{ij}(\nu_1)|^2+4\sum_{i,j,p}\langle\bar{R}_{ij}(\nu_1),\mathring{h}_{ip}A^-_{jp}-\mathring{h}_{jp}A^-_{ip}\rangle \Big)\\
	&+\frac{1}{f}\Big(4\sum_{i,j,k}Q_{ijk}\langle A_{ij}^- ,\nabla^\bot_k \nu_1\rangle-2|\nabla^\bot A^-|^2 -2(1-\delta)\frac{|A^-|^2}{f}\big( |\nabla^\bot A|^2-c_m|\nabla^\bot H|^2 \big)\Big)\\
	&-(1-\delta)\Big(-2\frac{|A^-|^2}{f^2}(c_m\sum_{i,j}|\langle A_{ij},H\rangle|^2-\sum_{i,j,p,q}|\langle A_{ij},A_{pq}\rangle|^2-\sum_{i,j}|R_{ij}^\bot |^2)\\
	&+\frac{|A^-|^2}{f^2}\big(4\sum_{i,j,p,q}\bar{R}_{ipjq}\big(\sum_{\alpha} A^\alpha_{pq}A^\alpha_{ij}\big)-4\sum_{j,k,p}\bar{R}_{kjkp}\big(\sum_{i,\alpha} A^\alpha_{pi}A^\alpha_{ij}\big)+2\sum_{k,\alpha,\beta}\bar{R}_{k\alpha k\beta}\big(\sum_{i,j} A^\alpha_{ij}A_{ij}^\beta \big)\big)\\
	&+\frac{|A^-|^2}{f^2}\big(-2c_m\sum_{k,\alpha,\beta}\bar{R}_{k\alpha k\beta} H^\alpha H^\beta -8\sum_{j,p,\alpha,\beta}\bar{R}_{jp\alpha\beta}\big(\sum_iA^\alpha_{ip}A_{ij}^\beta \big)\big)\Big)\\
	&\le\frac{2}{f}\Big(1+(1-\delta)\frac{|A^-|^2}{f}\Big)\sum_{\alpha,\beta\ge 2}\Big(\sum_{i,j}|\bar{R}_{ij\alpha\beta}|^2+4|\bar{R}_{ij\alpha\beta}||A^-|^2\Big)\\
	&+\frac{2}{f}\Big(1+(1-\delta)\frac{2|A^-|^2}{f}\Big)\sum_{i,j}\Big(|\bar{R}_{ij}(\nu_1)|^2+4|\bar{R}_{ij}(\nu_1)||\mathring{h}||A^-|\Big)\\
	&+2(1-\delta)\frac{|A^-|^2}{f^2}\Big(\frac{1}{mc_m-1}d_m(|A^-|^2+2f)+\frac{mc_m}{mc_m-1}|\mathring{h}|^2d_m \Big)\\
	&+C\frac{|A^-|^2}{f}+\frac{1}{f}\Big(2\sum_{i,j}|\bar{R}_{ij}(\nu_1)|^2+4\sum_{i,j,p}\langle\bar{R}_{ij}(\nu_1),\mathring{h}_{ip}A^-_{jp}-\mathring{h}_{jp}A^-_{ip}\rangle\Big)\\
	&+\frac{1}{f}\Big( (1-\delta)\frac{2m}{m c_m-1}\left(c_m-\frac{16}{9(m+2)}\right)\frac{|A^-|^2}{f}d_m|\nabla^{\perp} \nu_1|^2-\frac{7m-18}{9(m+2)\left(m c_m-1\right)}d_m|\nabla^\bot\nu_1|^2\Big),
	\end{align*}for $C$ constant depending on $m,K_1,K_2$ and $d_m$. Doing the same estimate as before, we can see that
	\begin{align*}
	&\frac{2}{f}\Big(1+(1-\delta)\frac{|A^-|^2}{f}\Big)\sum_{\alpha,\beta\ge 2}\Big(\sum_{i,j}|\bar{R}_{ij\alpha\beta}|^2+4|\bar{R}_{ij\alpha\beta}||A^-|^2\Big)\\
	&+\frac{2}{f}\Big(1+(1-\delta)\frac{2|A^-|^2}{f}\Big)\sum_{i,j}\Big(|\bar{R}_{ij}(\nu_1)|^2+4|\bar{R}_{ij}(\nu_1)||\mathring{h}||A^-|\Big)\\
	&+2(1-\delta)\frac{|A^-|^2}{f^2}\Big(\frac{1}{mc_m-1}d_m(|A^-|^2+2f)+\frac{mc_m}{mc_m-1}|\mathring{h}|^2d_m \Big)\\
	&+C\frac{|A^-|^2}{f}+\frac{1}{f}\Big(2\sum_{i,j}|\bar{R}_{ij}(\nu_1)|^2+4\sum_{i,j,p}\langle\bar{R}_{ij}(\nu_1),\mathring{h}_{ip}A^-_{jp}-\mathring{h}_{jp}A^-_{ip}\rangle\Big)\\
	&+\frac{1}{f}\Big( (1-\delta)\frac{2m}{m c_m-1}\left(c_m-\frac{16}{9(m+2)}\right)\frac{|A^-|^2}{f}d_m|\nabla^{\perp} \nu_1|^2-\frac{7m-18}{9(m+2)\left(m c_m-1\right)}d_m|\nabla^\bot\nu_1|^2\Big)\\
	&=2\Big(1+(1-\delta)\frac{|A^-|^2}{f}\Big)\sum_{\alpha,\beta\ge 2}\Big(\sum_{i,j}\frac{|\bar{R}_{ij\alpha\beta}|^2}{f}+4|\bar{R}_{ij\alpha\beta}|\frac{|A^-|^2}{f}\Big)\\
	&+2\Big(1+(1-\delta)\frac{2|A^-|^2}{f}\Big)\sum_{i,j}\Big(\frac{|\bar{R}_{ij}(\nu_1)|^2}{f}+4|\bar{R}_{ij}(\nu_1)|\frac{|\mathring{h}|}{\sqrt{f}}\frac{|A^-|}{\sqrt{f}}\Big)\\
	&+2(1-\delta)\frac{|A^-|^2}{f}\Big(\frac{1}{mc_m-1}d_m(\frac{|A^-|^2}{f}+2)+\frac{mc_m}{mc_m-1}\frac{|\mathring{h}|^2}{f}d_m \Big)\\
	&+C\frac{|A^-|^2}{f}+\frac{1}{f}\Big(2\sum_{i,j}|\bar{R}_{ij}(\nu_1)|^2+4\sum_{i,j,p}\langle\bar{R}_{ij}(\nu_1),\mathring{h}_{ip}A^-_{jp}-\mathring{h}_{jp}A^-_{ip}\rangle \Big)\\
	&+ (1-\delta)\frac{2m}{m c_m-1}\left(c_m-\frac{16}{9(m+2)}\right)\frac{|A^-|^2}{f}\frac{d_m}{f}|\nabla^{\perp} \nu_1|^2-\frac{7m-18}{9(m+2)\left(m c_m-1\right)}\frac{d_m}{f}|\nabla^\bot\nu_1|^2\\
	&\le C\frac{|A^-|^2}{f}+C'\frac{|A^-|}{\sqrt{f}}+\frac{1}{f}\Big(2\sum_{i,j}|\bar{R}_{ij}(\nu_1)|^2+4\sum_{i,j,p}\langle\bar{R}_{ij}(\nu_1),\mathring{h}_{ip}A^-_{jp}-\mathring{h}_{jp}A^-_{ip}\rangle \Big)+C'',
	\end{align*}where the last term on the last row is bounded from above and $C,C',C''$ constant depending on $m,K_1,K_2$ and $d_m$. Thus, according to our previous calculations, and using Young's inequality we get \eqref{7initialclaim}, which was our initial claim:
	\begin{align*}
	\left(\partial_t-\Delta\right)\frac{|A^-|^2}{f}&\le2\Big\langle\nabla\frac{|A^-|^2}{f},\nabla\log f\Big\rangle-\delta\frac{|A^-|^2}{f^2}\left(\partial_t-\Delta\right)f+C\frac{|A^-|^2}{f}\\
	&+\frac{1}{f}\Big(2\sum_{i,j}|\bar{R}_{ij}(\nu_1)|^2+4\sum_{i,j,p}\langle\bar{R}_{ij}(\nu_1),\mathring{h}_{ip}A^-_{jp}-\mathring{h}_{jp}A^-_{ip}\rangle \Big)+C'.
	\end{align*}Recall $\left(\partial_t-\Delta\right)f$ is non negative at each point in space time.
Now,
	\begin{align*}
	\frac{1}{f}\Big(\partial_t&-\Delta\Big)|A^-|^2-\frac{|A^-|^2}{f^2}\left(\partial_t-\Delta\right)f= \left(\partial_t-\Delta\right)\frac{|A^-|^2}{f}-2\Big\langle\nabla\frac{|A^-|^2}{f},\nabla\log f\Big\rangle\\
	&\le-\delta\frac{|A^-|^2}{f^2}\left(\partial_t-\Delta\right)f+C'\frac{|A^-|^2}{f}\\
	&+\frac{1}{f}\Big(2\sum_{i,j}|\bar{R}_{ij}(\nu_1)|^2+4\sum_{i,j,p}\langle\bar{R}_{ij}(\nu_1),\mathring{h}_{ip}A^-_{jp}-\mathring{h}_{jp}A^-_{ip}\rangle \Big).
	\end{align*}In the following theorem, it is proved that, with the assumption of the quadratic pinching, singularity models for this pinched flow must always be codimension one.
\begin{theorem}[cf.\cite{HNAV}, Theorem 5.12]\label{Th1}
Let $F: \mathcal{M}^m\times[0, T) \rightarrow \mathbb{C}P^n$ be a smooth solution to MCF so that
$F_0(p)=F(p, 0)$ is compact and quadratically pinched.
Then $\forall \e>0, \exists H_0 >0$, such that if $f \geq H_0$, then
	\begin{align*}
	|A^-|^2 \leq \e f+C_{\e}
	\end{align*}$\forall t \in[0, T)$ where $C_\e=C_{\e}(n, m)$.
\end{theorem}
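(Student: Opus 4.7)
The plan is to exploit the favorable evolution inequality
\[
(\partial_t - \Delta)\frac{|A^-|^2}{f} \le 2\Big\langle\nabla \frac{|A^-|^2}{f},\nabla \log f\Big\rangle - \delta \frac{|A^-|^2}{f^2}(\partial_t - \Delta)f + C \frac{|A^-|^2}{f} + C'
\]
derived above in this section, together with the crucial fact---guaranteed by the preservation of pinching in Theorem \ref{ThCPn}---that $(\partial_t - \Delta)f \ge 0$ pointwise. Following the approach of our earlier work \cite{HNAV} (and in the spirit of Huisken--Sinestrari \cite{HuSi09} and Naff \cite{Naff}), the idea is to pass to a power-adjusted quantity in which the $(\partial_t-\Delta)f$ term acquires a favorable sign, and then to run a maximum principle argument.

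I would fix a small parameter $\sigma \in (0,\delta)$ and set $w_\sigma := |A^-|^2/f^{1-\sigma}$. Applying the product rule together with the identity
\[
(\partial_t - \Delta)f^\sigma = \sigma f^{\sigma-1}(\partial_t-\Delta)f + \sigma(1-\sigma)f^{\sigma-2}|\nabla f|^2,
\]
and substituting the displayed inequality above for $(\partial_t-\Delta)\frac{|A^-|^2}{f}$, the gradient terms can be reorganised into divergence form. The outcome is
\[
(\partial_t - \Delta)w_\sigma \le \frac{2(1-\sigma)}{f}\langle\nabla w_\sigma,\nabla f\rangle + (\sigma-\delta)\frac{|A^-|^2}{f^{2-\sigma}}(\partial_t-\Delta)f - \sigma(1-\sigma)\frac{|A^-|^2}{f^{3-\sigma}}|\nabla f|^2 + Cw_\sigma + C'f^\sigma.
\]
The choice $\sigma < \delta$ renders the second term non-positive, and the third term is manifestly non-positive. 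At a spatial maximum of $w_\sigma$ one therefore has $\partial_t w_\sigma \le Cw_\sigma + C'f^\sigma$, and since the preserved pinching provides $|A^-|^2 \le (C_0/\e_1)f$ for dimensional constants $C_0,\e_1$, one also has the a priori comparison $w_\sigma \le (C_0/\e_1)f^\sigma$. A Gr\"onwall-type argument on the finite interval $[0,T)$, exploiting this comparison to control the inhomogeneous term, then yields a uniform bound $w_\sigma \le M$ on $\mathcal{M}_t$ for all $t \in [0,T)$, with $M$ depending only on $m$, the ambient geometry of $\mathbb{C}P^n$, and $\mathcal{M}_0$.

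To convert the bound $|A^-|^2 \le M f^{1-\sigma}$ into the required form I would apply Young's inequality: given $\e > 0$, set $H_0 := (M/\e)^{1/\sigma}$, so that wherever $f \ge H_0$,
\[
|A^-|^2 \le M f^{1-\sigma} = M f\cdot f^{-\sigma} \le M f\cdot H_0^{-\sigma} = \e f,
\]
and the conclusion follows with $C_\e := M\,H_0^{1-\sigma}$. The main obstacle will be the careful bookkeeping of the lower-order terms coming from the ambient curvature of $\mathbb{C}P^n$---in particular those involving $\bar R_{ij}(\nu_1)$, $\bar R_{ij\alpha\beta}$, and the reduced normal curvature $\hat R^\perp$---which must be absorbed via Berger's inequality \eqref{Berger} and Young's inequality into the dimensional constant $C'$, so as not to introduce growth in $f$ that would obstruct the closure of the maximum principle argument.
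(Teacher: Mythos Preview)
Your proposal follows the direct maximum-principle route of Naff \cite{Naff} (which in fact appears, commented out, in the paper's source), but there is a genuine gap: the Gr\"onwall step does not close. After passing to $w_\sigma = |A^-|^2/f^{1-\sigma}$ you obtain, at a spatial maximum, $\partial_t w_\sigma \le C w_\sigma + C' f^\sigma$. The inhomogeneous term $C' f^\sigma$ comes from the ambient-curvature contributions of $\mathbb{C}P^n$ (the constant $C'$ in \eqref{7initialclaim}), and since $f \ge \e_1 |H|^2$ blows up as $t \to T$, so does $f^\sigma$. Your proposed fix---``exploiting the comparison $w_\sigma \le (C_0/\e_1) f^\sigma$''---goes in the wrong direction: it bounds $w_\sigma$ from above by the very quantity that is diverging, and gives no control on $\int_0^T f^\sigma\,dt$. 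You cannot reverse the comparison to get $f^\sigma \lesssim w_\sigma$, because $w_\sigma$ may well be tiny while $f^\sigma$ is huge (that is exactly the regime you are trying to reach). So the argument does not yield a uniform $M$.

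The paper circumvents this by a blow-up/contradiction argument, referring to \cite[Theorem~5.12]{HNAV} and carried out explicitly in the parallel proof of Theorem~\ref{thm_cylindrical}. One first observes that the estimate holds trivially for $\e \ge c_m/\delta$ from the preserved pinching, then supposes the infimal $\e_0$ for which it holds is positive and extracts a sequence $(p_k,t_k)$ with $f_k(p_k,t_k)\to\infty$ realising $\e_0$. Rescaling by $\hat r_k = f_k(p_k,t_k)^{-1/2}$ and using the gradient estimates of Section~4 for compactness, the limit flow lives in Euclidean $\mathbb{R}^{m+k}$ because the ambient curvature of $\mathbb{C}P^n$ scales away. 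In that Euclidean limit the lower-order terms producing your problematic $C'$ vanish identically, the evolution inequality for $|A^-|^2/f$ has no inhomogeneity, and the strong maximum principle on the limit yields the contradiction. The essential point your approach misses is that the ambient terms must be killed by rescaling rather than absorbed by ODE comparison.
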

\begin{proof}
Since $\mathcal{M}$ is quadratically bounded, there exist constants $ C,D$ such that
	\begin{align*}
	|A^-|^2 \leq Cf +D.
	\end{align*}
Therefore, the above estimate holds for all $\e\geq \frac{c_m}{\delta}$. Indeed, from the pinching $|A|^2\leq c_m|H|^2+d_m$, we can make a little bit more space so that
	\begin{align*}
	|A^-|^2+|A^+|^2=|A|^2\le (c_m-\delta)|H|^2+d_m-C_\delta
	\end{align*}
and therefore,
	\begin{align*}
	\delta|H|^2&\le c_m|H|^2-|A|^2+d_m-C_\delta\\
	&\le c_m|H|^2-|A|^2+d_m.
	\end{align*}But since $|A^-|^2\le|A|^2\le c_m|H|^2$, we have $\frac{\delta |A^-|^2}{c_m}\le\frac{\delta|A|^2}{c_m}\le\delta|H|^2$, so
	\begin{align*}
	\frac{\delta}{c_m} |A^-|^2 \le c_m |H|^2+d_m -|A|^2= f,
	\end{align*}
which means that $|A^-|^2\le\frac{c_m}{\delta}f\le \e f+C_\e$. The rest of the proof follows the same way as in Theorem 5.12 in \cite{HNAV}.
\end{proof}

Using the blow up theorem above, case 1) of Theorem \ref{maintheorem} is proved.

\section{Cylindrical Estimates}
In this section, we present estimates that demonstrate an improvement in curvature as we approach a singularity. These estimates play a critical role in the analysis of high curvature regions in geometric flows. In particular, in the high codimension setting, we establish that the quadratic pinching ratio $\frac{|A|^2}{|H|^2}$ approaches the ratio of a cylinder, which is $\frac{1}{m-\tilde{k}+1}$.
\begin{theorem}[cf.\cite{HNAV}, Theorem 6.1, cf.\cite{HuSi09}]\label{thm_cylindrical}
Let $F: \mathcal{M}^m\times[0, T) \rightarrow \mathbb{C}P^n$ be a smooth solution to mean curvature flow so that
$F_0(p)=F(p, 0)$ is compact and quadratically pinched with constant $c_m=\frac{1}{m-\tilde{k}}$, $\tilde{k}\ge 1$.
Then $\forall \e>0, \exists H_1 >0$, such that if $f \geq H_1$, then
	\begin{align*}
	|A|^2- \frac{1}{m-\tilde{k}+1}|H|^2\leq \e f+C_{\e},
	\end{align*}
$\forall t \in[0, T)$, where $C_\e=C_{\e}(n, m)$.
\end{theorem}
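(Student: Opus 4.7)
The plan is to adapt the strategy used for the codimension estimate (Theorem \ref{Th1}) together with the calculations behind Theorem 6.1 of \cite{HNAV}, now applied to the hypersurface-like component of $A$. Recall that $|A|^2 = |h|^2 + |A^-|^2$, where $|h|^2 = |\mathring{h}|^2 + \frac{1}{m}|H|^2$. By the codimension estimate, the contribution of $|A^-|^2$ is already $\le \tfrac{\varepsilon}{2} f + C_{\varepsilon/2}$, so the task reduces to proving a cylindrical pinching for the principal component, namely
\begin{align*}
|h|^2 - \tfrac{1}{m-\tilde{k}+1}|H|^2 \le \tfrac{\varepsilon}{2} f + C'_{\varepsilon},
\end{align*}
equivalently $|\mathring{h}|^2 \le \tfrac{\tilde{k}-1}{m(m-\tilde{k}+1)}|H|^2 + \tfrac{\varepsilon}{2} f + C'_{\varepsilon}$. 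This is the precise cylindrical ratio $S^{m-\tilde{k}+1}\times\mathbb{R}^{\tilde{k}-1}$ in the limiting codimension-one picture.

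First, I would compute $\left(\partial_t - \Delta\right) g$ for $g := |h|^2 - \tfrac{1}{m-\tilde{k}+1}|H|^2$. Using Proposition \ref{eqnof|A^-|^2} combined with the evolution of $|A|^2$ from Lemma \ref{eqn_AHCP} and the evolution of $\tfrac{\sum_{ij}|\langle A_{ij},H\rangle|^2}{|H|^2}$ computed in the previous section, all the higher-order cross terms involving $A^-$ can be absorbed using the codimension estimate, leaving a reaction part dominated by the hypersurface-type algebra
\begin{align*}
2 R_1 - \tfrac{2}{m-\tilde{k}+1}R_2 \le \Bigl(-\tfrac{2}{m-\tilde{k}}\Bigr)|\mathring{h}|^2|H|^2 + \text{good gradient terms} + C|A^-|^2|A|^2 + \text{l.o.t.}
\end{align*}
from Lemma \ref{lemma3.2} applied with $c=\tfrac{1}{m-\tilde{k}+1}$, plus ambient curvature contributions controlled by $|P_\alpha|\le C(|A|^2+1)$ as in Theorem \ref{ThCPn}. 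The gradient terms are handled as in the previous section via the Kato-type inequality Lemma \ref{katoinequality}, giving the strict sign $\tfrac{16}{9(m+2)} - \tfrac{1}{m-\tilde{k}+1} > 0$ under the dimension assumption, so that $-2(|\nabla A|^2 - \tfrac{1}{m-\tilde{k}+1}|\nabla H|^2)$ absorbs the derivative cross terms.

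Second, I would set up the truncation function and iterate. For small $\sigma,\eta>0$, consider $g_{\sigma,\eta} := (g - \eta f)_+ \cdot f^{\sigma - 1}$, and compute $\left(\partial_t - \Delta\right) g_{\sigma,\eta}$ using the quotient rule, as in the proof of Theorem \ref{Th1}. The favorable reaction term from $g$ together with $\eta \left(\partial_t - \Delta\right) f \ge \eta \cdot (\text{non-negative terms})$ produces a differential inequality of the form
\begin{align*}
\left(\partial_t - \Delta\right) g_{\sigma,\eta} \le 2\langle \nabla \log f^{1-\sigma}, \nabla g_{\sigma,\eta}\rangle - \delta \tfrac{g_{\sigma,\eta}}{f}\left(\partial_t - \Delta\right) f + C g_{\sigma,\eta} + C',
\end{align*}
on the set $\{g > \eta f\}$. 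Multiplying by $g_{\sigma,\eta}^{p-1}$, integrating over $\mathcal{M}_t$, and applying the Michael--Simon--Sobolev inequality exactly as in Proposition 6.1 of \cite{HuSi09} and Theorem 6.1 of \cite{HNAV}, one obtains $\|g_{\sigma,\eta}\|_{L^p(\mathcal{M}_t)} \le C(p)$ for $p$ large, and then Stampacchia iteration gives $\sup_{\mathcal{M}_t} g_{\sigma,\eta} \le C$, independent of $t$. This yields $g \le \eta f + C f^{1-\sigma}$, and applying Young's inequality $f^{1-\sigma}\le \tfrac{\varepsilon}{2} f + C_{\varepsilon,\sigma}$ completes the estimate once $\eta$ is chosen smaller than $\tfrac{\varepsilon}{4}$.

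The main obstacle I expect is the Stampacchia iteration step in the Riemannian ambient setting: the lower-order curvature terms arising from $P_\alpha - 2B''$ and from the $\bar{R}$-couplings in Lemmas \ref{74.6}--\ref{74.9} must be shown to be harmless at every stage of the iteration, i.e. they must contribute at most $C f^{1-\sigma}$-type quantities that can be absorbed into the Young's inequality at the end. The codimension estimate Theorem \ref{Th1} is essential here because it ensures that any term involving $|A^-|^2$ is bounded by $\varepsilon f + C_\varepsilon$, so that the intrinsically codimension-one cylindrical estimate closes up; without it, the reaction algebra of Lemma \ref{lemma3.2} would not produce a strictly negative coefficient on $|\mathring{h}|^2|H|^2$.
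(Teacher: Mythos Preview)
Your proposal takes a genuinely different route from the paper's proof. You outline a Stampacchia iteration argument: split off $|A^-|^2$ via the codimension estimate, then control $|h|^2-\tfrac{1}{m-\tilde{k}+1}|H|^2$ by computing its evolution, setting up a truncated function $g_{\sigma,\eta}$, and running Michael--Simon--Sobolev plus iteration as in \cite{HuSi09}. The paper instead runs a blow-up contradiction argument, exactly parallel to its proof of Theorem~\ref{Th1}: assume the infimal $\varepsilon_0$ is positive, take a sequence of points where the ratio $\bigl(|A_k|^2-\tfrac{1}{m-\tilde{k}+1}|H_k|^2\bigr)/f_k$ converges to $\varepsilon_0$ with $f_k\to\infty$, parabolically rescale so $\overline{f}_k(0,0)=1$, use the gradient estimate (Theorem~\ref{thm_gradient}) to extract a smooth limit flow in $\mathbb{R}^{n+m}$, and then apply the strong maximum principle to the evolution of $|A|^2/|H|^2$ on the limit. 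The limit has $\nabla A\equiv 0$ and $|A^-|^2=0$, hence is locally $\mathbb{S}^{m-q}\times\mathbb{R}^q$ with $q\le\tilde{k}-1$, forcing $|A|^2/|H|^2\le\tfrac{1}{m-\tilde{k}+1}$ and contradicting $\varepsilon_0>0$.

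Two points of comparison. First, your assertion that the codimension estimate is ``essential'' is incorrect: the paper explicitly notes that its argument ``doesn't make use of the codimension'' and works verbatim for hypersurfaces. The reduction to codimension one happens automatically in the blow-up limit via the strong maximum principle (the equality case of the reaction inequality forces $|A^-|^2=0$), so there is no need to invoke Theorem~\ref{Th1} as an input. Second, the paper's approach deliberately avoids iteration (this is stated as a goal in the introduction); the trade-off is that the blow-up argument is softer and more geometric but gives no explicit control on $C_\varepsilon$, whereas your iteration route is in principle more quantitative but requires carefully tracking the ambient $\bar R$-terms through every $L^p$ step in $\mathbb{CP}^n$, which you correctly flag as the main obstacle.
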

\begin{proof}
The proof doesn't make use of the codimension. Therefore, the same argument works for hypersurfaces using the pinching condition of the form $|A|^2\le\frac{1}{m-\tilde{k}}|H|^2+2\tilde{k}$. The proof follows closely the proof of Theorem \ref{Th1}. Since $\mathcal{M}$ is quadratically bounded, there exist constants $ C,D$ such that
	\begin{align*}
	|A|^2- \frac{1}{m-\tilde{k}+1}|H|^2 \leq Cf +D.
	\end{align*}
Hence, let $\e_0$ denote the infimum of such $\e$ for which the estimate is true and suppose $\e_0>0$. We will prove the theorem by contradiction. Hence, let us assume that the conclusions of the theorem are not true that is there exists a family of mean curvature flow $\mathcal{M}_t^k$ with points $(p_k,t_k)$ such that
	\begin{align}\label{eqn_limitepsilon0versiontwo}
	\lim_{k\rightarrow \infty} \frac{\left (|A_k(p_k,t_k)|^2- \frac{1}{m-\tilde{k}+1}|H_k(p_k,t_k)|^2\right )}{f_k(p_k,t_k)}= \e_0
	\end{align}
with $\e_0>0$ and $ f_k(p_k,t_k)\rightarrow \infty$.	

We perform a parabolic rescaling of $ \mathcal{M}_t^k $ exactly as in Theorem \ref{Th1}, which is in such a way that $f_k$ at $(p_k,t_k)$ becomes $1$. If we consider the exponential map $\exp_{\bar{p}}\colon T_{\bar{p}}\mathbb{C}P^n \cong \mathbb{R}^{n+m}\to \mathbb{C}P^n$ and $\gamma$ a geodesic, then for a vector $v\in T_{\bar{p}}\mathbb{C}P^n$, we have
	\begin{align*}
	\exp_{\bar{p}}(v)=\gamma_{\bar{p},\frac{v}{|v|}} (|v|), \ \ \gamma '(0)=\frac{v}{|v|} \ \ \text{ and} \ \ \gamma(0)=\bar{p}=F_k(p_k,t_k).
	\end{align*}
That is, if $F_k$ is the parameterisation of the original flow $ \mathcal{M}_t^k $, we let $ \hat r_k = \frac{1}{f_k(p_k,t_k)}$, and we denote the rescaled flow by $ \overline{\mathcal{M}}_t^k $ and we define its parameterisation by
	\begin{align*}
	\overline F_k (p,\tau) = \exp^{-1}_{F_k(p_k,t_k)} \circ F_k (p,\hat{r}^2_k \tau+t_k).
	\end{align*}
For $\hat{r}_k\to 0$, the background Riemannian manifold will converge to its tangent plane in a pointed $C^{d,\gamma}$ H\"older topology \cite{Petersen2016}. Therefore, we can work on the manifold $\mathbb{C}P^n$ as we would work in a Euclidean space. For simplicity, we choose for every flow a local co-ordinate system centred at $ p_k$. In these co-ordinates we can write $0$ instead of $ p_k$. The parabolic neighbourhoods $\mathcal P^k ( p_k, t_k, \hat r_k L, \hat r_k^2 \theta)$ in the original flow becomes $ \overline{\mathcal P}^k(0,0,L,\theta)$. By construction, each rescaled flow satisfies
	\begin{align} \label{eqn_H1version2}
	\overline F_k (0,0) = 0, \quad \overline f_k (0,0) = 1.
	\end{align}
The gradient estimates give us uniform bounds (depending only on the pinching constant) on $ |A_k|$ and its derivatives up to any order on a neighbourhood of the form $\overline{\mathcal P }^k( 0 ,0,d,d)$ for a suitable $ d > 0$. From Theorem \eqref{thm_gradient}, we obtain gradient estimates on the second fundamental form in $ C^\infty $ on $ \overline F_k$. Hence we can apply Arzela-Ascoli (via the Langer-Breuning compactness theorem \cite{Breuning2015} and \cite{Langer1985}) and conclude there exists a subsequence converging in $ C^\infty $ to some limit flow which we denote by $ \widetilde{\mathcal{M}}_\tau^\infty$. 

From \eqref{eqn_limitepsilon0versiontwo} and \eqref{eqn_H1version2}, we see
	\begin{align*}
	\frac{| \widetilde A(0,0)|^2-\frac{1}{m-\tilde{k}+1}|\widetilde H(0,0)|^2}{\widetilde f(0,0)}= \e_0, \quad \widetilde f(0,0) = 1.
	\end{align*}
We claim
	\begin{align*}
	\frac{|\widetilde A( p , \tau) |^2-\frac{1}{m-\tilde{k}+1}|\widetilde H ( p , \tau) |^2}{\widetilde f (p, \tau)}=\lim_{k\rightarrow \infty}\frac{|\overline A_k(p,\tau)|^2-\frac{1}{m-\tilde{k}+1}|\overline H_k(p,\tau)|^2}{\overline f_k (p,\tau)} \leq \e \quad \forall \e>\e_0.
	\end{align*}
 Since $\widetilde{f}(0,0)=1$, it follows that $|\widetilde{f}| \geq \frac{1}{2}$ in $ \widetilde{\mathcal{P}}^\infty(0,0,r,r)$ for some $r < d^\#$.
This is true since any point $( p, \tau) \in \widetilde{\mathcal{M}}^\infty_\tau$ is the limit of points $(p_{j_k},t_{j_k}) \in \overline{\mathcal{M}}^k_t$ and for every $ \e > \e_0 $ if we let $ \eta = \eta(\e,c_n)< d^\#$ then for large $k$, $\mathcal{M}^k_t$ is defined in
	\begin{align*}
	 \mathcal P^k\left(p_{j_k},t_{j_k}, \frac{1}{f_k(p_{j_k},t_{j_k})}\eta,\left(\frac{1}{f_k(p_{j_k},t_{j_k})}\right)^2 \eta\right),
	\end{align*}
which implies
	\begin{align*}
	\frac{|\overline A_k ( p_{j_k} , t_{j_k}) |^2-\frac{1}{m-\tilde{k}+1}|\overline H_k( p_{j_k} , t_{j_k}) |^2}{\overline f_k (p_{j_k}, t_{j_k})} \leq \e \quad \forall \e>\e_0.
	\end{align*}
Hence, the flow $\widetilde{\mathcal{M} }_t^\infty \subset \mathbb R^{n+m}$ has a space-time maximum $\e_0$ for $\frac{|\widetilde A( p , \tau) |^2-\frac{1}{m-\tilde{k}+1}|\widetilde H ( p , \tau) |^2}{\widetilde f (p, \tau)}$ at $ (0,0)$, which implies that the flow $\widetilde{\mathcal M }_t^\infty$ has a space-time maximum $\frac{1}{m-\tilde{k}+1}+\e_0$ for $\frac{|\widetilde A ( p , \tau) |^2}{| \widetilde H (p, \tau) |^2}$ at $ (0,0)$. Since the evolution equation for $ \frac{|A|^2}{|H|^2}$ is given by
	\begin{align*}
	\left(\partial_t-\Delta\right) \frac{|A|^2}{|H|^2}&= \frac{2}{|H|^2}\left\la \nabla |H|^2 , \nabla \left( \frac{|A|^2}{|H|^2}\right) \right\ra-\frac{2}{|H|^2} \left( |\nabla A|^2-\frac{|A|^2}{|H|^2}|\nabla H|^2 \right) \\
	&+\frac{2}{|H|^2}\left( R_1-\frac{|A|^2}{|H|^2} R_2\right)
	\end{align*}
and knowing that
	\begin{align*}
	\frac{3}{n+2}|\nabla H|^2 \leq |\nabla A|^2 \ \ \text{and} \ \ \frac{|A|^2}{|H|^2}\leq c_n,
	\end{align*}
we arrive at
	\begin{align*}
	-\frac{2}{|H|^2} \left( |\nabla A|^2-\frac{|A|^2}{|H|^2}|\nabla H|^2 \right) \leq 0.
	\end{align*}
Furthermore, if $\frac{|A|^2}{|H|^2}=c < c_m$, according to Lemma 2.3 in \cite{HTNsurgery}, we have
	\begin{align*}
	 R_1-\frac{|A|^2}{|H|^2} R_2&= R_1-c R_2\\
	&\leq \frac{2}{m}\frac{1}{c-\nicefrac{1}{m}}| A^-|^2 \mathcal Q+\left(6-\frac{2}{m (c-\nicefrac{1}{m})} \right) |\circo h|^2 | \circo A^-|^2+\left(3-\frac{2}{m (c-\nicefrac{1}{m})} \right)|\circo A^-|^4\\
	&+2\mathcal Q |h|^2\\
	&\leq 0.
	\end{align*}
Hence, the strong maximum principle applies to the evolution equation of $\frac{|A|^2}{|H|^2}$ and shows that $\frac{|A|^2}{|H|^2}$ is constant. The evolution equation then shows $ |\nabla A|^2= 0$, that is the second fundamental form is parallel and that $|A^-|^2 = |\circo A^-|^2=0$, that is the submanifold is codimension one. Finally, this shows locally $ \mathcal{M} = \mathbb S^{m-q}\times \mathbb R^q$, \cite{Lawson1969}. As $\frac{|A|^2}{|H|^2}< c_m\leq \frac{1}{m-\tilde{k}}$, we can only have
	\begin{align*}
	\mathbb S^m, \mathbb S^{m-1}\times \mathbb R, \cdots, \mathbb S^{m-\tilde{k}+1}\times \mathbb R^{\tilde{k}-1},
	\end{align*}
which gives $\frac{|A|^2}{|H|^2}= \frac{1}{m}, \frac{1}{m-\tilde{k}+1},\cdots,\frac{1}{m-\tilde{k}+1}\neq \frac{1}{m-\tilde{k}+1}+\e_0, \e_0>0$, which gives a contradiction.
\end{proof}
\section{Singularity Models of Pinched Solutions of Mean Curvature Flow in Higher Codimension}
In this section, we derive a corollary from Theorem \ref{Th1}, which provides information about the blow up models at the first singular time. Specifically, we show that these models can be classified up to homothety.
\begin{corollary}[{\cite[Corollary 1.4]{Naff6}}]\label{resultnaff} Let $n \geq 5$ and $N>n$. Let $c_n=\frac{1}{n-2}$ if $n \geq 8$ and $c_n=\frac{3(n+1)}{2 n(n+2)}$ if $n=5,6$, or 7 . Consider a closed, n-dimensional solution to the mean curvature flow in $\mathbb{R}^N$ initially satisfying $|H|>0$ and $|A|^2<c_n|H|^2$. At the first singular time, the only possible blow-up limits are codimension one shrinking round spheres, shrinking round cylinders and translating bowl solitons.
\end{corollary}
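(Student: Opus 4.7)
The strategy is to combine the codimension reduction of Theorem \ref{Th1}, the cylindrical estimate of Theorem \ref{thm_cylindrical}, and the gradient estimates of Theorem \ref{thm_gradient} with the classification results for mean convex hypersurface singularity models due to Huisken--Sinestrari, Haslhofer--Kleiner, and Brendle. The point is that although the hypothesis only gives a quadratic pinching $|A|^2 < c_n|H|^2$ in arbitrary codimension, Theorem \ref{Th1} forces the blow-up limit to be effectively codimension one, and then the codimension-one theory takes over.

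First, one performs the standard parabolic rescaling adapted to the type of the singularity. For a Type I singularity, rescale by $\lambda_k = (T-t_k)^{-1/2}$ along a sequence $t_k \to T$; for Type II, use Hamilton's point-picking to select $(p_k, t_k)$ with $|A(p_k,t_k)|^2 (T - t_k) \to \infty$ and rescale by $\lambda_k = |A(p_k, t_k)|$. In either case the ambient $\mathbb{R}^N$ is unchanged, and $f_k(p_k,t_k) \to \infty$ in the rescaled flows. The pointwise gradient estimate $|\nabla A|^2 \le \gamma_1 |A|^4 + \gamma_2$ from Theorem \ref{thm_gradient} together with the higher-order bound \eqref{eqn_HigherOrderGradEstimateA} (and its analogues) give uniform $C^\infty$ control on compact parabolic cylinders around the chosen points, so the Langer--Breuning compactness theorem extracts a smooth limiting ancient flow $\widetilde{\mathcal M}^\infty_\tau$ in $\mathbb{R}^N$.

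Second, I would pass Theorems \ref{Th1} and \ref{thm_cylindrical} to this limit. Since $f_k(p_k,t_k) \to \infty$, Theorem \ref{Th1} applied with $\varepsilon \to 0$ gives $|\widetilde A^-|^2 / \widetilde f \to 0$ pointwise, hence $|\widetilde A^-| \equiv 0$ on $\widetilde{\mathcal M}^\infty$. Therefore $\widetilde{\mathcal M}^\infty$ is contained in an affine $\mathbb{R}^{n+1} \subset \mathbb{R}^N$ and is a genuine codimension-one ancient solution. Similarly, Theorem \ref{thm_cylindrical} yields $|\widetilde A|^2 \le \tfrac{1}{n-\tilde k+1}|\widetilde H|^2$ on the limit, while the preserved pinching $|A|^2 < c_n|H|^2$ survives as $|\widetilde A|^2 \le c_n|\widetilde H|^2$. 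Because $|\widetilde H| > 0$, this limit is a smooth mean convex (in fact two-convex, in the codimension-one sense) ancient hypersurface flow in $\mathbb{R}^{n+1}$.

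Third, the classification divides into two cases. In the Type I case, Huisken's monotonicity formula identifies the limit with a self-shrinker, and Huisken's rigidity theorem for mean convex shrinkers under the pinching $|A|^2 \le c_n|H|^2$ forces $\widetilde{\mathcal M}^\infty$ to be a round sphere $\mathbb{S}^n$ or a round cylinder $\mathbb{S}^{n-j} \times \mathbb{R}^j$, the dimension of the cylinder factor being constrained by the cylindrical ratio $\tfrac{1}{n-\tilde k+1}$. In the Type II case, Hamilton's Harnack inequality combined with the noncollapsing of two-convex MCF (Andrews, Haslhofer--Kleiner) and Haslhofer--Kleiner's classification of ancient two-convex noncollapsed flows produces a convex translating soliton, which by Brendle's uniqueness theorem must be the rotationally symmetric bowl. \textbf{The main obstacle} is establishing strict convexity and uniform two-convexity on the blow-up limit so that Brendle's rigidity theorem is available: weak convexity follows immediately from the codimension-one reduction plus the preserved pinching and the strong maximum principle applied to the smallest principal curvature, but ruling out an exotic splitting $\widetilde{\mathcal M}^\infty = \Sigma^{n-j} \times \mathbb{R}^j$ with a translating $\Sigma$ requires carefully tracing the quadratic pinching into each factor and invoking the cylindrical estimate once more to exclude low-dimensional degeneracies.
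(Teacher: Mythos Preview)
The paper does not prove this corollary at all: it is quoted verbatim from Naff \cite{Naff6} as a known external result and used as a black box to motivate the subsequent Corollary~7.2 for $\mathbb{C}P^n$. So there is no ``paper's own proof'' to compare against.

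That said, your outline is essentially the argument Naff gives in \cite{Naff6}. A couple of remarks. First, the theorems you cite (Theorem~\ref{Th1}, Theorem~\ref{thm_cylindrical}, Theorem~\ref{thm_gradient}) are, in this paper, stated for ambient space $\mathbb{C}P^n$, whereas the corollary is about $\mathbb{R}^N$; you would want the Euclidean versions from \cite{Naff} and \cite{HNAV}, which are in fact simpler (no lower-order curvature terms). Second, your identification of the ``main obstacle'' is accurate but slightly misplaced: in the Type~II case one does not appeal to Haslhofer--Kleiner noncollapsing for the original flow, since noncollapsing is not available in high codimension. Instead, once the limit is known to be codimension one and weakly convex with $|A|^2 \le c_n|H|^2$ (hence uniformly two-convex), one invokes the Brendle--Choi classification of ancient noncollapsed two-convex flows, and noncollapsing of the \emph{limit} follows from Sheng--Wang/Andrews or from convexity directly. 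The splitting concern you raise is handled by the strong maximum principle: if the limit splits off a line, the cross-section is again an ancient pinched solution of lower dimension, and induction plus the cylindrical estimate rules out anything other than a round cylinder.
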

According to Theorem \ref{Th1} and Theorem \ref{thm_cylindrical}, for $F: \mathcal{M}^m\times[0, T) \rightarrow \mathbb{C}P^{n}$ be a smooth solution to mean curvature flow so that $F_0(p)=F(p, 0)$ is compact and quadratically pinched with $c_m=\frac{1}{m-\tilde{k}}$, $\tilde{k}\ge 1$, then $\forall \e>0, \exists H_0, H_1 >0$, such that if $f \geq \max\{H_0,H_1\}$, then
	\begin{align*}
	|A^-|^2 \leq \e f+C_{\e} \quad \text{and} \quad  |A|^2- \frac{1}{m-\tilde{k}+1}|H|^2\leq \e f+C_{\e},
	\end{align*}
$\forall t \in[0, T)$, where $C_\e=C_{\e}(n, m)$. At the first singular time, the only possible blow-up limits are codimension one shrinking round spheres, shrinking round cylinders, and translating bowl solitons. Therefore, we can classify these blowup limits as follows:
\begin{corollary}[cf.\cite{HNAV}, Corollary 7.2, cf.\cite{HS}, Corollary 4.7]
Let $c_m=\frac{1}{m-\tilde{k}}, \tilde{k}\ge 1$. Suppose $F_t\colon\mathcal{M}^m \rightarrow \mathbb{C}P^{n}$ is a smooth solution of the mean curvature flow, compact and quadratically pinched with positive mean curvature on the maximal time interval $[0, T)$.
\begin{enumerate}
\item If the singularity for $t \rightarrow T$ is of type I, the only possible limiting flows under the rescaling procedure as in \cite{HS}, are the homothetically shrinking solutions associated with $\mathbb{S}^m, \mathbb{R}^{\tilde{k}-1} \times \mathbb{S}^{m-\tilde{k}+1}$.
\item If the singularity is of type II, then from Theorem \ref{Th1}, the only possible blow-up limits at the first singular time are codimension one shrinking round spheres, shrinking round cylinders, and translating bowl solitons.
\end{enumerate}
\end{corollary}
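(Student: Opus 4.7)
The plan is to treat the type I and type II cases separately, in both cases using the codimension estimate (Theorem \ref{Th1}) and the cylindrical estimate (Theorem \ref{thm_cylindrical}) together with the rescaling procedures in \cite{HS} to reduce the analysis to known Euclidean classifications.

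For type I singularities, I would start from Huisken's parabolic rescaling $\widetilde F_\lambda(p,s) = \lambda\,\bigl(\exp_{p_0}^{-1}\circ F\bigr)(p,T+\lambda^{-2}s)$ along a sequence $\lambda_j\to\infty$ chosen so that the second fundamental form stays bounded; the assumption that the singularity is type I, combined with the gradient estimate of Theorem \ref{thm_gradient}, yields uniform $C^\infty$ bounds on compact sets. Since $\hat{r}_j := \lambda_j^{-1}\to 0$, the background $\mathbb{C}P^n$ converges in the pointed $C^{d,\gamma}$ topology to its tangent space $\mathbb{R}^{2n}$ at $p_0$, so by the Langer--Breuning compactness theorem we extract a smooth limit flow $\widetilde{\mathcal M}_s^\infty\subset\mathbb{R}^{2n}$ which is a self-shrinker. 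On this limit I would apply Theorem \ref{Th1}: since $f_j\to\infty$ along the blow-up sequence, the ratio $|A^-|^2/f$ tends to zero, forcing $|\widetilde A^-|^2\equiv 0$, so the limit is codimension one. Then the cylindrical estimate of Theorem \ref{thm_cylindrical} applied along the same sequence forces $|\widetilde A|^2\le\frac{1}{m-\tilde k+1}|\widetilde H|^2$. Since the limit is a mean-convex, quadratically pinched self-shrinker of codimension one, Huisken's classification \cite{Hu87} identifies it with a generalised cylinder $\mathbb{R}^{\tilde k-1}\times\mathbb{S}^{m-\tilde k+1}$ (or a round sphere $\mathbb{S}^m$ at the extreme).

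For type II singularities, I would use Hamilton's point-picking procedure as in \cite{HS}: choose $(p_j,t_j)$ nearly maximising $|A|^2(T-t)$, and rescale by $\lambda_j=|A(p_j,t_j)|$, obtaining a sequence of rescaled flows defined on increasingly long backwards time intervals. The gradient estimate of Theorem \ref{thm_gradient} (and the higher-order bounds \eqref{eqn_HigherOrderGradEstimateA}) again give uniform smooth bounds, and the rescaled background metric converges to the Euclidean one on compact sets because $\lambda_j\to\infty$. After passing to a subsequence we get an eternal smooth limit flow $\widetilde{\mathcal M}_\tau^\infty$ in $\mathbb R^{2n}$ with $|\widetilde A|\le 1$ and $|\widetilde A(0,0)|=1$. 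Theorem \ref{Th1} and Theorem \ref{thm_cylindrical}, transferred to the limit exactly as in the type I case, show that the limiting flow is codimension one, mean-convex, and quadratically pinched by $\frac{1}{m-\tilde k+1}$. At this point the limit satisfies the hypotheses of Corollary \ref{resultnaff} (once one verifies $|H|>0$ everywhere on the limit using the preservation of the pinching and a strong maximum principle on $|H|$), and that corollary yields that the only possibilities are codimension-one shrinking round spheres, shrinking round cylinders, or translating bowl solitons.

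The main obstacle I expect is the simultaneous management of three approximations in the blow-up: that the ambient $\mathbb{C}P^n$-curvature disappears in the limit, that the codimension estimate of Theorem \ref{Th1} truly gives $|A^-|=0$ on the limit (and not merely smallness), and that the cylindrical estimate of Theorem \ref{thm_cylindrical} passes to the limit to yield the sharp pinching $\tfrac{1}{m-\tilde k+1}$. The delicate point is to argue that the error terms $C_\varepsilon$ in those estimates are uniformly bounded while the normalising quantity $f$ grows unboundedly along the rescaled sequence, so that after rescaling by $\lambda_j^{-2}$ the inequality $|A^-|^2\le \varepsilon f + C_\varepsilon$ becomes $|\widetilde A^-|^2\le \varepsilon \widetilde f$ in the limit, and then $\varepsilon\to 0$ gives $\widetilde A^-\equiv 0$. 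A secondary technical point is ensuring non-degeneracy of the limit, i.e.\ that the limit flow is not flat at the chosen basepoint, which follows from the normalisation $|\widetilde A(0,0)|=1$ (type II) and from $\widetilde f(0,0)=1$ together with the type I curvature bound.
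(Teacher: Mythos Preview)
Your proposal is correct and follows essentially the same approach the paper takes: the paper does not give a formal proof of this corollary at all, but simply states it as an immediate consequence of Theorem~\ref{Th1}, Theorem~\ref{thm_cylindrical}, and Corollary~\ref{resultnaff}, together with the references \cite{HNAV} and \cite{HS}. Your outline of the rescaling procedures and the passage of the estimates $|A^-|^2\le \varepsilon f + C_\varepsilon$ and $|A|^2-\tfrac{1}{m-\tilde k+1}|H|^2\le \varepsilon f + C_\varepsilon$ to the Euclidean limit is precisely the mechanism underlying the paper's one-line justification.

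One small correction: in the type~I case you cite \cite{Hu87} for the self-shrinker classification, but in this paper \cite{Hu87} is Huisken's paper on hypersurfaces of the sphere, not his 1990 self-shrinker classification. The paper itself sidesteps this by referring to \cite{HS} for the type~I rescaling and classification, so you should do the same.
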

\section{The case of infinite maximal time}
In this section, we assume $T= \infty$. We primarily follow \cite{LaLyNg}. In the following result, the improvement of pinching can be obtained directly from the maximum principle, compared to the case of $T<\infty$.
\begin{proposition}\label{theoreminfinitetime}
Let $F: \mathcal{M}^m\times[0, T) \rightarrow \mathbb{C}P^n$ be a smooth solution to MCF so that $F_0(p)=F(p, 0)$ is compact and quadratically pinched.
Then, there exists a positive constant $\mathcal{C}=\mathcal{C}(m,\tilde{k},d_m)$, depending only on the initial manifold $M_0$, such that
	\begin{align*}
	\frac{|\mathring{A}|^2}{f}\le\mathcal{C}e^{-2d_mt},
	\end{align*}
for any $0\le t<T=\infty$.
\end{proposition}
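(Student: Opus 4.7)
The plan is to derive a differential inequality for $\phi := |\mathring{A}|^2 / f$ and then apply the parabolic maximum principle to the rescaled quantity $\psi := e^{2d_m t}\phi$. The key inputs will be: (i) the preservation of the pinching, which by the results of Section 3 gives $f = d_m + c_m|H|^2 - |A|^2 > 0$ on $[0,T)$; (ii) the Einstein property of $\mathbb{C}P^n$ together with the uniform sectional curvature bound $1 \leq \bar{K} \leq 4$; and (iii) the Kato-type gradient inequality of Lemma \ref{katoinequality}.

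First, I would compute $(\partial_t - \Delta)\phi$ by the quotient rule, using the evolution equations of $|A|^2$ and $|H|^2$ from Lemma \ref{eqn_AHCP} together with $|\mathring{A}|^2 = |A|^2 - \frac{1}{m}|H|^2$. The numerator generates a Simons-type reaction $2(R_1 - \frac{1}{m}R_2)$ plus the surviving ambient-curvature contribution, while the evolution \eqref{evoloff} of the denominator contributes $-2(|\nabla^\perp A|^2 - c_m|\nabla^\perp H|^2)$, which by Lemma \ref{katoinequality} and the bound $c_m \leq \frac{1}{m-\tilde{k}}$ is nonpositive under the dimensional hypotheses of the main theorem. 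The quotient rule produces the transport term $2\langle \nabla \phi, \nabla \log f\rangle$, which vanishes at any interior spacetime maximum of $\phi$, so only the reaction requires careful treatment.

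Next, I would bound the reaction. Using the algebraic identity $\tfrac{m c_m - 1}{m}|H|^2 = |\mathring{A}|^2 + f - d_m$ (cf.\ \eqref{7eq4.11}), together with the pointwise estimate from Lemma \ref{lemma3.2} and the ambient bound $P_\alpha \leq -2(m-3-4k)|\mathring{A}|^2$ established in \eqref{P_aestimate}, I expect the reaction on the right-hand side of $(\partial_t - \Delta)\phi$ to satisfy
\begin{equation*}
(\partial_t - \Delta)\phi \;\leq\; 2\langle \nabla \phi, \nabla \log f\rangle \;-\; 2d_m \,\phi
\end{equation*}
after discarding manifestly nonpositive terms. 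The precise coefficient $-2d_m$ should emerge by substituting the identity above into the Einstein Ricci contribution $2|\mathring{A}|^2\bar r$ and matching coefficients of $\phi$ and $f$; this is the direct analog of the cancellation in Proposition \ref{preservationhypersurface}, where $2d(|A|^2 + \bar r) - 4m|\mathring{A}|^2$ was rewritten as $-\tfrac{4}{c}Z$.

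Finally, the maximum principle applied to $\psi = e^{2d_m t}\phi$ converts the above into $(\partial_t - \Delta)\psi \leq 2\langle \nabla \psi, \nabla \log f\rangle$, so any interior spacetime maximum of $\psi$ forces $\partial_t\psi \leq 0$; hence $\psi$ attains its supremum at $t=0$ and
\begin{equation*}
\phi(\cdot, t) \;\leq\; e^{-2d_m t}\sup_{\mathcal{M}_0}\phi(\cdot,0) \;=\; \mathcal{C}\,e^{-2d_m t},
\end{equation*}
with $\mathcal{C}=\mathcal{C}(m,\tilde{k},d_m,\mathcal{M}_0)$. The main obstacle I anticipate is the bookkeeping in the reaction estimate: all the mixed ambient-curvature terms coming from $P_\alpha$ (especially the $\bar{R}_{jp\alpha\beta}$ contributions, which were controlled only in absolute value in Section 3) must be collected together with the Simons reaction and the pinching identity so as to reproduce \emph{exactly} the coefficient $-2d_m$, rather than a smaller rate with an uncontrolled remainder term that would spoil the exponential decay.
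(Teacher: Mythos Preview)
Your proposal is essentially the same approach as the paper's: compute the evolution of the ratio $|\mathring{A}|^2/f$, control the gradient terms via the Kato inequality of Lemma~\ref{katoinequality}, bound the reaction using Lemma~\ref{lemma3.2} together with $P_\alpha\le -2(m-3-4k)|\mathring{A}|^2$, and then apply the maximum principle to $e^{2d_mt}\phi$. The paper organises the computation slightly differently by introducing $q=\tfrac12|\mathring{A}|^2$ and $Q=-\tfrac12 f$ and working with the logarithmic quotient $\frac{(\partial_t-\Delta)(q/Q)}{q/Q}$, but this is cosmetically the same as your direct quotient rule.

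Two small points where your sketch diverges from what actually happens. First, the coefficient $-2d_m$ does not come from an ``Einstein Ricci contribution $2|\mathring{A}|^2\bar r$'' as in the hypersurface case; rather, writing $(m-3-4k)=\tfrac{m}{\tilde k}d_m$, the $P_\alpha$ bound produces a term $-\tfrac{2m}{\tilde k}d_m$ in the numerator's logarithmic evolution and (after the analogous manipulation) $2(1-\tfrac{m}{\tilde k})d_m$ in the denominator's, and it is their \emph{difference} that yields exactly $-2d_m$. The identity \eqref{7eq4.11} is used, as you say, but to eliminate the $|H|^2$ terms rather than to introduce $\bar r$. Second, the paper treats the cases $H\neq 0$ and $H=0$ separately, since the (B1) decomposition $|\mathring{A}|^2=|\mathring{h}|^2+|A^-|^2$ is only available where $H\neq 0$; both cases lead to the same final inequality, so this is a technicality you should not overlook but which poses no real obstacle.
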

\begin{proof}
Consider the following functions
	\begin{align*}
&Q :=-\frac{f}{2},\\
&q:=\frac{1}{2}(|A|^2-\frac{1}{m}|H|^2),
	\end{align*}
where recall $f:=-|A|^2+c_m|H|^2+d_m$, $\frac{1}{m}\le c_m\le\frac{1}{m-\tilde{k}}$ and $d_m=\frac{\tilde{k}}{m}(m-3-4k), \tilde{k}\ge 1$. In the case where $H\neq 0$, from \eqref{eqn_|H|^2}, \eqref{eqn_|A|^2} and as we did in previous chapters in \eqref{eqn_g} and \eqref{P_aestimate}, the evolution equation of $Q$ becomes
	\begin{align*}
	\left(\partial_t-\Delta\right) Q&=-(|\nabla A|^2-c_m|\nabla H|^2)+R_1-c_m R_2+\frac{1}{2}P_\alpha\nonumber\\
&\le -(|\nabla A|^2-c_m|\nabla H|^2) +\sum_{\alpha,\beta}\big(\sum_{i,j} A^\alpha_{ij} A_{ij}^\beta \big)^2+\sum_{i,j,\alpha,\beta}\Big(\sum_p\big(A^\alpha_{ip}A_{jp}^\beta -A^\alpha_{jp}A_{ip}^\beta \big)\Big)^2\nonumber\\
&-c_m\sum_{i,j}\big(\sum_{\alpha} H^\alpha A^\alpha_{ij}\big)^2-(m-3-4k)|\mathring{A}|^2.
	\end{align*}
At a point where $H \neq 0$, decomposing $A$ into its irreducible components according to \cite{Naff}, \cite{AnBa10}, \cite{LaLyNg} and \cite{Li1992}, we have
	\begin{align*}
& |A|^2=|\mathring{h}|^2+\frac{1}{m} |H|^2+|A^-|^2, \\
&\sum_{i,j}\big(\sum_{\alpha} H^\alpha A^\alpha_{ij}\big)^2=|\mathring{h}|^2 |H|^2+\frac{1}{m} |H|^4, \\
&\sum_{\alpha,\beta}\big(\sum_{i,j} A^\alpha_{ij} A_{ij}^\beta \big)^2+\sum_{i,j,\alpha,\beta}\Big(\sum_p\big(A^\alpha_{ip}A_{jp}^\beta -A^\alpha_{jp}A_{ip}^\beta \big)\Big)^2\le 3|\mathring{h}|^2|A^-|^2\\
&+\frac{3}{2}|A^-|^4+\left(|\mathring{h}|^2+\frac{1}{m}|H|^2\right)|A|^2-\frac{1}{m}|A^-|^2|H|^2.
	\end{align*}
Therefore, we arrive at
	\begin{align*}
	\left(\partial_t-\Delta\right)Q&\le 3|\mathring{h}|^2|A^-|^2+\frac{3}{2}|A^-|^4+\left(|\mathring{h}|^2+\frac{1}{m}|H|^2\right)|A|^2-\frac{1}{m}|A^-|^2|H|^2\\
&-c_m\left(|\mathring{h}|^2+\frac{1}{m}|H|^2\right) |H|^2-(|\nabla A|^2-c_m|\nabla H|^2)-(m-3-4k)|\mathring{A}|^2\\
&=3|\mathring{h}|^2|A^-|^2+\frac{3}{2}|A^-|^4-\frac{1}{m}|A^-|^2|H|^2+\left(|\mathring{h}|^2+\frac{1}{m}|H|^2\right)(2Q+d_m)\\
&-(m-3-4k)(|\mathring{h}|^2+|A^-|^2)-(|\nabla A|^2-c_m|\nabla H|^2)\\
&=\left(3|\mathring{h}|^2+\frac{3}{2}|A^-|^2-\frac{1}{m}|H|^2-d_m\right)|A^-|^2+d_m\left(|\mathring{h}|^2+|A^-|^2+\frac{1}{m}|H|^2\right)\\
&-(m-3-4k)(|\mathring{h}|^2+|A^-|^2)+2Q\left(|\mathring{h}|^2+\frac{1}{m}|H|^2\right)-(|\nabla A|^2-c_m|\nabla H|^2).
	\end{align*}
Substituting $|A|^2=2Q+c_m|H|^2+d_m$, we obtain
	\begin{align*}
d_m\left(|\mathring{h}|^2+|A^-|^2+\frac{1}{m}|H|^2\right)=d_m(2Q+c_m|H|^2+d_m)
	\end{align*}
and
	\begin{align*}
-(m-3-4k)|\mathring{A}|^2&=-(m-3-4k)(|\mathring{h}|^2+|A^-|^2)\\
&=-(m-3-4k)\left(2Q+\left(c_m-\frac{1}{m}\right)|H|^2+d_m\right)
	\end{align*}
and hence
	\begin{align*}
&d_m\left(|\mathring{h}|^2+|A^-|^2+\frac{1}{m} |H|^2\right)  -(m-3-4k)(|\mathring{h}|^2+|A^-|^2) \\
& =Q\left(1-\frac{m}{\tilde{k}}\right)2d_m+|H|^2\left(c_m-\frac{m}{\tilde{k}}\left(c_m-\frac{1}{m}\right)\right)d_m+d_m^2\left(1-\frac{m}{\tilde{k}}\right).
	\end{align*}
Also, writing
	\begin{align*}
\frac{1}{m}|H|^2=\frac{1}{mc_m-1}(|\mathring{h}|^2+|A^-|^2-2Q-d_m),
	\end{align*}
we find
	\begin{align*}
3|\mathring{h}|^2&+\frac{3}{2}|A^-|^2-\frac{1}{m}|H|^2-d_m=3|\mathring{h}|^2+\frac{3}{2}|A^-|^2-\frac{1}{mc_m-1}(|\mathring{h}|^2+|A^-|^2-2Q-d_m)-d_m\\
&=\left(3-\frac{1}{mc_m-1}\right)|\mathring{h}|^2+\left(\frac{3}{2}-\frac{1}{mc_m-1}\right)|A^-|^2+\frac{2}{mc_m-1}Q-\left(1-\frac{1}{mc_m-1}\right)d_m.
	\end{align*}
For $c_m\le\frac{1}{m-\tilde{k}}$, the first term on the right hand side is non positive, since $m\ge4\tilde{k}$, an inequality also used in Proposition 3.1 in \cite{LaLyNg} and the term $-(|\nabla A|^2-c_m|\nabla H|^2)$ becomes $-\left(1-\frac{9(m+2)}{16}c\right)|\nabla A|^2$ for $1-\frac{9(m+2)}{16} c>0$, from Theorem \ref{thm_gradient}. Disregarding these terms and putting things back together we conclude that
	\begin{align*}
\left(\partial_t-\Delta\right)Q&\le\left(\frac{3}{2}-\frac{1}{mc_m-1}\right)|A^-|^4-\left(1-\frac{1}{mc_m-1}\right)d_m|A^-|^2+\left(1-\frac{m}{\tilde{k}}\right)d_m^2\\
&+2Q\left(|\mathring{h}|^2+\frac{1}{mc_m-1}|A^-|^2+\frac{1}{m}|H|^2+d_m\left(1-\frac{m}{\tilde{k}}\right)\right)\\
&+d_m|H|^2\left(c_m-\frac{m}{\tilde{k}}\left(c_m-\frac{1}{m}\right)\right)-\left(1-\frac{9(m+2)}{16}c\right)|\nabla A|^2.
	\end{align*}
The term $d_m|H|^2\left(c_m-\frac{m}{\tilde{k}}\left(c_m-\frac{1}{m}\right)\right)$ is negative. Indeed,
	\begin{align*}
c_m-\frac{m}{\tilde{k}}\left(c_m-\frac{1}{m}\right)&\le\frac{1}{m-\tilde{k}}-\frac{m}{\tilde{k}}\frac{1}{m}-\frac{1}{\tilde{k}}=\frac{-2m+3\tilde{k}}{\tilde{k}(m-\tilde{k})}<0,
	\end{align*}
since $m\ge 4\tilde{k}$. Also, the discriminant of the polynomial
	\begin{align*}
\left(\frac{3}{2}-\frac{1}{mc_m-1}\right)|A^-|^4-\left(1-\frac{1}{mc_m-1}\right)d_m|A^-|^2+\left(1-\frac{m}{\tilde{k}}\right)d_m^2
	\end{align*}
is negative. Indeed,
	\begin{align*}
D&=\left(1-\frac{1}{mc_m-1}\right)^2-4\left(\frac{3}{2}-\frac{1}{mc_m-1}\right)\left(1-\frac{m}{\tilde{k}}\right)\\
&=-5+\frac{2}{mc_m-1}+\frac{1}{(mc_m-1)^2}-\frac{4m}{\tilde{k}(mc_m-1)}\\
&=-\frac{3m^2}{\tilde{k}^2}+\frac{4m}{\tilde{k}}-6\\
&<0,
	\end{align*}
for all $\tilde{k}\ge 1$ and $c_m=\frac{1}{m-\tilde{k}}$, so we can disregard these terms. Therefore, whenever $H\neq 0$,  the evolution equation of $Q$, becomes
	\begin{align*}
\left(\partial_t-\Delta\right)Q&<2Q\left(|\mathring{h}|^2+\frac{1}{mc_m-1}|A^-|^2+\frac{1}{m}|H|^2+d_m\left(1-\frac{m}{\tilde{k}}\right)\right)\\
&-\left(1-\frac{9(m+2)}{16}c\right)|\nabla A|^2,
	\end{align*}
for $1-\frac{9(m+2)}{16} c>0$, from Theorem \ref{thm_gradient}. For the evolution equation of $q$, we have
	\begin{align*}
\left(\partial_t-\Delta\right)q&=\left(3|\mathring{h}|^2+\frac{3}{2}|A^-|^2-\frac{1}{m}|H|^2\right)|A^-|^2-(m-3-4k)(|\mathring{h}|^2+|A^-|^2)\\
&-\left(|\nabla A|^2-\frac{1}{m}|\nabla H|^2\right)+2q\left(|\mathring{h}|^2+\frac{1}{m}|H|^2\right).
	\end{align*}
But $|\mathring{h}|^2+|A^-|^2=|\mathring{A}|^2=2q$, and also
	\begin{align*}
3|\mathring{h}|^2+\frac{3}{2}|A^-|^2-\frac{1}{m}|H|^2&=\left(3-\frac{1}{mc_m-1}\right)|\mathring{h}|^2+\left(\frac{3}{2}-\frac{1}{mc_m-1}\right)|A^-|^2-\frac{1}{m}|H|^2\\
&+\frac{2}{mc_m-1}q.
	\end{align*}
The first three terms of the above equality are non-positive. Also, from the Kato inequality, we arrive at
	\begin{align*}
\left(\partial_t-\Delta\right)q&\le 2q\left(|\mathring{h}|^2+\frac{1}{mc_m-1}|A^-|^2+\frac{1}{m}|H|^2-\frac{m}{\tilde{k}}d_m\right).
	\end{align*}
Finally, for the evolution equation of $\frac{\left(\partial_t-\Delta\right)\frac{q}{Q}}{\frac{q}{Q}}$, we have the following computation
	\begin{align*}
\frac{\left(\partial_t-\Delta\right) \frac{q}{Q}}{\frac{q}{Q}} & =\frac{\left(\partial_t-\Delta\right) q}{q}-\frac{\left(\partial_t-\Delta\right) Q}{Q}+2\left\langle\nabla \log \frac{q}{Q}, \nabla \log Q\right\rangle \\
&\le2\left(|\mathring{h}|^2+\frac{1}{mc_m-1}|A^-|^2+\frac{1}{m}|H|^2-\frac{m}{\tilde{k}}d_m\right)-\left(1-\frac{9(m+2)}{16}c\right)\frac{|\nabla A|^2}{Q}\\
&-2\left(|\mathring{h}|^2+\frac{1}{mc_m-1}|A^-|^2+\frac{1}{m}|H|^2+d_m\left(1-\frac{m}{\tilde{k}}\right)\right)+2\left\langle\nabla \log \frac{q}{Q}, \nabla \log Q\right\rangle\\
&=-2d_m-\left(1-\frac{9(m+2)}{16}c\right)\frac{|\nabla A|^2}{Q}+2\left\langle\nabla \log \frac{q}{Q}, \nabla \log Q\right\rangle.
	\end{align*}
where $1-\frac{9(m+2)}{16}c>0$, wherever $H \neq 0$. In the case of $H=0$, in the same way, we have
 	\begin{align*}
\left(\partial_t-\Delta\right)Q&\le2Q\left(|\mathring{h}|^2+\frac{1}{mc_m-1}|A^-|^2+d_m\left(1-\frac{m}{\tilde{k}}\right)\right)-\left(1-\frac{9(m+2)}{16}c\right)|\nabla A|^2
	\end{align*}
and
	\begin{align*}
\left(\partial_t-\Delta\right)q&\le 2q\left(|\mathring{h}|^2+\frac{1}{mc_m-1}|A^-|^2-\frac{m}{\tilde{k}}d_m\right).
	\end{align*}
Therefore, for the evolution equation of $\frac{\left(\partial_t-\Delta\right)\frac{q}{Q}}{\frac{q}{Q}}$, we have the following computation
	\begin{align*}
\frac{\left(\partial_t-\Delta\right) \frac{q}{Q}}{\frac{q}{Q}} & =\frac{\left(\partial_t-\Delta\right) q}{q}-\frac{\left(\partial_t-\Delta\right) Q}{Q}+2\left\langle\nabla \log \frac{q}{Q}, \nabla \log Q\right\rangle \\
&\le2\left(|\mathring{h}|^2+\frac{1}{mc_m-1}|A^-|^2-\frac{m}{\tilde{k}}d_m\right)-\left(1-\frac{9(m+2)}{16}c\right)\frac{|\nabla A|^2}{Q}\\
&-2\left(|\mathring{h}|^2+\frac{1}{mc_m-1}|A^-|^2+d_m\left(1-\frac{m}{\tilde{k}}\right)\right)+2\left\langle\nabla \log \frac{q}{Q}, \nabla \log Q\right\rangle\\
&=-2d_m-\left(1-\frac{9(m+2)}{16}c\right)\frac{|\nabla A|^2}{Q}+2\left\langle\nabla \log \frac{q}{Q}, \nabla \log Q\right\rangle,
	\end{align*}
which is to the same evolution equation as in the case $H\neq0$. Hence, by the strong maximum principle there exists this constant $\mathcal{C}$ depending upon $\tilde{k},m$ and $d_m$, such that
	\begin{align*}
	\frac{q}{Q}\le\mathcal{C}e^{-2d_mt},
	\end{align*}
which completes the proof.
\end{proof}
Proposition \ref{theoreminfinitetime} implies, that there exists $\tau=\tau(m, \tilde{k})$ such that the inequality
	\begin{align*}
|A|^2-\frac{1}{m-1}|H|^2-\frac{m-3-4k}{m}<0
	\end{align*}
holds for every $t \in(0, T) \cap\left[\tau d_m^{-1}, +\infty\right)$ on any solution initially satisfying \eqref{preservedcondition}. Therefore, for all $\tilde{k}\ge1$, we have 
	\begin{align*}
|A|^2-\frac{1}{m-\tilde{k}}|H|^2-\frac{\tilde{k}}{m}(m-3-4k)<0.
	\end{align*}
If $T>\tau d_m^{-1}$, this means that at time $\tau d_m^{-1}$ the solution satisfies the hypotheses of Theorem 1.1 in \cite{PipSin}. Consequently, the solution either exists forever and converges to a totally geodesic submanifold as $t \rightarrow \infty$, or else contracts to codimension one solution in finite time, from Theorem \ref{Th1}.

\end{document}